\numberwithin{equation}{section}
\theoremstyle{definition}
\newtheorem{theorem}{Theorem}[section]
\newtheorem{corollary}[theorem]{Corollary}
\newtheorem{proposition}[theorem]{Proposition}
\newtheorem{definition}[theorem]{Definition}
\newtheorem{example}[theorem]{Example}
\newtheorem{conjecture}[theorem]{Conjecture}
\newtheorem{notation}[theorem]{Notation}
\newtheorem{remark}[theorem]{Remark}
\newtheorem{lemma}[theorem]{Lemma}
\newtheorem{problem}[theorem]{Problem}
\newcommand\qbin[3]{\left[\begin{matrix} #1 \\ #2 \end{matrix}\right]_{#3}}
\newcommand\bbq[1]{\bm{b}_q(#1)}
\newcommand{\numberset}{\mathbb}
\newcommand{\N}{\numberset{N}}
\newcommand{\Z}{\numberset{Z}}
\newcommand{\F}{\numberset{F}}
\newcommand{\inv}{\textnormal{inv}}
\newcommand{\mC}{\mathcal{C}}
\newcommand{\mF}{\mathcal{F}}
\newcommand{\rk}{\textnormal{rk}}
\newcommand{\mP}{\mathcal{P}}
\newcommand{\mat}{\F_q^{n \times m}}
\newcommand{\supp}{\textnormal{supp}}
\newtheoremstyle{claim}
  {\topsep}
  {\topsep}
  {}
  {}
  {\itshape}
  {}
  {.5em}
  {\underline{\thmname{#1}}\thmnumber{ #2}.\thmnote{ (#3)}}
\theoremstyle{claim}
\newtheorem{claim}{Claim}
\newcommand*{\myproofname}{Proof of the claim}
\newenvironment{clproof}[1][\myproofname]{\begin{proof}[#1]}{\end{proof}}
\title{\textbf{Rook Theory of the Etzion-Silberstein Conjecture}}
\author{Anina Gruica\thanks{A. G. is supported by the Dutch Research Council through grant OCENW.KLEIN.539.} \ and Alberto Ravagnani\thanks{A. R. is supported by the Dutch Research Council through grants VI.Vidi.203.045, 
OCENW.KLEIN.539, 
and by the Royal Academy of Arts and Sciences of the Netherlands.}}
\date{}                    
\affil{Department of Mathematics and Computer Science \\ Eindhoven University of 
	Technology, the Netherlands}
\begin{document}








	\maketitle
	
	\thispagestyle{empty}
	
	\begin{abstract}
		In 2009, Etzion and  Siberstein proposed a conjecture on the largest dimension  of a linear space of matrices over a finite field in which all nonzero matrices are supported
		on a Ferrers diagram and have 
		rank bounded below by a given integer. Although several cases of the conjecture have been established in the past decade, proving or disproving it remains to date a wide open problem. In this paper, we take a new look at the Etzion-Siberstein Conjecture,
		investigating its connection with  rook theory.
		Our results show that the combinatorics behind this open problem is closely linked 
		to the theory of $q$-rook polynomials associated with Ferrers diagrams, as defined by Garsia and  Remmel. 
		In passing, we give a closed formula for the trailing degree of the $q$-rook polynomial associated with a Ferrers diagram in terms of the cardinalities of its diagonals.
		The combinatorial approach taken in this paper 
		 allows us to 
		establish some new instances of the Etzion-Silberstein Conjecture using a non-constructive argument. We also 
		solve the  
		asymptotic version of the conjecture over large finite fields, answering a current open question. 
	\end{abstract}

	\bigskip

\section*{Introduction}

Linear spaces of matrices whose ranks obey various types of  constraints have been extensively investigated in algebra and combinatorics with many approaches and techniques; see~\cite{delsarte1978bilinear,meshulam1985maximal,gelbord2002spaces,seguins2015classification,eisenbud1988vector,lovasz1989singular,draisma2006small,dumas2010subspaces} and the references therein, among many others.
In~\cite{etzion2009error}, Etzion and Silberstein consider
linear spaces of matrices over a finite field $\F_q$ that are supported on a Ferrers diagram~$\mF$ and in which every nonzero matrix has rank bounded below by a certain integer~$d$.
For the application considered in~\cite{etzion2009error}, it is particularly relevant to determine the largest dimension of a linear space having the described properties, which we call an $[\mF,d]_q$-space in the sequel.

In the same paper, Etzion and Silberstein 
prove a bound on the dimension of
any $[\mF,d]_q$-space, which is computed by deleting $d-1$ lines (rows or columns) of the diagram $\mF$ and determining the smallest area that can be obtained in this way; see Theorem~\ref{thm:dimbound} below for a precise statement.
They also conjecture that said bound is sharp for any pair $(\mF,d)$ and any field size $q$, a problem that goes under the name of the \textit{Etzion-Silberstein Conjecture}.

Since 2009, several cases of the conjecture have been settled using various approaches, but proving or disproving it remains to date an open problem.
Most instances of the conjecture that have been proved so far rely on ``case-by-case'' studies,
which divide Ferrers diagrams into classes and design proof techniques that work for a specific class.
The natural consequence of this is the lack of a ``unified'' approach to solving the conjecture, which in turn makes it difficult to understand the potentially very rich combinatorial theory behind~it. One of the goals of this paper is to fill in  this gap.

In~\cite{antrobus2019maximal}, 
Antrobus and Gluesing-Luerssen propose a new research direction and initiate the study of the Etzion-Silberstein Conjecture in the asymptotic regime. More precisely, they investigate 
for which pairs $(\mF,d)$ a randomly chosen space meets the Etzion-Silberstein Bound with high probability over a sufficiently large finite field. 
In the same article, they also answer the question for a class of pairs~$(\mF,d)$ called \textit{MDS-constructible}.  
The question asked by Antrobus and Gluesing-Luerssen generalizes the problem of determining whether or not MRD codes in the rank metric are sparse for large field sizes.

The goal of this paper is to explore the combinatorics behind the Etzion-Silberstein Conjecture, with a particular focus on rook theory and the theory of 
Catalan numbers. 
The approach taken in this paper will also allow us to establish 
the conjecture for some parameter sets using a non-constructive approach, and to answer an open question from~\cite{antrobus2019maximal}.
More in detail, the contribution made by this paper is threefold.

\begin{enumerate}
    \item We study the combinatorics of MDS-constructible pairs, as defined in~\cite{antrobus2019maximal}, showing that a pair $(\mF,d)$ is MDS-constructible precisely when the Etzion-Silberstein Bound of~\cite{etzion2009error} coincides with the trailing degree of the $(d-1)$th $q$-rook polynomial associated with the Ferrers diagram $\mF$. This gives a curious, purely combinatorial characterization of MDS-constructible pairs, which 
    we prove by giving a 
    closed formula for the trailing degree 
    of the $q$-rook polynomial in terms of the diagonals of the underlying Ferrers diagram.
    The latter result does not appear to be combinatorially obvious.
    
    \item We solve the asymptotic analogue of the Etzion-Silberstein Conjecture, determining for which dimensions
    $k$ and for which pairs $(\mF,d)$ the $k$-dimensional $[\mF,d]_q$-spaces are sparse or dense as the field size goes to infinity. This completes the results obtained in~\cite{antrobus2019maximal} 
    by answering an open question from the same paper using a combinatorial approach based on a classical result by Haglund. The idea behind our proof also
    suggests a non-constructive approach to the Etzion-Silberstein Conjecture, which we use to establish it in some new cases.

    \item The theory of MDS-constructible pairs appears to be closely related to that of Catalan numbers. In this paper, we show that these count the MDS-constructible pairs of the form $(\mF,2)$. We also obtain formulas for the MDS-constructible pairs of the form $(\mF,3)$ for when $\mF$ is a square Ferrers diagram.
\end{enumerate}

This paper is organized as follows.
Section~\ref{sec:1} states the Etzion-Silberstein Conjecture and introduces the needed preliminaries. The combinatorics of MDS-constructible pairs and their connection with $q$-rook polynomials is investigated in Section~\ref{sec:2}. We solve the asymptotic version of the Etzion-Silberstein Conjecture in Section~\ref{sec:3} and present the new cases we establish in Section~\ref{sec:4}. Closed formulas for the number of some MDS-constructible pairs are given in Section~\ref{sec:5}, where we also highlight their link with Catalan numbers.

\section{The Etzion-Silberstein Conjecture}
\label{sec:1}

Throughout this paper, $q$ denotes a prime power and $\F_q$ is the finite field with $q$ elements. We let $m$ and $n$ denote positive
integers and $\smash{\mat}$ the space of 
$n \times m$ matrices with entries in~$\F_q$. For an integer $i \in \N$, we let $[i]
=\{1,\dots,i\}$. We start by defining Ferrers diagrams.

\begin{definition}
An $n \times m$ \textbf{Ferrers diagram} is a subset $\mF \subseteq [n] \times [m]$ with the following properties:
\begin{enumerate}
    \item $(1,1) \in \mF$ and $(n,m) \in \mF$;
    \item if $(i,j) \in \mF$ and $j < m$, then $(i,j+1) \in \mF$ \ (right-aligned);
    \item if $(i,j) \in \mF$ and $i >1$, then $(i-1,j) \in \mF$ \ (top-aligned).
\end{enumerate}
We often denote a Ferrers diagram $\mF$ as an array $[c_1, \dots, c_m]$ of positive integers, where for all $1 \le j \le m$ we set $$c_j=|\{(i,j) : 1 \le i \le n, \, (i,j) \in \mF \}|.$$ 
By the definition of Ferrers diagram, we have $1 \le c_1 \le c_2 \le \dots \le c_m=m$. For $1 \le i \le n$, the $i$th \textbf{row} of $\mF$ is the set of $(i,j) \in \mF$ with $j \in [m]$. Analogously, for $1 \le j \le m$, the $j$th \textbf{column} of $\mF$ is the set of $(i,j) \in \mF$ with $i \in [n]$.
\end{definition}

Ferrers diagrams are often represented as 2-dimensional arrays of ``dots'', as 
Figure~\ref{F-F133466} illustrates.

     \begin{figure}[ht]
    \centering
     {\small
     \begin{tikzpicture}[scale=0.35]
         \draw (5.5,2.5) node (b1) [label=center:$\bullet$] {};
         \draw (5.5,3.5) node (b1) [label=center:$\bullet$] {};
         \draw (5.5,4.5) node (b1) [label=center:$\bullet$] {};
         \draw (5.5,5.5) node (b1) [label=center:$\bullet$] {};
         \draw (5.5,6.5) node (b1) [label=center:$\bullet$] {};
         
         \draw (4.5,2.5) node (b1) [label=center:$\bullet$] {};
         \draw (4.5,3.5) node (b1) [label=center:$\bullet$] {};
         \draw (4.5,4.5) node (b1) [label=center:$\bullet$] {};
         \draw (4.5,5.5) node (b1) [label=center:$\bullet$] {};
         \draw (4.5,6.5) node (b1) [label=center:$\bullet$] {};
       \

         \draw (3.5,3.5) node (b1) [label=center:$\bullet$] {};
         \draw (3.5,4.5) node (b1) [label=center:$\bullet$] {};
         \draw (3.5,5.5) node (b1) [label=center:$\bullet$] {};
         \draw (3.5,6.5) node (b1) [label=center:$\bullet$] {};

         \draw (2.5,4.5) node (b1) [label=center:$\bullet$] {};
         \draw (2.5,5.5) node (b1) [label=center:$\bullet$] {};
         \draw (2.5,6.5) node (b1) [label=center:$\bullet$] {};
         
         \draw (1.5,4.5) node (b1) [label=center:$\bullet$] {};
         \draw (1.5,5.5) node (b1) [label=center:$\bullet$] {};
         \draw (1.5,6.5) node (b1) [label=center:$\bullet$] {};

       \draw (0.5,6.5) node (b1) [label=center:$\bullet$] {};
     \end{tikzpicture}
     }
     \caption{The Ferrers diagram $\mF=[1,3,3,4,5,5]$.}
     \label{F-F133466}
     \end{figure}
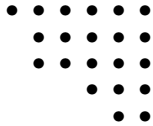

In this paper, we are interested in linear spaces made of matrices that are \textit{supported} on a Ferrers diagram, in the following precise sense.

\begin{definition} \label{defsupp}
The \textbf{support} of a matrix $M \in \mat$ is the index set of its nonzero entries, i.e.,  $\supp(M)=\{(i,j) \mid M_{ij} \neq 0\} \subseteq [n] \times [m]$.
If $\mF$ is an $n \times m$ Ferrers diagram, then we say that
$M \in \mat$ is \textbf{supported} on~$\mF$ if $\supp(M) \subseteq \mF$. We denote by
$\F_q[\mF]$ the $\F_q$-linear space of matrices that are supported on the Ferrers diagram $\mF$.
\end{definition}

Following the notation of Definition~\ref{defsupp}, $\F_q[\mF]$ has dimension $|\mF|$ over $\F_q$.
We study linear spaces of matrices in which all nonzero matrices have rank bounded from below by a given integer and are supported on a Ferrers diagram.

\begin{definition}
Let $\mF$ be an $n \times m$ Ferrers diagram and let $d \ge 1$ be an integer. 
An \textbf{$[\mF,d]_q$-space}
is an $\F_q$-linear subspace 
$\mC \le \F_q[\mF]$
with the property that $\rk(M) \ge d$ for all nonzero
matrices $M \in \mC$. 
\end{definition}

In coding theory, $[\mF,d]_q$-spaces naturally arise
in the construction of large \textit{subspace codes} via the so-called \textit{multilevel construction}; see~\cite{etzion2009error} for the details.
In~\cite[Theorem 1]{etzion2009error}, Etzion and  Silbertein establish an upper bound 
for the dimension of an $[\mF,d]_q$-space.
In order to state the bound, we need to introduce 
the following quantities.

\begin{notation} \label{not:kappa}
Let $\mF=[c_1,\dots, c_m]$ be an $n \times m$ Ferrers diagram and let $1 \le d \le \min\{n,m\}$ be an integer. For $0 \le j \le d-1$, let
$\kappa_j(\mF,d)=\sum_{t=1}^{m-d+1+j} \max\{c_t-j,0\}$. We then set
\begin{align} \label{min_b}
    \kappa(\mF,d)= \min\{\kappa_j(\mF,d) \mid 0 \le j \le d-1\}.
\end{align}
\end{notation}

Note that, by definition,
$\kappa_j(\mF,d)$ is the number of points in the Ferrers diagram $\mF$ after removing the topmost $j$ rows and the rightmost $d-1-j$ columns. 
We can now state the bound proved by Etzion and Silberstein.

\begin{theorem}[see \textnormal{\cite[Theorem 1]{etzion2009error}}] \label{thm:dimbound}
Let $\mF$ be an $n \times m$ Ferrers diagram and let $1 \le d \le \min\{n,m\}$ be an integer. Let $\mC$ be an $[\mF,d]_q$-space.
We have \begin{align*}
    \dim(\mC) \le \kappa(\mF,d).
\end{align*}
\end{theorem}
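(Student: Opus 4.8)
The plan is to exhibit, for each $j$ with $0 \le j \le d-1$, a coordinate projection under which an $[\mF,d]_q$-space must remain injective, so that its dimension is bounded by the number of surviving coordinates, namely $\kappa_j(\mF,d)$; taking the minimum over $j$ then yields $\dim(\mC)\le\kappa(\mF,d)$. First I would fix $j$ and delete the topmost $j$ rows and the rightmost $d-1-j$ columns of $\mF$, obtaining a sub-diagram $\mF'$ with $|\mF'|=\kappa_j(\mF,d)$ (this is exactly the combinatorial content noted after Notation~1.6). Let $\pi_j\colon\F_q[\mF]\to\F_q[\mF']$ be the $\F_q$-linear map that forgets the entries lying in the deleted $j$ rows and $d-1-j$ columns. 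The key claim is that $\pi_j$ restricted to $\mC$ is injective.

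To prove injectivity it suffices to show that every nonzero $M\in\mC$ has $\pi_j(M)\neq 0$, i.e. $M$ has a nonzero entry outside the union of those $j$ rows and $d-1-j$ columns. Suppose not: then $\supp(M)$ is contained in the union of $j$ fixed rows and $d-1-j$ fixed columns, which forms a set of $d-1$ lines covering all nonzero entries of $M$. By the standard fact that a matrix whose support is covered by $r$ lines has rank at most $r$ (each chosen row or column contributes at most rank one, and $M$ is a sum of such rank-one-supported pieces), we get $\rk(M)\le d-1$, contradicting $\rk(M)\ge d$ since $M\neq 0$ and $\mC$ is an $[\mF,d]_q$-space. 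Hence $\pi_j|_{\mC}$ is injective, so
\[
\dim(\mC)\le \dim\F_q[\mF']=|\mF'|=\kappa_j(\mF,d).
\]

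Since this holds for every $j\in\{0,1,\dots,d-1\}$, we conclude $\dim(\mC)\le\min_{0\le j\le d-1}\kappa_j(\mF,d)=\kappa(\mF,d)$, as claimed. The only genuinely substantive point is the rank-covering lemma, i.e. that a support covered by $d-1$ lines forces rank $\le d-1$; everything else is bookkeeping about which rows and columns are removed and verifying that the count of remaining cells of $\mF$ equals $\kappa_j(\mF,d)$ (one must be slightly careful that removing the top $j$ rows turns column height $c_t$ into $\max\{c_t-j,0\}$, and that after also removing the last $d-1-j$ columns only the first $m-(d-1-j)=m-d+1+j$ columns survive, matching the definition of $\kappa_j$). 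The main obstacle is therefore just making the rank–line-cover estimate precise; note it uses only that $\mF$ is right- and top-aligned insofar as the deleted rows/columns are indeed among the rows/columns of the ambient $n\times m$ grid, so the argument is essentially a dimension count via projection plus a classical linear-algebra fact.
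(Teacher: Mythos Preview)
Your argument is correct and is precisely the standard proof of the Etzion--Silberstein bound as given in the cited reference: project onto the cells outside the $j$ top rows and $d-1-j$ rightmost columns, and use that a matrix whose support is covered by $d-1$ lines has rank at most $d-1$ to force injectivity. The paper itself does not reproduce a proof of this theorem (it only cites \cite[Theorem~1]{etzion2009error}), so there is nothing further to compare.
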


We call an
$[\mF,d]_q$-space that
meets the bound of Theorem~\ref{thm:dimbound} with equality \textbf{optimal}.
When $\mF=[n] \times [m]$,
the bound reads as $\dim(\mC)\le \max\{n,m\}(\min\{n,m\}-d+1)$, which is the well-known Singleton-type bound for a rank-metric code established by Delsarte; see~\cite[Theorem 5.4]{delsarte1978bilinear}. Subspaces of $\mat$ meeting the Singleton-type bound with equality are called
\textit{maximum-rank-distance codes} (\textit{MRD codes} in short)
and form a central theme in contemporary coding theory and combinatorics; see \cite{koetter2008coding,gabidulin,SKK,roth1991maximum,delsarte1978bilinear,sheekey2020new,braun2016existence,lewis2020rook,gorla2018rankq,schmidt2020quadratic,csajbok2017maximum} 
among many others.

\begin{example}
Let $\mF=[1,3,3,4,5,5]$ be the Ferrers diagram of Figure~\ref{F-F133466}. Then an
$[\mF,4]_q$-space 
is optimal if its dimension is $7$, where the minimum in~\eqref{min_b} can be attained by deleting the top row and the~2 rightmost columns.
\end{example}

In~\cite{etzion2009error}, 
Etzion and Silberstein conjecture that the bound of Theorem~\ref{thm:dimbound} is sharp for all pairs $(\mF,d)$ and for any field size $q$; see~\cite[Conjecture~1]{etzion2009error}.
The conjecture has been proven in several cases; see for instance~\cite{etzion2009error, etzion2016optimal, gorla2017subspace, silberstein2013new, silberstein2015subspace, trautmann2011new, zhang2019constructions, liu2019constructions, ballico2015linear}.
At the time of writing this paper, it is not known whether or not optimal $[\mF,d]_q$-spaces 
exist for all parameters, i.e., whether the conjecture by Etzion-Silberstein holds.

\begin{conjecture}[Etzion-Silberstein~\cite{etzion2009error}] \label{conj:ES}
For every prime power $q$, every $n \times m$ Ferrers diagram $\mF$, and every integer $1 \le d \le \min\{n,m\}$, there exists an 
$[\mF,d]_q$-space of maximum dimension $\kappa(\mF,d)$.
\end{conjecture}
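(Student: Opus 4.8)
This last statement is the Etzion--Silberstein Conjecture itself, which remains open; what follows is therefore not a complete proof but the line of attack developed in this paper, which settles the conjecture in new cases and fully resolves its asymptotic analogue. The guiding idea is to replace the search for an explicit optimal $[\mF,d]_q$-space by a counting argument, and to control the relevant count through the $q$-rook polynomial attached to $\mF$.

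First, I would recast the existence problem probabilistically. Fix $k=\kappa(\mF,d)$ and let $\mC$ be a uniformly random $k$-dimensional subspace of $\F_q[\mF]$; an optimal space exists as soon as $\mathbb{P}\big[\rk(M)\ge d \text{ for every } 0\neq M\in\mC\big]>0$. A union bound, refined where needed by inclusion--exclusion, controls the complementary event in terms of the number of matrices in $\F_q[\mF]$ of rank at most $d-1$, and here Haglund's identity enters: it expresses the number of matrices supported on $\mF$ of a prescribed rank through the $q$-rook polynomial of $\mF$. Working out the $q\to\infty$ asymptotics of the resulting bound, one sees that a random $k$-dimensional subspace is an $[\mF,d]_q$-space with high probability exactly when $k$ does not exceed the trailing degree of the $(d-1)$th $q$-rook polynomial of $\mF$; this is the asymptotic version of the conjecture.

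Next I would prove the closed formula for that trailing degree in terms of the cardinalities of the diagonals of $\mF$ and compare it with $\kappa(\mF,d)$. When the two quantities coincide --- equivalently, when $(\mF,d)$ is MDS-constructible --- the previous step produces an optimal space for all sufficiently large $q$, and one disposes of the finitely many remaining field sizes using the known MDS/multilevel constructions; hence the conjecture holds for every MDS-constructible pair. When $(\mF,d)$ is not MDS-constructible the trailing degree is strictly below $\kappa(\mF,d)$, so the plain probabilistic count provably falls short over large fields; here I would sharpen it, for instance by conditioning $\mC$ to carry an $\MRD$-type structure on a maximal rectangular sub-board and running inclusion--exclusion only on the remaining entries, or by a column-deletion induction that peels off precisely the diagonal contributions isolated by the trailing-degree formula. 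This is the mechanism behind the new cases established in Section~\ref{sec:4}.

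The crux --- and the reason the conjecture stays open in general --- is exactly the non-MDS-constructible regime: there the extremal spaces, if they exist at all, are necessarily sparse among all $k$-dimensional subspaces as $q\to\infty$, so no argument that is insensitive to the arithmetic of $\F_q$ can exhibit one, and one is forced into genuinely algebraic constructions tailored to each $q$, much as is the case for $\MRD$ codes themselves. Consequently the plan as stated delivers the asymptotic classification, the conjecture for all MDS-constructible pairs, and a further family of non-MDS-constructible cases, but not the conjecture in full generality.
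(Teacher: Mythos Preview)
You correctly identify that this is an open conjecture and that no proof is being claimed; your high-level summary of the rook-theoretic approach (Haglund's identity, the trailing-degree formula, the density dichotomy of Theorem~\ref{thm:sparsedense}) is accurate. However, your account of what the paper actually establishes contains two substantive errors.

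First, you assert that ``the conjecture holds for every MDS-constructible pair,'' disposing of small field sizes via ``the known MDS/multilevel constructions.'' This is backwards and false. The MDS-based construction of Theorem~\ref{construc} requires $q \ge \max_i |D_i \cap \mF|-1$; it does \emph{not} handle small $q$. The probabilistic/density argument (Theorem~\ref{thm:sparsedense} and Corollary~\ref{cor:ans}) also only gives existence for large $q$. Neither mechanism covers the finitely many small primes, and the paper makes no claim that the conjecture is settled for all MDS-constructible pairs over every $q$. Indeed, the point of Section~\ref{sec:4} is precisely that the union bound of Proposition~\ref{prop:existlb} sometimes pushes the threshold on $q$ below what Theorem~\ref{construc} needs --- but not always (see Example~\ref{ex:easyexist}, where $q=2$ remains out of reach).

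Second, you misidentify both the target and the method of Section~\ref{sec:4}. The new cases there are MDS-constructible pairs at field sizes \emph{smaller} than Theorem~\ref{construc} requires (see Table~\ref{table_exist}), not non-MDS-constructible pairs. And the technique is nothing more elaborate than the plain union bound in Proposition~\ref{prop:existlb}; there is no ``conditioning on an $\MRD$-type structure on a sub-board'' or ``column-deletion induction'' in the paper. Your final paragraph is right that the non-MDS-constructible regime is where the probabilistic method provably cannot reach $\kappa(\mF,d)$ for large $q$ (by part~(2) of Theorem~\ref{thm:sparsedense}), but the paper does not attempt to attack that regime at all.
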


Note that Conjecture~\ref{conj:ES} is stated for finite fields only and it is false in general for infinite fields; see~\cite{gorla2017subspace}.

This paper studies some combinatorial problems that are naturally connected with
Conjecture~\ref{conj:ES}, with particular focus on rook theory. In passing, we will show how some instances of the conjecture can be established using a non-constructive approach; see Section~\ref{sec:existence}.


\section{Combinatorics of MDS-Constructible Pairs} \label{sec:2}

There exists a special class of pairs 
$(\mF,d)$ for which the bound of Theorem~\ref{thm:dimbound} can be attained with equality, for $q$ sufficiently large, using \textit{MDS error-correcting codes}; see~\cite{macwilliams1977theory} for the coding theory terminology.
In~\cite{antrobus2019maximal},
these pairs are called \textit{MDS-constructible} for natural reasons.
The construction of 
$[\mF,d]_q$-spaces based on MDS codes can be found in~\cite{gorla2017subspace,etzion2016optimal}, although it dates back to~\cite{roth1991maximum}, where it appears in a slightly different context. 
In order to state the existence result corresponding to this construction, we need the following concept.

\begin{notation} \label{not:diag}
For $1 \le r \le m+n-1$, define the 
\textbf{$r$th diagonal} of the $n \times m$ matrix board as
$$D_r = \{(i,j) \in [n] \times [m] : j-i = m-r\}
\subseteq [n] \times [m].$$
\end{notation}

Note that in Notation~\ref{not:diag} we consider more diagonals than in~\cite[Definition VI.5]{antrobus2019maximal}. This choice will play a crucial role in some of our results. 
We are interested in the number of elements on the diagonals of a Ferrers diagram.

\begin{example}
The elements on the diagonals of $\mF=[1,3,3,4,6,6,6]$ can be seen in Figure~\ref{fig:diag}. We have $|D_i \cap \mF| = i$ for $1 \le i \le 6$, $|D_7 \cap \mF| = 6$, $|D_8 \cap \mF| = 2$, and $|D_i \cap \mF|= 0$ for $9 \le i \le 12$. 
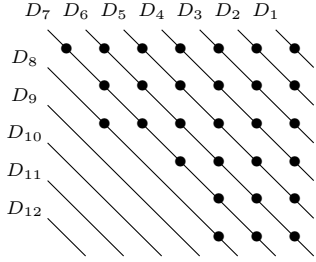
\begin{figure}[ht]
\centering
{\small
     \begin{tikzpicture}[scale=0.5]
    \draw (0,7) to (6,1);
    \draw (1,7) to (7,1);
    \draw (2,7) to (7,2);
    \draw (3,7) to (7,3);
    \draw (4,7) to (7,4);
    \draw (5,7) to (7,5);
    \draw (6,7) to (7,6);
    \draw (0,6) to (5,1);
    \draw (0,5) to (4,1);
    \draw (0,4) to (3,1);
    \draw (0,3) to (2,1);
    \draw (0,2) to (1,1);
    
    \draw (5.75,7.5) node (b1) [label=center:${\scriptstyle D_1}$] {};
    \draw (4.75,7.5) node (b1) [label=center:${\scriptstyle D_2}$] {};
    \draw (3.75,7.5) node (b1) [label=center:${\scriptstyle D_3}$] {};
    \draw (2.75,7.5) node (b1) [label=center:${\scriptstyle D_4}$] {};
    \draw (1.75,7.5) node (b1) [label=center:${\scriptstyle D_5}$] {};
    \draw (0.75,7.5) node (b1) [label=center:${\scriptstyle D_6}$] {};
    \draw (-0.25,7.5) node (b1)
    [label=center:${\scriptstyle D_7}$] {};
    
    \draw (-0.6,6.25) node (b1) [label=center:${\scriptstyle D_8}$] {};
    \draw (-0.6,5.25) node (b1) [label=center:${\scriptstyle D_9}$] {};
    \draw (-0.6,4.25) node (b1) [label=center:${\scriptstyle D_{10}}$] {};
    \draw (-0.6,3.25) node (b1) [label=center:${\scriptstyle D_{11}}$] {};
    \draw (-0.6,2.25) node (b1) [label=center:${\scriptstyle D_{12}}$] {};
    
         \draw (6.5,1.5) node (b1) [label=center:$\bullet$] {};
         \draw (6.5,2.5) node (b1) [label=center:$\bullet$] {};
         \draw (6.5,3.5) node (b1) [label=center:$\bullet$] {};
         \draw (6.5,4.5) node (b1) [label=center:$\bullet$] {};
         \draw (6.5,5.5) node (b1) [label=center:$\bullet$] {};
         \draw (6.5,6.5) node (b1) [label=center:$\bullet$] {};
    
         \draw (5.5,1.5) node (b1) [label=center:$\bullet$] {};
         \draw (5.5,2.5) node (b1) [label=center:$\bullet$] {};
         \draw (5.5,3.5) node (b1) [label=center:$\bullet$] {};
         \draw (5.5,4.5) node (b1) [label=center:$\bullet$] {};
         \draw (5.5,5.5) node (b1) [label=center:$\bullet$] {};
         \draw (5.5,6.5) node (b1) [label=center:$\bullet$] {};
         
         \draw (4.5,1.5) node (b1) [label=center:$\bullet$] {};
         \draw (4.5,2.5) node (b1) [label=center:$\bullet$] {};
         \draw (4.5,3.5) node (b1) [label=center:$\bullet$] {};
         \draw (4.5,4.5) node (b1) [label=center:$\bullet$] {};
         \draw (4.5,5.5) node (b1) [label=center:$\bullet$] {};
         \draw (4.5,6.5) node (b1) [label=center:$\bullet$] {};
       \

         \draw (3.5,3.5) node (b1) [label=center:$\bullet$] {};
         \draw (3.5,4.5) node (b1) [label=center:$\bullet$] {};
         \draw (3.5,5.5) node (b1) [label=center:$\bullet$] {};
         \draw (3.5,6.5) node (b1) [label=center:$\bullet$] {};

         \draw (2.5,4.5) node (b1) [label=center:$\bullet$] {};
         \draw (2.5,5.5) node (b1) [label=center:$\bullet$] {};
         \draw (2.5,6.5) node (b1) [label=center:$\bullet$] {};
         
          \draw (1.5,4.5) node (b1) [label=center:$\bullet$] {};
         \draw (1.5,5.5) node (b1) [label=center:$\bullet$] {};
         \draw (1.5,6.5) node (b1) [label=center:$\bullet$] {};

       \draw (0.5,6.5) node (b1) [label=center:$\bullet$] {};
     \end{tikzpicture}
     }
\caption{Graphical representation of the diagonals and of the Ferrers diagram $\mF=[1,3,3,4,6,6]$ in the $6 \times 7$ matrix board.}
\label{fig:diag}
\end{figure}
\end{example}

The construction of $[\mF,d]_q$-spaces based on MDS error-correcting codes gives the following lower bound on their dimension; the case of algebraically closed fields is treated in~\cite[Section VI]{antrobus2019maximal}.

\begin{theorem}[see \cite{roth1991maximum,gorla2017subspace,etzion2016optimal}] \label{construc}
Let $\mF$ be an $n \times m$ Ferrers diagram with $m \ge n$ and let $1\le d \le n$ be an integer. If 
$\smash{q \ge \max\{|D_i \cap \mF| \, : \, 1 \le i \le m\}-1}$, then there exists an $[\mF,d]_q$-space of dimension $\smash{\sum_{i=1}^m \max\{0, |D_i \cap \mF|-d+1\}}$.
\end{theorem}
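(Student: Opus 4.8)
The plan is to exhibit the classical MDS-code construction explicitly. Write $\delta_r = |D_r \cap \mF|$ for $1 \le r \le m+n-1$. For each $r$ with $1 \le r \le m$ and $\delta_r \ge d$, the hypothesis $q \ge \max\{\delta_i : 1 \le i \le m\} - 1$ gives $\delta_r \le q+1$, so there exists an MDS code $\mathcal{M}_r \le \F_q^{\delta_r}$ of dimension $\delta_r - d + 1$ and minimum Hamming distance $d$ (for instance an extended Reed--Solomon code). Reading the $\delta_r$ points of $D_r \cap \mF$ in order of increasing row index identifies $\F_q^{\delta_r}$ with the space of tuples indexed by $D_r \cap \mF$. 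Now let $\mC$ be the set of all $M \in \F_q[\mF]$ such that $(i)$ $M$ vanishes at every position lying on a diagonal $D_r$ with $r > m$ or with $\delta_r < d$, and $(ii)$ for each $r$ with $1 \le r \le m$ and $\delta_r \ge d$ the tuple of entries of $M$ along $D_r \cap \mF$ lies in $\mathcal{M}_r$. Conditions $(i)$ and $(ii)$ constrain pairwise disjoint sets of matrix positions (distinct diagonals meet $\mF$ in disjoint sets), so $\mC$ is an $\F_q$-subspace of $\F_q[\mF]$ of dimension $\sum_{r=1}^m \max\{0, \delta_r - d + 1\}$, which is the claimed value.

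The remaining task is to show that $\rk(M) \ge d$ for every nonzero $M \in \mC$. First I would let $r^\ast$ be the smallest $r$ for which $M$ has a nonzero entry on $D_r$; by $(i)$ we have $r^\ast \le m$ and $\delta_{r^\ast} \ge d$, and by $(ii)$ the tuple of $M$ along $D_{r^\ast} \cap \mF$ is a nonzero codeword of $\mathcal{M}_{r^\ast}$, hence has at least $d$ nonzero coordinates. Pick $d$ points $(a_1,b_1), \dots, (a_d,b_d) \in D_{r^\ast} \cap \mF$ at which $M$ is nonzero, ordered so that $a_1 < \dots < a_d$; since $b_t - a_t = m - r^\ast$ for every $t$, also $b_1 < \dots < b_d$. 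Consider the $d \times d$ submatrix $S$ of $M$ on rows $a_1, \dots, a_d$ and columns $b_1, \dots, b_d$. Its diagonal entries $M_{a_t,b_t}$ are nonzero, while for $s > t$ we have $b_s - a_t > b_s - a_s = m - r^\ast$, so $M_{a_t,b_s}$ sits on a diagonal $D_r$ with $r = m - (b_s - a_t) < r^\ast$ and is therefore zero by minimality of $r^\ast$. Hence $S$ is lower triangular with nonzero diagonal, so $S$ is invertible and $\rk(M) \ge \rk(S) = d$.

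I do not expect a real obstacle here: everything reduces to the existence of the required extended Reed--Solomon codes (guaranteed by the bound on $q$) and to the triangularity observation that forces the rank up. The one point that needs care is the diagonal bookkeeping: in the convention of Notation~\ref{not:diag} the index $r$ \emph{decreases} as a position moves toward the lower-left corner, and it is exactly this that makes $S$ come out lower --- rather than upper --- triangular, so the directions of the inequalities $r < r^\ast$ and $r > r^\ast$ must be kept straight. Note also that the construction never uses the part of $\mF$ lying on the diagonals $D_{m+1}, \dots, D_{m+n-1}$, which is why the dimension formula ranges only over $1 \le r \le m$; this is consistent with the hypothesis $m \ge n$ and with the smaller diagonal range used in~\cite{antrobus2019maximal}.
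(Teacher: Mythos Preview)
Your proof is correct and follows exactly the construction the paper sketches in Remark~\ref{rem:constr}: place an MDS code on each sufficiently long diagonal and zero out the rest, then use the triangularity argument to certify the rank lower bound (the paper only says ``one can show'' for this last step, so your write-up is in fact more complete). One harmless slip: in your closing comment the index $r$ \emph{increases}, not decreases, as one moves toward the lower-left corner (equivalently, $r$ decreases toward the upper-right), though your actual computation $r=m-(b_s-a_t)<r^\ast$ has the inequality the right way round.
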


A pair $(\mF,d)$ MDS-constructible if the bound of Theorem~\ref{thm:dimbound} is attained with equality, for $q$ large,
via the construction of Theorem~\ref{construc}.

\begin{definition} \label{def:mdsconstr}
Let $\mF$ be an $n \times m$ Ferrers diagram with $m \ge n$ and let $1\le d \le n$ be an integer.
The pair $(\mF,d)$ is \textbf{MDS-constructible} if
\begin{equation} \label{eq-MDSc}
\kappa(\mF,d) = \sum_{i=1}^{m}\max\{0, |D_i \cap \mF|-d+1\}.
\end{equation}
\end{definition}

\begin{remark} \label{rem:constr}
We briefly illustrate the construction used in the proof of Theorem~\ref{construc}, where we follow~\cite[Section IV]{etzion2016optimal} or similarly~\cite[Theorem 32]{gorla2017subspace}.
Let $\mF$ be an $n \times m$ Ferrers diagram with $m \ge n$ and let $1 \le d \le n$ be an integer such that the pair $(\mF,d)$ is MDS-constructible. Let $q \ge \max\{|D_i \cap \mF| : 1 \le i \le m\}-1$. Consider the set $I=\{1 \le i \le m : |D_i \cap \mF| \ge d\}=\{i_1,\dots,i_{\ell}\}$ and for all $i \in I$ let $n_i=|D_i \cap \mF|$. By our assumption on $q$, there exists a linear MDS code $C_i \le \F_q^{n_i}$ of minimum distance $d$. Now for $(x_{i_1}, \dots, x_{i_\ell}) \in C_{i_1} \times \dots \times C_{i_{\ell}}$ we let $M=M(x_{i_1}, \dots, x_{i_\ell}) \in \F_q[\mF]$ be the matrix with the vector $x_{i_j}$ in the positions indexed by $D_{i_j} \cap \mF$ for all $1\le j \le \ell$, and with zeros everywhere else. Let
\begin{align*}
    \mC=\{M(x_{i_1}, \dots, x_{i_\ell}) \, : \, (x_{i_1}, \dots, x_{i_\ell}) \in C_{i_1} \times \dots \times C_{i_{\ell}} \}.
\end{align*}
One can show that $\mC$ is an optimal $[\mF,d]_q$-space of dimension $\sum_{j=1}^{\ell}(n_{i_j}-d+1)$, which in turn establishes Theorem~\ref{construc}.
\end{remark}

Before diving into the rook theory of MDS-constructible pairs, we include a few observations about Definition~\ref{def:mdsconstr} and in particular on the restriction $m \ge n$. 

\begin{remark} \label{rmk:symm} 
\begin{enumerate}
    \item The sum on the RHS of~\eqref{eq-MDSc}
is not symmetric in $n$ and $m$, even though the assumption $m \ge n$ is not restrictive (up a transposition of the Ferrers diagram, if necessary). In particular,
which value between $n$ and $m$ is the largest plays a role, \textit{a priori}, in the definition of an MDS-constructible pair. At the end of this section we will return to this point and show that MDS-constructible pairs admit a characterization that is perfectly symmetric in $n$ and $m$ and that 
has a specific rook theory significance;
see Theorems~\ref{th:trai} and~\ref{prop:newmdsconstr}
below. For that characterization, it is crucial to consider all the $m+n-1$ diagonals
introduced in Notation~\ref{not:diag} (and not only the first~$m$).

\item Definition~\ref{def:mdsconstr} does not reflect ``optimality'' when $d=1$. Indeed, when $d=1$ we have $\kappa(\mF,1)=|\mF|$ for any $n \times m$ Ferrers diagram. In particular, the bound of Theorem~\ref{thm:dimbound} is sharp and attained by the ambient space $\F_q[\mF]$, which often makes the construction described in Remark~\ref{rem:constr} suboptimal.
The definition of MDS-constructible pair that we will propose at the end of this section solves this very minor inconvenience. 
\end{enumerate}
\end{remark}

A natural question is whether MDS-constructible pairs $(\mF,d)$ admit a purely combinatorial characterization in terms of known
structural invariants of a Ferrers diagram. In this section, we will answer the question in the affirmative, proving that MDS-constructible pairs are precisely those for which the 
Etzion-Silberstein Bound of Theorem~\ref{thm:dimbound} takes the same value as the trailing degree of the $(d-1)$th $q$-rook polynomial associated with $\mF$; see Corollary~\ref{cor:main}.
This curious fact does not appear to be have an obvious combinatorial explanation.

The main tool in our approach 
is a closed formula for the trailing degree of a $q$-rook polynomial in terms of the diagonals of the underlying Ferrers diagram; see Theorem~\ref{th:trai}. We start by recalling the needed rook theory terminology.

\begin{definition}
An $n \times m$ \textbf{non-attacking rook placement} is a subset $C \subseteq [n] \times [m]$ with the property that
no two elements of $C$ share the same row or column index. In this context, the elements of $C$ are called \textbf{rooks}.
\end{definition}

In~\cite{GaRe86}, Garsia and  Remmel propose a definition for the $q$-analogue of the rook polynomial associated with a Ferrers diagram. The definition is based on the following quantity.

\begin{notation} \label{not:invrook}
Let $\mF$ be an $n \times m$ Ferrers diagram
and let $C \subseteq \mF$ be an $n \times m$ non-attacking rook placement. We denote by $\inv(C,\mF)$ the number computed as follows: Cross out all the dots from~$\mF$  that either correspond to a rook of $C$,
or are
above or to the right of any rook of $C$; then $\inv(C,\mF)$ is the number of dots
of $\mF$ not crossed out. 
\end{notation}

The $q$-rook polynomials of a Ferrers diagram are defined as follows.

\begin{definition} \label{def_qpoly}
Let $\mF$ be an $n \times m$ Ferrers diagram and let $r \ge 0$ be an integer. The $r$th $q$-rook polynomial of $\mF$ is
$$R_q(\mF,r)= \sum_{C \in \textnormal{NAR}(\mF,r)} q^{\inv(C,\mF)} \, \in \Z[q],$$
where $\textnormal{NAR}(\mF,r)$ denotes the set of $n \times m$ non-attacking rook placements $C \subseteq \mF$ having cardinality~$|C|=r$.
\end{definition}

We also recall that the \textbf{trailing degree} of a polynomial $R=\sum_{i} a_iq^i \in \Z[q]$ is the minimum $i$ with $a_i \neq 0$, where the zero polynomial has trailing degree $-\infty$.
Therefore, following the notation of Definition~\ref{def_qpoly}, the trailing degree of the $r$th $q$-rook polynomial
of~$\mF$ is the minimum value of $\inv(C,\mF)$, as~$C$ ranges over the set
$\textnormal{NAR}(\mF,r)$, whenever the 
$r$th $q$-rook polynomial is nonzero.
Since the trailing degree of the $q$-rook polynomial will play a crucial role in this paper, we introduce a symbol for it.

\begin{notation}
Following the notation of Definition~\ref{def_qpoly}, we denote the trailing degree of the polynomial $R_q(\mF,r)$ by $\tau(\mF,r)$.
\end{notation}

We illustrate the concepts introduced before with an example.

\begin{example}
Consider the $5 \times 5$ Ferrers diagram $\mF=[1,3,3,4,5]$. Figure~\ref{F-F13335} represents a non-attacking rook placement $C \in \textnormal{NAR}(\mF,3)$, where we also illustrate the deletions that compute $\inv(C,\mF)$ according to Notation~\ref{not:invrook}. Note that we have $\inv(C,\mF) = 5$. Moreover, the third $q$-rook polynomial of $\mF$ can be computed as follows:
\begin{align*}
  R_q(\mF,3)= \sum_{C \in \textnormal{NAR}(\mF,3)} q^{\inv(C,\mF)} = 6q^3+ 18q^4 + 27q^5  + 28q^6 + 20q^7 + 11q^8  + 4q^9 + q^{10}.
\end{align*}
Therefore, $\tau(\mF,3)=3$.
     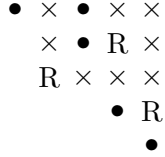
\begin{figure}[ht]
    \centering
     {
     \begin{tikzpicture}[scale=0.45]
         \draw (4.5,2.5) node (b1) [label=center:$\bullet$] {};
         \draw (4.5,3.5) node (b1) [label=center:R] {};
         \draw (4.5,4.5) node (b1) [label=center:$\times$] {};
         \draw (4.5,5.5) node (b1) [label=center:$\times$] {};
         \draw (4.5,6.5) node (b1) [label=center:$\times$] {};
       \
        
         \draw (3.5,3.5) node (b1) [label=center:$\bullet$] {};
         \draw (3.5,4.5) node (b1) [label=center:$\times$] {};
         \draw (3.5,5.5) node (b1) [label=center:R] {};
         \draw (3.5,6.5) node (b1) [label=center:$\times$] {};

         \draw (2.5,4.5) node (b1) [label=center:$\times$] {};
         \draw (2.5,5.5) node (b1) [label=center:$\bullet$] {};
         \draw (2.5,6.5) node (b1) [label=center:$\bullet$] {};
         
         \draw (1.5,4.5) node (b1) [label=center:R] {};
         \draw (1.5,5.5) node (b1) [label=center:$\times$] {};
         \draw (1.5,6.5) node (b1) [label=center:$\times$] {};

       \draw (0.5,6.5) node (b1) [label=center:$\bullet$] {};
     \end{tikzpicture}
     }
     \caption{The non-attacking rook placement $C=\{(2,4), (3,2), (4,5)\}$. The rooks are marked with ``R''. The symbol ``$\times$'' illustrates the cancellations operated to compute $\inv(C,\mF)$.}
     \label{F-F13335}
     \end{figure}
\end{example}

In~\cite[Theorem 1]{haglund}, 
Haglund shows that the theory of $q$-rook polynomials for Ferrers diagrams is closely connected with the problem of enumerating the number of matrices having prescribed rank and $\mF$ as profile.

\begin{notation} \label{notPq}
Let $\mF$ be an $n \times m$ Ferrers diagram and let $r \ge 0$ be an integer. We denote by $P_q(\mF,r)$ the size of the set of matrices
$M \in \F_q[\mF]$ of rank $r$. 
\end{notation}

The next result was established in~\cite{gluesing2020partitions} and it heavily relies on~\cite[Theorem 1]{haglund}.

\begin{theorem}[see~\textnormal{\cite[Proposition 7.11]{gluesing2020partitions}}]
\label{degHag}
Let $\mF$ be an $n \times m$ Ferrers diagram and let $r \ge 0$ be an integer.
Then $P_q(\mF,r)$ is a polynomial in $q$ whose degree satisfies
$$\deg(P_q(\mF,r)) + \tau(\mF,r)= |\mF|.$$
\end{theorem}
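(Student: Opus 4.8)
The plan is to compute $\deg(P_q(\mF,r))$ directly from a column-by-column recursion for $P_q(\mF,r)$, and then to recognize $|\mF|-\deg(P_q(\mF,r))$ as $\tau(\mF,r)$ via a column-by-column evaluation of $\inv$. This reproves, in a streamlined form, the combination of \cite[Theorem~1]{haglund} and \cite[Proposition~7.11]{gluesing2020partitions} quoted above.

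First I would set up the recursion. Write $\mF=[c_1,\dots,c_m]$ and build a matrix $M\in\F_q[\mF]$ by reading its columns from left to right. Since $c_1\le\dots\le c_m$, the span of the first $j$ columns of $M$ lies in the coordinate subspace $\F_q^{c_j}\subseteq\F_q^n$; so if the span of the first $j-1$ columns has dimension $s_{j-1}$, then the $j$th column ranges over a set of size $q^{c_j}$ and either lies in that span ($q^{s_{j-1}}$ choices, keeping the rank) or does not ($q^{c_j}-q^{s_{j-1}}$ choices, raising the rank by exactly one), and in each case the number of choices depends only on $s_{j-1}$ and $c_j$. Hence
\[
P_q(\mF,r)=\sum_{(s_j)}\ \prod_{j:\,s_j=s_{j-1}}q^{s_{j-1}}\ \prod_{j:\,s_j=s_{j-1}+1}\bigl(q^{c_j}-q^{s_{j-1}}\bigr),
\]
the sum ranging over all $0=s_0\le s_1\le\dots\le s_m=r$ with $s_j-s_{j-1}\in\{0,1\}$ and $s_j\le c_j$ (other sequences contribute $0$). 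In particular $P_q(\mF,r)\in\Z[q]$. A non-increasing index $j$ contributes a factor of degree $s_{j-1}$, while an increasing index contributes $q^{c_j}-q^{s_{j-1}}$, of degree $c_j$ (here $c_j>s_{j-1}$) with positive leading coefficient; so each summand has positive leading coefficient, no cancellation occurs, and, using $|\mF|=\sum_{j=1}^m c_j$,
\[
\deg(P_q(\mF,r))=\max_{(s_j)}\Bigl(\sum_{j:\,s_j=s_{j-1}}s_{j-1}+\sum_{j:\,s_j=s_{j-1}+1}c_j\Bigr)=|\mF|-\min_{(s_j)}\sum_{j:\,s_j=s_{j-1}}(c_j-s_{j-1}).
\]

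Next I would show $\tau(\mF,r)=\min_{(s_j)}\sum_{j:\,s_j=s_{j-1}}(c_j-s_{j-1})$, over the same set of sequences. Given $C\in\textnormal{NAR}(\mF,r)$, let $s_j$ count the rooks of $C$ in the first $j$ columns; this is one of the sequences above. Reading off Notation~\ref{not:invrook}: in a rook-free column $b$ a dot survives exactly when its row is not already occupied by a rook in a column $<b$, and those rows lie in $[c_{b-1}]\subseteq[c_b]$, so column $b$ contributes precisely $c_b-s_{b-1}$ to $\inv(C,\mF)$, while a column carrying a rook contributes a nonnegative amount. Summing over columns gives $\inv(C,\mF)\ge\sum_{j:\,s_j=s_{j-1}}(c_j-s_{j-1})$, hence $\tau(\mF,r)\ge\min_{(s_j)}(\cdots)$. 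For the reverse inequality, fix such a sequence and construct $C$ by processing its increasing indices $j$ in increasing order, placing in column $j$ a rook in the lowest not-yet-used row of that column: the previously used rows number $s_{j-1}<c_j$ and all lie in $[c_j]$, so such a row exists and lies below all of them, which forces every rook column to contribute $0$; then $\inv(C,\mF)=\sum_{j:\,s_j=s_{j-1}}(c_j-s_{j-1})$, giving the reverse inequality.

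Combining the two parts yields $\deg(P_q(\mF,r))+\tau(\mF,r)=|\mF|$; when $r$ exceeds the term rank of $\mF$, both $\textnormal{NAR}(\mF,r)$ and the set of rank-$r$ matrices in $\F_q[\mF]$ are empty, so $P_q(\mF,r)=R_q(\mF,r)=0$ and the identity persists with the convention $\deg(0)=-\infty=\tau(\mF,r)$. I expect the delicate point to be the column-wise accounting of $\inv(C,\mF)$ in the third step — in particular, checking that the greedy ``lowest free row'' placement makes every rook column contribute nothing to $\inv(C,\mF)$ — rather than the recursion or the degree extraction, which are routine.
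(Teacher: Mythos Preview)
Your argument is correct and self-contained, and it takes a genuinely different route from the one the paper cites. The cited approach goes through Haglund's identity, which expresses $P_q(\mF,r)$ as an explicit $q$-shift of $R_q(\mF,r)$, so that the relation $\deg(P_q)+\tau=|\mF|$ falls out by matching leading and trailing terms of one polynomial against the other. You instead isolate a single combinatorial quantity, the minimum of $\sum_{j:\,s_j=s_{j-1}}(c_j-s_{j-1})$ over rank-profile sequences, and show directly that it equals both $|\mF|-\deg(P_q(\mF,r))$ (via the column-by-column expansion, which is nothing but the closed form of the recursion the paper later quotes from \cite{gluesing2020partitions} in the proof of Theorem~\ref{th:trai}) and $\tau(\mF,r)$ (via a column-wise reading of $\inv$). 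This is more elementary and arguably more transparent for the purpose at hand; the price is that you do not recover Haglund's full identity, only its degree shadow.

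Two small points. First, in your greedy step the phrase ``lies below all of them'' is not literally correct: with $c_1=c_2=3$ and both columns increasing, your rule places rooks in rows $3$ and then $2$, so the second rook is \emph{above} the first. What \emph{is} true, and is exactly what you need, is that every row of $[c_j]$ strictly below the chosen row is already occupied (this is the defining property of ``lowest free''), hence every dot below the rook in that column is crossed out by a rook to its left; together with the column-wise crossing above the rook this gives the desired zero contribution. Second, when $r$ exceeds the maximal rank both $P_q(\mF,r)$ and $R_q(\mF,r)$ vanish, so both $\deg$ and $\tau$ equal $-\infty$ and the equation $\deg+\tau=|\mF|$ does not persist; the statement should be read under the hypothesis $\textnormal{NAR}(\mF,r)\neq\emptyset$ (equivalently $P_q(\mF,r)\neq 0$).
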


In some of our statements, we will assume 
$\kappa(\mF,r) \ge 1$. The next result shows that this assumption only excludes pairs $(\mF,d)$ 
for which the corresponding $q$-rook polynomial is the zero polynomial, and 
for which
Conjecture~\ref{conj:ES} is trivial.

\begin{proposition} \label{prop:exist_r}
Let $\mF$ be an $n \times m$ Ferrers diagram and let $1 \le r \le \min\{n,m\}$ be an integer. Then 
$\kappa(\mF,r) \ge 1$ if and only if 
there exists a matrix 
$M \in \F_q[\mF]$ with $\rk(M) \ge r$.
\end{proposition}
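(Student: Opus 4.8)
The plan is to pass through non-attacking rook placements: I would prove that a matrix in $\F_q[\mF]$ of rank at least $r$ exists exactly when $\textnormal{NAR}(\mF,r)\neq\emptyset$, and that the latter is equivalent to $\kappa(\mF,r)\ge 1$, using König's theorem together with a reduction that exploits the shape of a Ferrers diagram.

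The implication ``there exists $M$ with $\rk(M)\ge r$'' $\Rightarrow$ ``$\kappa(\mF,r)\ge 1$'' is immediate: if $M\in\F_q[\mF]$ has $\rk(M)\ge r$, then $\langle M\rangle$ is a $1$-dimensional $[\mF,r]_q$-space, because $\rk(\lambda M)=\rk(M)\ge r$ for every nonzero $\lambda\in\F_q$, and Theorem~\ref{thm:dimbound} applied with $d=r$ gives $1\le\kappa(\mF,r)$. For the converse, note first that any $C\in\textnormal{NAR}(\mF,r)$ produces the $0/1$ matrix supported on $C\subseteq\mF$ whose $r$ nonzero rows are distinct standard basis vectors, hence of rank $r$; so it suffices to show that $\kappa(\mF,r)\ge 1$ forces $\textnormal{NAR}(\mF,r)\neq\emptyset$. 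Here I would invoke König's theorem for the bipartite cell graph of $\mF$ (vertex classes $[n]$ and $[m]$, one edge per point of $\mF$): the maximum size of a non-attacking rook placement in $\mF$ equals the minimum number of rows and columns whose union contains $\mF$. Hence $\textnormal{NAR}(\mF,r)=\emptyset$ would mean that $\mF$ is contained in the union of a set $R$ of rows and a set $S$ of columns with $|R|+|S|\le r-1$, and the task reduces to excluding this.

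The core step is thus: if a Ferrers diagram $\mF$ is covered by $r-1$ lines, then $\kappa(\mF,r)=0$. Write $\mF=[c_1,\dots,c_m]$ with $c_1\le\dots\le c_m$; by top-alignment the points of column $j$ are $(1,j),\dots,(c_j,j)$, and column $m$ contains all of $(1,m),\dots,(n,m)$. Since $|R|+|S|\le r-1<r\le\min\{n,m\}$ and every column of $\mF$ is nonempty, $S\neq[m]$, so $j^\ast:=\max([m]\setminus S)$ is defined; moreover $j^\ast\neq m$, since otherwise the $n$ points of column $m$ would all have to be covered by $R$, forcing $|R|\ge n$. Thus $1\le j^\ast\le m-1$. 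As $j^\ast\notin S$, the points of column $j^\ast$ must be covered by $R$, so $\{1,\dots,c_{j^\ast}\}\subseteq R$ and $|R|\ge c_{j^\ast}$; by maximality of $j^\ast$ we have $\{j^\ast+1,\dots,m\}\subseteq S$, so $|S|\ge m-j^\ast$. Therefore $c_{j^\ast}+(m-j^\ast)\le|R|+|S|\le r-1$, so $j:=c_{j^\ast}$ satisfies $0\le j\le r-1$ and $r-1-j\ge m-j^\ast$. Finally, by top-alignment and monotonicity of $(c_j)$, every point of $\mF$ in a column $t\le j^\ast$ lies in a row $\le c_t\le c_{j^\ast}=j$, and every point in a column $>j^\ast$ lies in one of the $r-1-j$ rightmost columns; hence removing the top $j$ rows and the rightmost $r-1-j$ columns deletes all of $\mF$, i.e.\ $\kappa_j(\mF,r)=0$, so $\kappa(\mF,r)=0$, a contradiction.

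I expect the only real obstacle to be this last reduction, namely replacing an arbitrary economical line cover of $\mF$ by one of the canonical shape (topmost rows plus rightmost columns) that is counted by $\kappa_j(\mF,r)$ in Notation~\ref{not:kappa}; the whole argument turns on the monotonicity of the column heights $c_j$, which is precisely what makes the index $j^\ast$ work. The other ingredients---Theorem~\ref{thm:dimbound} for one direction, and König's theorem plus the trivial rook-placement-to-matrix construction for the other---are routine.
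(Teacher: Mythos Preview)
Your argument is correct and takes a genuinely different route from the paper. For the direction ``$\kappa(\mF,r)\ge 1$ $\Rightarrow$ existence of $M$'', the paper observes (with a forward reference to a claim proved later inside Lemma~\ref{lem:techn}) that $\kappa(\mF,r)\ge 1$ forces $|D_r\cap\mF|=r$, and then simply places $1$'s along the $r$th diagonal $D_r$ to obtain a rank-$r$ matrix. You instead pass through K\"onig's theorem: the absence of an $r$-rook placement would give a line cover of size at most $r-1$, and you then normalize this cover---via the pivot index $j^\ast=\max([m]\setminus S)$ and the monotonicity of the column heights---into one of the canonical ``top $j$ rows plus rightmost $r-1-j$ columns'' shapes that define $\kappa_j(\mF,r)$, forcing $\kappa(\mF,r)=0$. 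Your route is self-contained (no forward reference) and makes explicit the combinatorial equivalence between rook placements, line covers, and the $\kappa_j$'s; the paper's route is shorter but leans on the diagonal machinery developed for the main results. The reverse direction is the same in both proofs, via Theorem~\ref{thm:dimbound}.
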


\begin{proof}
Note that $\kappa(\mF,r) \ge 1$ implies that $|D_i \cap \mF|=i$ for all $1 \le i \le r$, as we will show later in Claim~\ref{cl:1}. Therefore, under this assumption, there exists a matrix $M \in \F_q[\mF]$ with $\rk(M)\ge r$, for example by choosing $M$ to be the matrix with 1's in the positions indexed by~$D_r$ and 0's elsewhere. For the other implication, suppose $\kappa(\mF,r) =0$. From Theorem~\ref{thm:dimbound} it follows that there cannot exist $M \in \F_q[\mF]$ with $\rk(M) \ge r$, since we assume that $r \ge 1$.
\end{proof}

As already mentioned, the main rook theory contribution of this section is a closed formula for the trailing degree of a $q$-rook polynomial associated with a Ferrers diagram $\mF$. In contrast with the definition of the ``inv'' statistics (Notation~\ref{not:invrook}), which considers vertical and horizontal deletions, our characterization is expressed in terms of the diagonals of $\mF$.

\begin{theorem} \label{th:trai}
Let $\mF$ be an $n \times m$ Ferrers diagram and let $1 \le r \le \min\{n,m\}$ be an integer with $\kappa(\mF,r) \ge 1$.
We have
$$\tau(\mF,r) = \sum_{i=1}^{m+n-1} \max\{0,|D_i \cap \mF| -r\}.$$
\end{theorem}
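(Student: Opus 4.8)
The plan is to prove the formula by establishing a matching lower and upper bound on $\tau(\mF,r)$, interpreting $\tau(\mF,r)$ directly as the minimum of $\inv(C,\mF)$ over all non-attacking rook placements $C \subseteq \mF$ with $|C| = r$. First I would observe that the hypothesis $\kappa(\mF,r) \ge 1$ guarantees, via Theorem~\ref{thm:dimbound} and the argument sketched in Proposition~\ref{prop:exist_r}, that $|D_i \cap \mF| = i$ for $1 \le i \le r$; in particular $\textnormal{NAR}(\mF,r) \ne \emptyset$ (the rooks can be placed along $D_r$), so $\tau(\mF,r)$ is a genuine nonnegative integer and the formula is not vacuous. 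The key structural fact I want to exploit is that, given a non-attacking placement $C$ of $r$ rooks, the dots of $\mF$ that are \emph{not} crossed out in the computation of $\inv(C,\mF)$ are exactly those lying strictly below and strictly to the left of every rook; and the cancellation pattern interacts cleanly with the diagonals $D_i$ because moving along a diagonal changes $j-i$ by zero, so a dot on $D_i$ is ``protected'' from a rook precisely when that rook lies on a strictly higher diagonal (smaller index, i.e.\ to its upper-right) or, row/column-wise, dominates it.

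For the lower bound $\tau(\mF,r) \ge \sum_i \max\{0, |D_i \cap \mF| - r\}$, I would fix an arbitrary $C \in \textnormal{NAR}(\mF,r)$ and count, diagonal by diagonal, how many dots of $D_i \cap \mF$ can survive the cancellation. The point is that on each diagonal $D_i$, at most $r$ of its dots can be crossed out ``cheaply'': each of the $r$ rooks cancels, within a fixed diagonal, a contiguous terminal segment (the part of $D_i \cap \mF$ lying weakly above-and-to-the-right of that rook), and since $\mF$ is top- and right-aligned these segments are nested, so their union on $D_i$ is cancelled by essentially the ``extreme'' rook relative to $D_i$. A careful version of this says that the number of uncrossed dots on $D_i$ is at least $|D_i \cap \mF| - r$ (it is also trivially $\ge 0$), and summing over all $m+n-1$ diagonals gives $\inv(C,\mF) \ge \sum_{i} \max\{0, |D_i \cap \mF| - r\}$. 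The care needed here is to make precise that a single rook, even though it cancels dots on many different diagonals, cancels on each individual diagonal only a set that is dominated (in the union) by one ``worst'' rook per diagonal — this monotonicity/nesting argument driven by the Ferrers shape is where I expect the real work to be.

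For the upper bound, I would construct an explicit placement $C^\ast$ achieving equality. The natural candidate is the ``staircase'' placement: process the nonzero diagonals $D_{i_1}, D_{i_2}, \dots$ of $\mF$ (those with $|D_{i} \cap \mF| \ge$ the relevant threshold) from the outermost inward, and greedily place rooks so that the first $r$ rooks sit along the outermost diagonal-like antichain, with subsequent choices pushed as far to the upper-right as possible so as to maximize the number of cancelled dots on each inner diagonal; equivalently, mimic the MDS-type construction of Remark~\ref{rem:constr} combinatorially. One then verifies directly that for this $C^\ast$ exactly $r$ dots get crossed out on each diagonal that has at least $r$ dots, and all dots get crossed out on the shorter diagonals, yielding $\inv(C^\ast,\mF) = \sum_i \max\{0, |D_i\cap\mF| - r\}$. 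Combining the two bounds gives the theorem. As an alternative to the explicit construction, one could instead deduce the upper bound from Theorem~\ref{degHag} together with the known degree of $P_q(\mF,r)$ (which, by Haglund's theorem and the MDS-constructibility analysis, relates $|\mF| - \deg P_q(\mF,r)$ to the diagonal sum whenever $\kappa(\mF,r)\ge 1$) — but I would prefer the self-contained combinatorial route, keeping Theorem~\ref{degHag} as a consistency check. The main obstacle, as noted, is the precise bookkeeping in the lower bound: showing that across a single diagonal the total cancellation caused by $r$ rooks never exceeds $r$ dots, which rests entirely on the nesting of the per-rook cancellation regions forced by the right- and top-alignment of $\mF$.
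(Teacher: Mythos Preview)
Your lower-bound argument has a genuine gap. The central claim --- that on each diagonal $D_i$ at most $r$ dots are crossed out by $r$ non-attacking rooks --- is false. A rook at $(a,b)$ cancels only dots \emph{in its own column above it} and \emph{in its own row to its right} (this is the standard rook-theoretic convention, confirmed by the worked example around Figure~\ref{F-F13335}: the dots $(1,3)$ and $(2,3)$ survive even though they lie northeast of the rook at $(3,2)$). So on a diagonal $D_i$ with $i<j$ (where $D_j$ is the rook's diagonal) a single rook may cancel \emph{two} dots: one via its column, one via its row. These per-rook contributions are not ``contiguous terminal segments'' of $D_i$, nor are they nested across rooks. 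Concretely, take $\mF=[3,3,3]$, $r=1$, and the single rook at $(3,1)$: on $D_3=\{(1,1),(2,2),(3,3)\}$ two dots are cancelled (so only one survives, while your bound predicts at least $|D_3|-1=2$ survivors), and on $D_4$ both dots are cancelled. The total $\inv$ still equals $\sum_i\max\{0,|D_i\cap\mF|-r\}=4$, but only because deficits on $D_3,D_4$ are exactly compensated by surpluses on $D_1,D_2$; your diagonal-by-diagonal inequality does not hold and cannot be summed.

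The same issue undermines your upper-bound verification: for the optimal placement you will not see ``exactly $r$ cancelled on each long diagonal''; the accounting is global, not local to diagonals. The paper's proof avoids combinatorial rook-by-rook bookkeeping altogether: it passes through Theorem~\ref{degHag} to reduce the statement to $\deg(P_q(\mF,r))=\sum_i\min\{r,|D_i\cap\mF|\}$, and then verifies (Lemma~\ref{lem:techn}) that the diagonal sum satisfies the same column-deletion recursion and initial conditions as $\deg(P_q(\mF,r))$ (the latter recursion coming from~\cite{gluesing2020partitions}). If you want a direct combinatorial proof, you will need a global argument relating the total number of cancelled cells to $\sum_i\min\{r,|D_i\cap\mF|\}$, not a per-diagonal one.
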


The main ingredient in the proof of Theorem~\ref{th:trai} is the following technical result. Its role in the proof of Theorem~\ref{th:trai} will become clear later.

\begin{lemma} \label{lem:techn}
Let $\mF=[c_1, \dots, c_m]$ be an $n \times m$ Ferrers diagram with $m \ge 2$. Let $1 \le r \le \min\{n,m\}$ be an integer with $\kappa(\mF,r) \ge 1$.
Denote by $\smash{\mF'=[c_1, \dots, c_{m-1}]}$ the $\smash{c_{m-1} \times (m-1)}$ Ferrers diagram obtained from~$\mF$ by deleting the rightmost column. Moreover, 
denote the diagonals of the new matrix board of size $c_{m-1} \times (m-1)$ by $D_i' \subseteq [c_{m-1}] \times [m-1]$, for $1 \le i \le c_{m-1}+m$.
We have
\begin{multline*}
   \sum_{i=1}^{m+n-1} \min\{r,|D_i \cap \mF|\} = \\ \max \left\{ n+ \sum_{i=1}^{c_{m-1}+m-2} 
\min\{r-1,|D'_i \cap \mF'|\}, \, 
r+ \sum_{i=1}^{c_{m-1}+m-2} 
\min\{r,|D'_i \cap \mF'|\} \right\}.
\end{multline*}
\end{lemma}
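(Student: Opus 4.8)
The plan is to turn the claimed identity into a statement purely about the diagonal cardinalities $|D_i\cap\mF|$: first express the diagonals of $\mF$ in terms of those of $\mF'$, then do the elementary algebra with the $\min$'s, and finally reduce everything to one monotonicity property of $i\mapsto|D_i\cap\mF|$.

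\textbf{Step 1: a recursion for the diagonals.} Since $(n,m)\in\mF$, top-alignment forces the last column to be full, so $c_m=n$ and $(i,m)\in\mF$ for $1\le i\le n$. Splitting $D_i\cap\mF$ into its intersection with column $m$ (the single cell $(i,m)$, which lies in $\mF$ precisely when $1\le i\le n$) and its intersection with the first $m-1$ columns (which, after reindexing, is exactly $D'_{i-1}\cap\mF'$: a cell with $b-a=m-i$ in columns $\le m-1$ sits on the diagonal numbered $(m-1)-(i-1)$ of the $c_{m-1}\times(m-1)$ board), one gets, for every $1\le i\le m+n-1$, that $|D_i\cap\mF|=|D'_{i-1}\cap\mF'|+1$ if $1\le i\le n$ and $|D_i\cap\mF|=|D'_{i-1}\cap\mF'|$ if $i>n$, with the conventions $|D'_0\cap\mF'|=0$ and $|D'_s\cap\mF'|=0$ for $s>c_{m-1}+m-2$. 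This step is pure bookkeeping.

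\textbf{Step 2: algebraic reduction.} Write $b_s:=|D'_s\cap\mF'|$ and $M:=c_{m-1}+m-2$. Substituting Step~1 into $\sum_{i=1}^{m+n-1}\min\{r,|D_i\cap\mF|\}$, reindexing, splitting the sum at $i=n$, and using the elementary identity $\min\{r,1+x\}=1+\min\{r-1,x\}$ on the terms with $i\le n$, the left-hand side becomes $n+\sum_{s=1}^{n-1}\min\{r-1,b_s\}+\sum_{s=n}^{M}\min\{r,b_s\}$. Writing $A=\sum_{s=1}^{M}\min\{r-1,b_s\}$, $B=\sum_{s=1}^{M}\min\{r,b_s\}$ and $k=\#\{1\le s\le M:b_s\ge r\}=B-A$, the right-hand side of the Lemma is $A+n+\max\{0,\,r-n+k\}$, while a short computation rewrites the left-hand side as $A+n+k'$ with $k'=\#\{n\le s\le M:b_s\ge r\}$. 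Since $k=k'+\ell$ with $\ell=\#\{1\le s\le n-1:b_s\ge r\}$, the Lemma is equivalent to $k'=\max\{0,\,r-n+k'+\ell\}$, i.e.\ to the two assertions (I) $\ell\le n-r$ and (II) if $k'\ge1$ then $\ell=n-r$. Translating $b_s$ back through Step~1 ($b_s=|D_{s+1}\cap\mF|-1$ for $s\le n-1$, $b_s=|D_{s+1}\cap\mF|$ for $s\ge n$) and using Claim~\ref{cl:1} ($\kappa(\mF,r)\ge1$ forces $|D_i\cap\mF|=i$ for $i\le r$), one gets $\ell=\#\{r+1\le i\le n:|D_i\cap\mF|\ge r+1\}$, so (I) is automatic (there are only $n-r$ such indices), and (II) becomes: if $|D_p\cap\mF|\le r$ for some $p\in\{r+1,\dots,n\}$, then $|D_i\cap\mF|\le r-1$ for every $i$ with $n<i\le m+n-1$.

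\textbf{Step 3: the monotonicity statement, and the main obstacle.} Let $\mu_v$ be the length of the $v$-th row of $\mF$, so $m=\mu_1\ge\mu_2\ge\dots\ge\mu_n\ge1$, and set $f(v):=\mu_v+v$. Since $(v,j)\in\mF\iff\mu_v\ge m-j+1$, counting the cells of $D_i$ inside $\mF$ gives, for every $i\ge1$,
$$|D_i\cap\mF|=\#\{v\in\{1,\dots,n\}:f(v)\ge i+1\}-\max\{0,n-i\}.$$
Consequently $\widetilde a_i:=|D_i\cap\mF|+\max\{0,n-i\}$ is non-increasing in $i$; moreover $\kappa(\mF,r)\ge1$ is equivalent to $f(v)\ge r+1$ for all $v$, and $f$ satisfies $f(v-1)\ge f(v)-1$ and $f(v)\le m+v$. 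From $|D_p\cap\mF|\le r$ one reads off $\widetilde a_p\le r+n-p$ (i.e.\ at least $p-r$ rows satisfy $f(v)\le p$), and for $i>n$ one has $|D_i\cap\mF|=\widetilde a_i\le\widetilde a_{n+1}\le\widetilde a_p$; so (II) will follow once we upgrade this to the sharp bound $\widetilde a_{n+1}\le r-1$. I would do this by contradiction: if $r$ rows $v$ had $f(v)\ge n+2$, let $v^{\ast}$ be the $r$-th smallest such row. If $v^{\ast}<p$, the $r$ rows below $v^{\ast}$ are $\le p-1$ and have $f(v)>p+1$, forcing $\widetilde a_p\ge r+(n-p)+1$, a contradiction. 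If $v^{\ast}\ge p$, the Lipschitz bound $f(w)\ge f(v^{\ast})-(v^{\ast}-w)$ for $w\le v^{\ast}$ produces $f(w)\ge p+1$ for a whole block of rows, and unless $v^{\ast}$ lies in the short range $\{n+2-r,\dots,n\}$ this forces $\widetilde a_p=n$, hence $|D_p\cap\mF|=p>r$, again a contradiction. The residual case $v^{\ast}\in\{n+2-r,\dots,n\}$, where all these bounds are nearly tight, is the delicate part: it has to be ruled out by a more careful argument using $f(v)\ge r+1$ together with $f(v)\le m+v$ (and, if needed, by taking $p$ to be the least index in $\{r+1,\dots,n\}$ with $|D_p\cap\mF|\le r$, which gives the extra information that $\widetilde a_{p-1}-\widetilde a_p\ge 2$). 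The small degenerate situations $p>m$ and $r=\min\{n,m\}$ (empty index range, or short penultimate column) are handled directly. Steps 1 and 2 are routine rewriting; pinning down this residual case in Step 3 — and thereby seeing exactly where $\kappa(\mF,r)\ge1$ is genuinely used — is what I expect to be the real work.
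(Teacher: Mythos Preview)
Your Steps 1 and 2 are correct and give a cleaner reduction than the paper's own argument: you distill the lemma to the single combinatorial assertion (II), namely that if $|D_p\cap\mF|\le r$ for some $p\in\{r+1,\dots,n\}$, then $|D_i\cap\mF|\le r-1$ for every $i>n$. The paper reaches essentially the same assertion (its Claims~3 and~4), but gets there by computing the two terms of the maximum separately rather than through your unified $A,B,k,k',\ell$ bookkeeping.

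The gap is in Step~3. You introduce $f(v)=\mu_v+v$ and try to derive a contradiction from $\widetilde a_{n+1}\ge r$ via a case split on the position of the $r$-th smallest row with $f(v)\ge n+2$. Case~1 ($v^\ast<p$) is fine, but in Case~2 you end up with a residual range $v^\ast\in\{n+2-r,\dots,n\}$ where the Lipschitz bound and the constraint $\widetilde a_p\le r+n-p$ are \emph{compatible} rather than contradictory, and the extra tools you list ($f(v)\ge r+1$, $f(v)\le m+v$, minimality of $p$) are not obviously enough to close it. You say this is ``the real work''; as written, it is simply not done.

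The paper avoids this case analysis entirely with a direct propagation argument (its Claim~4). If $r\le i<\min\{n,m\}$ and $D_i\setminus\mF=\{(a_1,b_1),\dots,(a_t,b_t)\}$ is nonempty, then by top- and right-alignment both $(a_s+1,b_s)$ and $(a_s,b_s-1)$ lie in $D_{i+1}\setminus\mF$; since the row indices $a_1,a_1+1,\dots,a_t,a_t+1$ take at least $t+1$ distinct values, one gets $|D_{i+1}\setminus\mF|\ge t+1$, hence $|D_{i+1}\cap\mF|\le|D_i\cap\mF|$. Iterating to $i=\min\{n,m\}$ and then once more (where $|D_{i+1}|\le|D_i|$) gives $|D_i\cap\mF|\le r-1$ for all $i>\min\{n,m\}$; a trivial column comparison extends this past $n$ when $n>m$. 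This two-line idea replaces your entire Step~3 and makes the residual case disappear. I would recommend grafting it onto your (superior) reduction from Steps~1--2.
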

\begin{proof} 
Define $I := \{1 \le i \le \min\{n,m\} \, : \, r \ge |D_i \cap \mF|\}$. The remainder of the proof contains multiple claims, which we prove separately. We start with the following one, which heavily relies on our assumption $\kappa(\mF,r) \ge 1$.

\begin{claim} \label{cl:1}
We have $|D_i \cap \mF|=i$ for all $1 \le i \le r$. 
\end{claim}
\begin{clproof}
It is enough to prove that $|D_r \cap \mF|=r$, because $\mF$ is a Ferrers diagram.
Towards a contradiction, assume that $|D_r \cap \mF|<r$ and let $(i,m-r+i) \in D_r \backslash \mF$ for some integer $1 \le i \le r$. 
Then for every $(a,b) \in [n] \times [m]$ with $a>i-1$ and $b<m-r+i+1$
we have $(a,b) \notin \mF$.
In particular, $\kappa_{i-1}(\mF,r)=0$. This is a contradiction, because $1 \le \kappa(\mF,r) \le \kappa_{i-1}(\mF,r)$. 
\end{clproof}

A straightforward consequence of Claim~\ref{cl:1} is that $|I| \ge r$. We will also need the following intermediate result.

\begin{claim} \label{cl:2}
We have
\begin{align} \label{eq:recursion}
    \sum_{i=1}^{m+n-1} \min\{r,|D_i \cap \mF|\}  \, = \sum_{i=1}^{c_{m-1}+m-1}\min\{r,|D_i \cap \mF|\}.
\end{align}
\end{claim}
\begin{clproof}
We will show that $|D_i \cap \mF|=0$ for $i \ge c_{m-1}+m$, which establishes the claim. Since 
$\mF$ is a Ferrers diagram,
for all $(a,b) \in \mF$ with $b \le m-1$ we have that $a \le c_{m-1}$. Now if $|D_i \cap \mF|>0$ for some $i \ge c_{m-1}+m$, then there exists $(a,b) \in \mF$ with $b \le m-c_{m-1}$ and $a \ge c_{m-1}+1$, yielding a contradiction.
\end{clproof}

We continue by investigating each of the two expressions in the maximum in the statement of the lemma separately. First of all, we have
\allowdisplaybreaks
\begin{align*} 
\allowdisplaybreaks
n+\sum_{i=1}^{c_{m-1}+m-2} 
&\min\{r-1,|D'_i \cap \mF'|\} \\ &= 
n+\sum_{i=2}^{c_{m-1}+m-1} 
\min\{r-1,|D'_{i-1} \cap \mF'|\} \\
&= n+\sum_{i=2}^{n} 
\min\{r-1,|D'_{i-1} \cap \mF'|\} + \sum_{i=n+1}^{c_{m-1}+m-1} \min\{r-1,|D'_{i-1} \cap \mF'|\} \\
&= n+\sum_{i=2}^{n} 
\min\{r-1,|D_i \cap \mF|-1\} + \sum_{i=n+1}^{c_{m-1}+m-1} 
\min\{r-1,|D_i \cap \mF|\} \\
&= n-(n-1)+ \sum_{i=2}^{n} 
\min\{r,|D_i \cap \mF|\} + \sum_{i=n+1}^{c_{m-1}+m-1} 
\min\{r-1,|D_i \cap \mF|\} \\
&= 1+\sum_{i=2}^{n} 
\min\{r,|D_i \cap \mF|\} + \sum_{i=n+1}^{c_{m-1}+m-1} 
\min\{r-1,|D_i \cap \mF|\}.
\end{align*}
Since $\min\{r,|D_1 \cap \mF|\}=|D_1 \cap \mF|=1$, we then obtain
\begin{align}\label{eq:cl222}
\allowdisplaybreaks
    n+\sum_{i=1}^{c_{m-1}+m-2} 
\min\{r-1,|D'_i \cap \mF'|\} \nonumber &= \sum_{i=1}^{n} 
\min\{r,|D_i \cap \mF|\} + \sum_{i=n+1}^{c_{m-1}+m-1} 
\min\{r-1,|D_i \cap \mF|\} \nonumber \\
&\le \sum_{i=1}^{c_{m-1}+m-1} 
\min\{r,|D_i \cap \mF|\} \nonumber\\  &= \sum_{i=1}^{m+n-1} 
\min\{r,|D_i \cap \mF|\},
\end{align}
where the latter equality follows from
Claim~\ref{cl:2}.

\begin{claim}\label{cl:4}
Assume that $r < \min\{n,m\}$. If $r \le i < \min\{n,m\}$ and $|D_i \cap \mF| \le r$, then $|D_{i+1} \cap \mF| \le r$. Moreover, if $|D_{\min\{n,m\}} \cap \mF| \le r$, then we have $|D_{i} \cap \mF| \le r-1$ for all $\min\{n,m\}+1 \le i \le n+m-1$.
\end{claim}
\begin{clproof}
Let $r \le i \le \min\{n,m\}-1$ and $|D_i \cap \mF| \le r$. Since $r < \min\{n,m\}$ and $|D_i \cap \mF| \le r$, we have $|D_i \backslash \mF| = t \ge 1$. Let $D_i \backslash \mF = \{(a_1,b_1),\dots, (a_t,b_t)\}$ with $a_1 < \dots < a_t$. We have 
\begin{align*}
    \{(a_1+1,b_1),\dots, (a_t+1,b_t)\} \cup \{(a_1,b_1-1),\dots, (a_t,b_t-1)\} \subseteq D_{i+1} \backslash \mF.
\end{align*}
Since $|\{a_1, a_1+1, \dots, a_t, a_t+1\}| \ge t+1$, it follows that 
\begin{align*}
    |\{(a_1+1,b_1),\dots, (a_t+1,b_t)\} \cup \{(a_1,b_1-1),\dots, (a_t,b_t-1)\}| \ge t+1.
\end{align*}
Therefore $| D_{i+1} \backslash \mF| \ge t+1$, hence
\begin{align} \label{eq:cl4}
    i+1-|D_{i+1}\cap \mF| = |D_{i+1}\backslash \mF| \ge |D_{i}\backslash \mF| +1 = i-|D_{i}\cap \mF|+1 \ge i-r+1,
\end{align}
where we used that $|D_i|=i$ and that $|D_{i+1}|=i+1$. Rewriting~\eqref{eq:cl4} proves the first statement in the claim.

For the second part, suppose that $|D_{\min\{n,m\}} \cap \mF| \le r$ and write $|D_{\min\{n,m\}} \backslash \mF| = \{(a_1,b_1),\dots, (a_t,b_t)\} \ne \emptyset$ with $a_1 < \dots < a_t$. Then following the same reasoning as before one shows that $| D_{\min\{n,m\}+1} \backslash \mF| \ge t+1$ and, similar to~\eqref{eq:cl4}, that 
\begin{align*}
    \min\{n,m\}-|D_{\min\{n,m\}+1}\cap \mF| \ge \min\{n,m\}-r+1,
\end{align*}
where we used that $|D_{\min\{n,m\}+1}| \le \min\{n,m\}$. This shows that $|D_{\min\{n,m\}+1} \cap \mF| \le r-1$. In an analogous way one proves that $|D_{i} \cap \mF| \le r-1$ for all $\min\{n,m\}+1 \le i \le n+m-1$.
\end{clproof}

The following claim gives a sufficient condition for when the bound in~\eqref{eq:cl222} is attained with equality.
\begin{claim}\label{cl:3}
If $|I| > r$, then $|D_i \cap \mF| \le r-1$ for all $\min\{n,m\}+1 \le i \le m+n-1$.
In particular,
$|D_i \cap \mF| \le r-1$ for all $n+1 \le i \le c_{m-1}+m-1$.
\end{claim}
\begin{clproof}
Let $|I| = s > r$. Clearly we then have
$r< \min\{n,m\}$.
Write $I=\{i_1, \dots, i_s\}$ with $i_1 <i_2 < \dots < i_s \le \min\{n,m\}$.
By Claim~\ref{cl:1} we have $i_j=j$ for all $j \in \{1, \dots,r\}$. In particular, $r<i_s$. Note that by definition of $I$ we have $|D_{i_s} \cap \mF| \le r$ and by Claim~\ref{cl:4} this implies that $i_s=\min\{n,m\}$. Using the second part of Claim~\ref{cl:4} we also conclude that $|D_i \cap \mF| \le r-1$ for all $\min\{n,m\}+1 \le i \le m+n-1$. 
\end{clproof}

If $|I| > r$, then by Claim~\ref{cl:3} we 
have equality in~\eqref{eq:cl222}, which means
\begin{align} \label{eq:maxx}
    n+\sum_{i=1}^{c_{m-1}+m-2} \min\{r-1,|D'_i \cap \mF'|\}= \sum_{i=1}^{m+n-1} 
\min\{r,|D_i \cap \mF|\}.
\end{align}
Note moreover that if $i \in I\backslash\{1\}$, then we have $\min\{r,|D_i \cap \mF|\} = \min\{r,|D_{i-1}' \cap \mF'|\}+1$ and if $i \in [n] \backslash I$ then $\min\{r,|D_i \cap \mF|\} = \min\{r,|D_{i-1}' \cap \mF'|\}$. Furthermore, for all $i \ge n+1$
we have $\min\{r,|D_i \cap \mF|\} = \min\{r,|D_{i-1}' \cap \mF'|\}$. Therefore
\allowdisplaybreaks
\begin{align*} 
    \sum_{i=1}^{m+n-1} &\min\{r,|D_i \cap \mF|\}  \\ &= \sum_{i=1}^{c_{m-1}+m-1}\min\{r,|D_i \cap \mF|\}  \\
    &= \sum_{i\in I}\min\{r,|D_i \cap \mF|\} + \sum_{i\in [n] \backslash I}\min\{r,|D_i \cap \mF|\} + \sum_{i=n+1}^{c_{m-1}+m-1}\min\{r,|D_i \cap \mF|\} \\
    &= 1+\sum_{i\in I\backslash\{1\}}\left(\min\{r,|D_{i-1}' \cap \mF'|\}+1\right) + \sum_{i\in [n] \backslash I}\min\{r,|D_{i-1}' \cap \mF'|\} \\ & \hspace{7.8cm}  +\sum_{i=n+1}^{c_{m-1}+m-1}\min\{r,|D_{i-1}' \cap \mF'|\} \\
    &= |I| + \sum_{i=1}^{c_{m-1}+m-2}\min\{r,|D_{i}' \cap \mF'|\},
\end{align*}
where the first equality follows from Claim~\ref{cl:2}.
In particular,
\begin{align*}
    \sum_{i=1}^{m+n-1} \min\{r,|D_i \cap \mF|\} \ge  r+ \sum_{i=1}^{c_{m-1}+m-2}\min\{r,|D'_i \cap \mF'|\},
\end{align*}
with equality if and only if $|I|=r$. Together with~\eqref{eq:maxx}, this concludes the proof.
\end{proof}

We are now ready to establish 
Theorem~\ref{th:trai}.

\begin{proof}[Proof of Theorem~\ref{th:trai}]
Since the union of the diagonals $D_1, \ldots, D_{m+n-1}$
is the entire matrix board $[n] \times [m]$, we have 
\begin{equation} \label{hag2}
|\mF|-\sum_{i=1}^{m+n-1} \min\{r,|D_i \cap \mF|\} = \sum_{i=1}^{m+n-1} \max\{0,|D_i \cap \mF|-r\}.
\end{equation}
Therefore, by Theorem~\ref{degHag}, proving Theorem~\ref{th:trai} is equivalent to proving that
\begin{equation}
    \deg(P_q(\mF,r)) = \sum_{i=1}^{m+n-1} \min\{r,|D_i \cap \mF|\}.
\end{equation}
It follows from~\cite[Theorem 7.1]{gluesing2020partitions} that
for an $n \times m$ Ferrers diagram $\mF=[c_1,\ldots,c_m]$
the quantity $\deg(P_q(\mF,r))$
is uniquely determined by the recursion
\begin{equation} \label{recc} \deg(P_q(\mF,r)) = 
\max\biggl\{  
n+\deg(P_q(\mF',r-1)), \; r+\deg(P_q(\mF',r))
\biggr\},
\end{equation}
where $\mF'=[c_1,\ldots,c_{m-1}]$,
with initial conditions:
\begin{equation} \label{casess}
    \begin{cases}
\deg(P_q(\mF,0))=0 & \mbox{ for all Ferrers diagrams $\mF$}, \\
\deg(P_q(\mF,1))=c_1 & \mbox{ if $\mF=[c_1]$ is a $c_1 \times 1$ Ferrers diagram,} \\
\deg(P_q(\mF,r))=-\infty & \mbox{ if $\mF$ is a $c_1 \times 1$ Ferrers diagram and $r \ge 2$.} 
\end{cases}
\end{equation}
By Lemma~\ref{lem:techn}, the quantity
$$\Delta_q(\mF,r):= \begin{cases}
-\infty & \mbox{if $\kappa(\mF,r)=0$,} \\
\sum_{i=1}^{m+n-1} \min\{r,|D_i \cap \mF|\} & \mbox{otherwise,}
\end{cases}$$ 
satisfies
the recursion and the initial conditions
in~\eqref{recc} and~\eqref{casess}, respectively.
Therefore it must be that
$\deg(P_q(\mF,r)) = 
\Delta_q(\mF,r)$ for every $\mF$ and $r$, which proves the theorem.
\end{proof}

We can now return to the combinatorial characterization of MDS-constructible pairs.
We start by observing the following.

\begin{proposition} \label{thm:mdsbound}
Let $\mF$ be an $n \times m$ Ferrers diagram and let $1 \le d \le \min\{n,m\}$ be an integer. We have
$$\kappa(\mF,d) \ge \sum_{i=1}^{m+n-1} \max\{0, |D_i \cap \mF|-d+1\}.$$
\end{proposition}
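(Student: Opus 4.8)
The plan is to prove the inequality $\kappa(\mF,d) \ge \sum_{i=1}^{m+n-1} \max\{0,|D_i\cap\mF|-d+1\}$ by relating both sides to quantities we already understand. The key observation is that the right-hand side is exactly $\tau(\mF,d-1)$ when $\kappa(\mF,d-1)\ge 1$, by Theorem~\ref{th:trai} applied with $r=d-1$ (provided $d\ge 2$; the case $d=1$ must be handled separately, where the right-hand side is $\sum_i|D_i\cap\mF|=|\mF|=\kappa(\mF,1)$ and equality holds trivially). So for $d\ge 2$ it suffices to compare $\kappa(\mF,d)$ with $\tau(\mF,d-1)$.

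First I would dispose of the degenerate cases. If $\kappa(\mF,d)=0$, then $|D_d\cap\mF|<d$ (otherwise the matrix with ones on $D_d$ would have rank $d$, contradicting Proposition~\ref{prop:exist_r}), and in fact the argument of Claim~\ref{cl:1} shows $|D_i\cap\mF|<i\le$ (something forcing collapse) — more carefully, one checks that $\kappa(\mF,d)=0$ forces $|D_i\cap\mF|\le d-1$ for all $i$, so every term $\max\{0,|D_i\cap\mF|-d+1\}$ vanishes and the inequality reads $0\ge 0$. Similarly if $\kappa(\mF,d-1)=0$ then $\kappa(\mF,d)=0$ as well. So the substantive case is $\kappa(\mF,d-1)\ge 1$ (hence Theorem~\ref{th:trai} applies at $r=d-1$), and I want to show $\kappa(\mF,d)\ge\tau(\mF,d-1)$.

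The cleanest route is a coding-theoretic / counting argument rather than a direct combinatorial manipulation of the $\kappa_j$'s. By Theorem~\ref{degHag} we have $\tau(\mF,d-1)=|\mF|-\deg(P_q(\mF,d-1))$, where $P_q(\mF,d-1)$ counts matrices in $\F_q[\mF]$ of rank exactly $d-1$. On the other hand, by Theorem~\ref{thm:dimbound}, an $[\mF,d]_q$-space has dimension at most $\kappa(\mF,d)$; equivalently, any subspace $\mC\le\F_q[\mF]$ with $\dim\mC>\kappa(\mF,d)$ must contain a nonzero matrix of rank $\le d-1$. I would use a dimension/counting argument: the number of matrices in $\F_q[\mF]$ of rank at most $d-1$ is a polynomial in $q$ of degree $\deg(P_q(\mF,d-1)) = |\mF|-\tau(\mF,d-1)$ (the lower-rank strata contribute strictly smaller degree, which follows from Theorem~\ref{degHag} applied to each $r<d$ together with monotonicity of $\tau$ in $r$ on the relevant range, or can be extracted directly). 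A generic subspace of dimension $|\mF|-\deg(P_q(\mF,d-1))=\tau(\mF,d-1)$ avoids this low-rank locus for $q$ large — this is precisely the non-constructive existence heuristic the introduction alludes to, but here I only need it to produce the inequality: if an $[\mF,d]_q$-space of dimension $\tau(\mF,d-1)$ exists for some $q$, then $\tau(\mF,d-1)\le\kappa(\mF,d)$ by Theorem~\ref{thm:dimbound}, which is exactly what we want.

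The main obstacle is making the genericity step rigorous without circularity: I must show that for sufficiently large $q$ there is a subspace of dimension $\tau(\mF,d-1)$ meeting $\F_q[\mF]$ only in matrices of rank $\ge d$, using nothing beyond the degree bound on $P_q(\mF,d-1)$. The standard device is to count pairs $(\mC, M)$ with $\mC$ a $\tau$-dimensional subspace of $\F_q[\mF]$ and $M\in\mC$ nonzero of rank $<d$: the number of such pairs is at most (number of nonzero low-rank matrices) $\times$ (number of $\tau$-subspaces through a fixed line), and comparing the $q$-degree of this product with the $q$-degree of the total number of $\tau$-subspaces of $\F_q[\mF]$ shows the former is strictly smaller once $q\gg 0$, so some $\mC$ has no such $M$. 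The degree bookkeeping here is routine given Theorem~\ref{degHag}, but one must be careful that the bound $\deg P_q(\mF,d-1)=|\mF|-\tau(\mF,d-1)$ dominates $\deg P_q(\mF,r)$ for all $r\le d-1$; this is where I would invoke that $\tau(\mF,r)$ is nonincreasing in $r$ — immediate from the closed formula in Theorem~\ref{th:trai} since each summand $\max\{0,|D_i\cap\mF|-r\}$ is nonincreasing in $r$ — guaranteeing $|\mF|-\deg P_q(\mF,r)=\tau(\mF,r)\ge\tau(\mF,d-1)$ and hence $\deg P_q(\mF,r)\le\deg P_q(\mF,d-1)$.
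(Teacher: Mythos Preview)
Your argument is correct but follows a very different and considerably longer route than the paper. The paper gives a direct five-line combinatorial proof: choose $j$ with $\kappa_j(\mF,d)=\kappa(\mF,d)$, let $\mF_j\subseteq\mF$ be what remains after deleting the top $j$ rows and the rightmost $d-1-j$ columns, and observe that each diagonal $D_i$ meets any single row or column in at most one point, so this deletion removes at most $j+(d-1-j)=d-1$ points of $D_i\cap\mF$. Hence $|D_i\cap\mF_j|\ge\max\{0,|D_i\cap\mF|-d+1\}$ for every $i$, and summing over $i$ gives $\kappa(\mF,d)=|\mF_j|=\sum_i|D_i\cap\mF_j|\ge\sum_i\max\{0,|D_i\cap\mF|-d+1\}$.

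Your approach instead identifies the right-hand side with $\tau(\mF,d-1)$ via Theorem~\ref{th:trai}, and then proves $\kappa(\mF,d)\ge\tau(\mF,d-1)$ by a counting/existence argument: for large $q$ a generic $\tau(\mF,d-1)$-dimensional subspace of $\F_q[\mF]$ avoids the low-rank locus, yielding an $[\mF,d]_q$-space whose dimension is then bounded above by $\kappa(\mF,d)$ via Theorem~\ref{thm:dimbound}. This is logically sound and not circular (neither Theorem~\ref{th:trai} nor Theorem~\ref{degHag} depends on the present proposition), and it is essentially the machinery the paper develops later in Sections~\ref{sec:3} and~\ref{sec:4}. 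What you gain is a proof that foregrounds the rook-theoretic meaning of both sides; what you lose is simplicity, since you are invoking substantial results to establish an elementary inequality. Note also that the paper's pointwise estimate $|D_i\cap\mF_j|\ge\max\{0,|D_i\cap\mF|-d+1\}$ is reused verbatim in the proof of Theorem~\ref{prop:newmdsconstr}, so the direct argument pays a second dividend there.
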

\begin{proof}
Let $0 \le j \le d-1$ be an integer such that $\kappa_j(\mF,d)=\kappa(\mF,d)$.
Denote by $\mF_j$ the subset of $\mF$ made by those points that are not contained in the topmost $j$ rows of $\mF$, nor in its rightmost $d-1-j$ columns. 
We have $|D_i \cap \mF_j| \ge \max\{|D_i \cap \mF| -d+1,0\}$ for all $1 \le i \le m+n-1$. Summing these inequalities over $i$ gives
\begin{equation} \label{bbb}
\kappa(\mF,d) = \sum_{i=1}^{m+n-1} |D_i \cap \mF_j| \ge \sum_{i=1}^{m+n-1} \max\{0,|D_i \cap \mF| -d+1\},
\end{equation}
where the first equality in~\eqref{bbb} follows from the fact that the diagonals are disjoint and their union is $\mF$. 
\end{proof}

The bound of Proposition~\ref{thm:mdsbound} is not sharp in general.

\begin{example}
The value of $\tau(\mF,d-1)$ for the $5\times 6$
Ferrers diagram $\mF=[5,5,5,5,5,5]$ and $d=4$ is 
$\tau(\mF,3)=6$. 
Note that we have $\kappa(\mF,4) = 12>6=\tau(\mF,3)$.
\end{example}

The following theorem shows that when defining MDS-constructible pairs one can consider the sum over all the diagonals, if $d \ge 2$.
This gives us a characterization of MDS-constructible pairs that is symmetric in $n$ and $m$; see
Remark~\ref{rmk:symm}.

\begin{theorem} \label{prop:newmdsconstr}
Let $\mF$ be an $n \times m$ Ferrers diagram with $m \ge n$ and let $2 \le d \le n$ be an integer. Then the following are equivalent:
\begin{enumerate}
    \item $\smash{\kappa(\mF,d) = \sum_{i=1}^m \max\{0, |D_i \cap \mF|-d+1\}}$, i.e., the pair $(\mF,d)$ is MDS-constructible in the sense of Definition~\ref{def:mdsconstr};
    \item $\smash{\kappa(\mF,d) = \sum_{i=1}^{m+n-1} \max\{0, |D_i \cap \mF|-d+1\}}$.
\end{enumerate}
\end{theorem}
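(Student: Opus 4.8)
The plan is to sandwich both quantities appearing in the statement between the same pair of bounds. Write $A := \sum_{i=1}^{m}\max\{0,|D_i\cap\mF|-d+1\}$ and $B := \sum_{i=1}^{m+n-1}\max\{0,|D_i\cap\mF|-d+1\}$, so that assertion (1) reads $\kappa(\mF,d)=A$ and assertion (2) reads $\kappa(\mF,d)=B$. Since the summands for $m+1\le i\le m+n-1$ are nonnegative, we always have $A\le B$, and Proposition~\ref{thm:mdsbound} gives $B\le\kappa(\mF,d)$; hence $A\le B\le\kappa(\mF,d)$. In particular $\kappa(\mF,d)=A$ forces $\kappa(\mF,d)=B$ by squeezing, so (1)$\Rightarrow$(2) needs no further work, and (2)$\Rightarrow$(1) is reduced to showing that $\kappa(\mF,d)=B$ implies $A=B$, i.e.\ that $|D_{m+t}\cap\mF|\le d-1$ for every $1\le t\le n-1$ (this is exactly the vanishing of $B-A=\sum_{t=1}^{n-1}\max\{0,|D_{m+t}\cap\mF|-d+1\}$).

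For this I would establish the contrapositive in a sharp form: \emph{if $|D_{m+t}\cap\mF|\ge d$ for some $1\le t\le n-1$, then $B<\kappa(\mF,d)$.} Fix an arbitrary $j\in\{0,\dots,d-1\}$ and let $\mF_j\subseteq\mF$ be obtained by deleting the top $j$ rows and the rightmost $d-1-j$ columns, exactly as in the proof of Proposition~\ref{thm:mdsbound}; there $\kappa_j(\mF,d)=|\mF_j|=\sum_{i=1}^{m+n-1}|D_i\cap\mF_j|$ and $|D_i\cap\mF_j|\ge\max\{0,|D_i\cap\mF|-d+1\}$ for every $i$, since a diagonal meets each deleted row or column in at most one point. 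The crux of the argument is that for $i=m+t$ this last inequality is \emph{strict}, uniformly in $j$.

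To see this, note that $D_{m+t}$ consists of the points $(i,i-t)$ with $t+1\le i\le n$, hence lies in rows $t+1,\dots,n$ and columns $1,\dots,n-t$. A point $(i,i-t)$ of $D_{m+t}\cap\mF$ is deleted in passing to $\mF_j$ only if $i\le j$ or $i-t\ge m-d+2+j$; using $m\ge n$, the number of such points is at most $\max\{0,j-t\}+\max\{0,d-1-j-t\}$, and a short case distinction (on whether $j\le t$, using $j\ge0$ and $t\ge1$, or $j>t$, using $j\le d-1$ and $t\ge1$) shows that this is at most $d-2$. Since $|D_{m+t}\cap\mF|\ge d$, we conclude $|D_{m+t}\cap\mF_j|\ge|D_{m+t}\cap\mF|-(d-2)>|D_{m+t}\cap\mF|-d+1=\max\{0,|D_{m+t}\cap\mF|-d+1\}$. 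Summing over all diagonals gives $\kappa_j(\mF,d)>B$, and since $j$ was arbitrary, $\kappa(\mF,d)=\min_{0\le j\le d-1}\kappa_j(\mF,d)>B$, as claimed.

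The only genuine obstacle I anticipate is that counting bound — that removing $j$ top rows and $d-1-j$ right columns can delete at most $d-2$ points of a sub-diagonal $D_{m+t}$, for \emph{every} $j$. It is elementary but must be verified by the indicated case analysis, and it is precisely here that the hypotheses $m\ge n$ and $d\ge2$ are used. Note that this proof does not invoke Theorem~\ref{th:trai}; combining the equivalence just proved with Theorem~\ref{th:trai} applied to $r=d-1$ identifies the common value of $\kappa(\mF,d)$ in (1) and (2) with $\tau(\mF,d-1)$, which is the rook-theoretic characterization stated in Corollary~\ref{cor:main}. (If $\kappa(\mF,d)=0$ the statement is vacuously true: by Proposition~\ref{thm:mdsbound} both sums then vanish, so (1) and (2) both hold.)
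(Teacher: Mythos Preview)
Your proof is correct and follows essentially the same approach as the paper: both use the sandwich $A\le B\le\kappa(\mF,d)$ from Proposition~\ref{thm:mdsbound} for (1)$\Rightarrow$(2), and for (2)$\Rightarrow$(1) both argue that a diagonal $D_u$ with $u\ge m+1$ loses at most $d-2$ points under any admissible deletion of $j$ rows and $d-1-j$ columns, yielding a strict inequality. The only difference is organizational: the paper fixes the minimizing $j$ and derives pointwise equality $|D_i\cap\mF_j|=\max\{0,|D_i\cap\mF|-d+1\}$ from the sum equality before reaching the contradiction, whereas you prove $\kappa_j(\mF,d)>B$ directly for every $j$; your explicit case analysis on $j\le t$ versus $j>t$ spells out what the paper leaves as ``a contradiction, because $m\ge n$, $d\ge 2$, and $u\ge m+1$''.
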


\begin{proof} 
By Proposition~\ref{thm:mdsbound}, we have
$$\sum_{i=1}^m \max\{|D_i \cap \mF|-d+1,0\} \le 
\sum_{i=1}^{m+n-1} \max\{0,|D_i \cap \mF|-d+1\} \le \kappa(\mF,d).$$
Thus $(\mF,d)$ being MDS-constructible implies  $\kappa(\mF,d) = \sum_{i=1}^{m+n-1} \max\{0, |D_i \cap \mF|-d+1\}$.
For the other direction, we need to show that if $\kappa(\mF,d) = \sum_{i=1}^{m+n-1}\max\{0,|D_i \cap \mF|-d+1\}$, then $\kappa(\mF,d) = \sum_{i=1}^{m}\max\{0,|D_i \cap \mF|-d+1\}$ as well. We proceed by contradiction and suppose 
that $$\kappa(\mF,d) = \sum_{i=1}^{m+n-1}\max\{|D_i \cap \mF|-d+1,0\} > \sum_{i=1}^{m}\max\{|D_i \cap \mF|-d+1,0\}.$$ Then there exists a diagonal $D_u$, for some $u \ge m+1$, with $|D_u \cap \mF|-d+1 > 0$. 
Let $0 \le j \le d-1$ be such that
$\kappa(\mF,d) = \kappa_j(\mF,d)$.
Denote by $\mF_j$ the subset of $\mF$ made by those points that are not contained in the topmost $j$ rows of $\mF$, nor in its rightmost $d-1-j$ columns. Then 
$|\mF_j|=\kappa(\mF,d)$
and as in the proof of Proposition~\ref{thm:mdsbound} we have $|D_i \cap \mF_j| \ge
\max\{0,|D_i \cap \mF|-d+1\}$ for all $1 \le i \le m+n-1$.  Summing over $i$ gives $$\kappa(\mF,d)=\sum_{i=1}^{m+n-1} |D_i \cap \mF_j| \ge 
\sum_{i=1}^{m+n-1} \max\{0,|D_i \cap \mF|-d+1\} = \kappa(\mF,d),$$
where the first equality follows from the definition of $\mF_j$ and the latter equality is by assumption.
Since $|D_i \cap \mF_j| \ge \max\{0,|D_i \cap \mF|-d+1\}$ for all $1 \le i \le m+n-1$, this implies $|D_i \cap \mF_j| = \max\{0,|D_i \cap \mF|-d+1\}$ for all $1 \le i \le m+n-1$. In particular,  since
$|D_u \cap \mF|-d+1 > 0$ by assumption, we must have
$|D_u \cap \mF_j| = |D_u \cap \mF|-d+1$. 
This implies that $D_u \cap \mF$ contains $d-1$ entries that belong to the topmost $j$ rows and rightmost $d-1-j$ columns of $\mF$. This is however a contradiction, because $m \ge n$, $d \ge 2$, and $u \ge m+1$, again by assumption.
\end{proof}

In view of Theorem~\ref{prop:newmdsconstr}, we propose the following slightly modified definition of MDS-constructible pair, which coincides with the one of~\cite{antrobus2019maximal} when $m \ge n$ and $d \ge 2$. This addresses the points discussed in Remark~\ref{rmk:symm}. Notice that according to our definition the pair $(\mF,1)$ is always MDS-constructible, while it might not be according to Definition~\ref{def:mdsconstr}.

\begin{definition}[updates Definition~\ref{def:mdsconstr}] \label{def:updates}
Let $\mF$ be an $n \times m$ Ferrers diagram and let $1 \le d \le \min\{n,m\}$ be an integer. The pair $(\mF,d)$ is called \textbf{MDS-constructible} if 
$$\kappa(\mF,d) = \sum_{i=1}^{m+n-1} \max\{0,|D_i \cap \mF|-d+1\}.$$
\end{definition}

By combining Theorem~\ref{th:trai} with Theorem~\ref{prop:newmdsconstr}, we finally obtain a pure rook theory characterization of MDS-constructible pairs.

\begin{corollary} \label{cor:main}
Let $\mF$ be an $n \times m$ Ferrers diagram and let $1 \le d \le \min\{n,m\}$ be an integer with $\kappa(\mF,d) \ge 1$.
The following are equivalent:
\begin{enumerate}
    \item The pair $(\mF,d)$ is MDS-constructible, according to Definition~\ref{def:updates}; 
    \item $\kappa(\mF,d) = \tau(\mF,d-1)$.
\end{enumerate}
\end{corollary}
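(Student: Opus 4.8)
The plan is to derive the statement almost immediately by combining the two main results already at our disposal. The key observation is that Theorem~\ref{prop:newmdsconstr} and Definition~\ref{def:updates} together reduce the notion of MDS-constructibility to a single identity, namely
$$\kappa(\mF,d) = \sum_{i=1}^{m+n-1} \max\{0,|D_i \cap \mF|-d+1\},$$
and that Theorem~\ref{th:trai} identifies the right-hand side (with the substitution $r = d-1$) with $\tau(\mF,d-1)$, provided $\kappa(\mF,d-1) \ge 1$. So the proof consists of three moves: first reconcile Definition~\ref{def:updates} with the original Definition~\ref{def:mdsconstr} for the case $d \ge 2$, $m \ge n$ (this is exactly Theorem~\ref{prop:newmdsconstr}); second, handle the edge case $d = 1$ directly; and third, invoke Theorem~\ref{th:trai} to rewrite the diagonal sum as a trailing degree.

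First I would dispose of the case $d = 1$. Here $\tau(\mF,0)$ is the trailing degree of $R_q(\mF,0)$, which is the constant polynomial $1$ (the empty rook placement has $\inv = |\mF|$... wait, no — the empty placement crosses out nothing, so $\inv(\emptyset,\mF) = |\mF|$), hence $R_q(\mF,0) = q^{|\mF|}$ and $\tau(\mF,0) = |\mF|$. On the other hand $\kappa(\mF,1) = |\mF|$ and $\sum_{i=1}^{m+n-1}\max\{0,|D_i\cap\mF| - 0\} = \sum_i |D_i \cap \mF| = |\mF|$, since the diagonals partition $\mF$. Thus for $d = 1$ all three of $\kappa(\mF,1)$, the diagonal sum, and $\tau(\mF,0)$ coincide with $|\mF|$, so $(\mF,1)$ is MDS-constructible in the sense of Definition~\ref{def:updates} and the equality $\kappa(\mF,1) = \tau(\mF,0)$ holds as well; the equivalence is trivially true. (I should double-check the convention for $\inv$ of the empty placement against Notation~\ref{not:invrook}; if instead the paper's convention forces $R_q(\mF,0)=1$, then $\tau(\mF,0)=0\ne|\mF|$ and the $d=1$ case would need the hypothesis $\kappa(\mF,d)\ge 1$ to be read more carefully — this is the one genuinely delicate point and the main thing to verify.)

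For $d \ge 2$, assume without loss of generality (transposing $\mF$ if necessary, which leaves $\kappa$, the full diagonal sum, and $\tau$ all invariant) that $m \ge n$. By Definition~\ref{def:updates}, $(\mF,d)$ is MDS-constructible if and only if $\kappa(\mF,d) = \sum_{i=1}^{m+n-1}\max\{0,|D_i\cap\mF| - d + 1\}$. Apply Theorem~\ref{th:trai} with $r = d - 1$: since $1 \le d-1 \le \min\{n,m\}$ and, by the hypothesis $\kappa(\mF,d) \ge 1$ together with the monotonicity $\kappa(\mF,d) \le \kappa(\mF,d-1)$ (directly from Notation~\ref{not:kappa}, as removing fewer lines cannot decrease the area — more precisely each $\kappa_j(\mF,d-1) \ge \kappa_{j}(\mF,d)$ style comparison), we have $\kappa(\mF,d-1) \ge 1$, the hypotheses of Theorem~\ref{th:trai} are met and it yields $\tau(\mF,d-1) = \sum_{i=1}^{m+n-1}\max\{0,|D_i\cap\mF| - (d-1)\} = \sum_{i=1}^{m+n-1}\max\{0,|D_i\cap\mF| - d + 1\}$. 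Substituting this into the defining identity gives at once: $(\mF,d)$ is MDS-constructible $\iff \kappa(\mF,d) = \tau(\mF,d-1)$. The main obstacle is not any deep argument but the bookkeeping around the boundary hypotheses — ensuring $\kappa(\mF,d)\ge 1$ indeed propagates to $\kappa(\mF,d-1)\ge 1$ so that Theorem~\ref{th:trai} applies, and pinning down the $d=1$ convention — after which the corollary is a one-line substitution.
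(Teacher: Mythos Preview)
Your approach is correct and matches the paper's: the corollary follows directly by applying Theorem~\ref{th:trai} with $r=d-1$ to Definition~\ref{def:updates}, together with the easy monotonicity $\kappa(\mF,d-1)\ge\kappa(\mF,d)\ge 1$. Your hesitation about the $d=1$ case is unwarranted --- by Notation~\ref{not:invrook} the empty placement has $\inv(\emptyset,\mF)=|\mF|$, so $\tau(\mF,0)=|\mF|=\kappa(\mF,1)$ as you computed; note also that neither the transposition step nor Theorem~\ref{prop:newmdsconstr} is strictly needed here, since Theorem~\ref{th:trai} has no $m\ge n$ hypothesis and Definition~\ref{def:updates} already uses the full diagonal sum.
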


In words, Corollary~\ref{cor:main} states that the construction of 
\cite{gorla2017subspace,etzion2016optimal,roth1991maximum}
is optimal if and only if
the Etzion-Silberstein Bound of Theorem~\ref{thm:dimbound} coincides
with the trailing degree of the $(d-1)$th $q$-rook polynomial of $\mF$.

\section{Asymptotics of the Etzion-Silberstein Conjecture}
\label{sec:3}

In this section we solve a problem that can be regarded as the 
``asymptotic'' analogue of the Etzion-Silberstein Conjecture for $q \to +\infty$ (see Problem~\ref{probb} below for a precise statement). As we will see, this problem has again a strong connection with rook theory.
In the remainder of the paper we will use the notion of MDS-constructible pair introduced in Definition~\ref{def:updates}.

\begin{notation}
We denote by
$$\qbin{a}{b}{q}= \prod_{i=0}^{b-1}\frac{\left(q^a-q^i\right)}{\left(q^b-q^i\right)}$$ be the $q$-binomial coefficient of integers $a \ge b \ge 0$, which counts the number of $b$-dimensional subspaces of an $a$-dimensional space over $\F_q$; see e.g.~\cite{stanley2011enumerative}. We will also  use the standard Bachmann-Landau notation (“Big O”, “Little O”, and~“$\sim$”) to describe the asymptotic growth of real-valued functions; see for example~\cite{de1981asymptotic}. If $Q$ denotes the set of prime powers, we omit ``$q \in Q$'' when writing $q \to +\infty$.
\end{notation}

In the remainder of this paper we will repeatedly need the following asymptotic estimate 
for the $q$-binomial coefficient: 
\begin{equation} \label{eq:qbin}
\qbin{a}{b}{q} \sim q^{b(a-b)} \quad \mbox{as $q \to +\infty$},
\end{equation}
for all
integers $a \ge b \ge 0$.
We will apply this well-known fact throughout the paper without explicitly
referring to it.

When studying the Etzion-Silberstein Conjecture in the asymptotic regime, we are interested in 
the asymptotic behavior, as $q \to +\infty$,
of the proportion of optimal~$[\mF,d]_q$-spaces among all spaces having the same dimension. This motivates the following definition.

\begin{definition} \label{def:delta}
Let $\mF$ be an $n \times m$ Ferrers diagram. 
For $1 \le k \le |\mF|$
and $1 \le d \le \min\{n,m\}$, let $$\delta_q(\mF, k, d):= \frac{|\{\mC \le \mat \, : \,  \mC \mbox{ is an $[\mF,d]_q$-space}, \; \dim(\mC)=k\}|}{\qbin{|\mF|}{k}{q}}$$
denote the \textbf{density} (\textbf{function}) of  $[\mF,d]_q$-spaces among all $k$-dimensional subspaces of $\F_q[\mF]$. Their \textbf{asymptotic density} as $q \to +\infty$ is
$\lim_{q \to + \infty} \delta_q(\mF, k, d)$, when the limit exists. Moreover, when the asymptotic density tends to 1 (as $q \to +\infty$), we say the corresponding spaces are \textbf{dense}; if it tends to 0, we say that they are \textbf{sparse}.
\end{definition}

The following problem can be viewed as the ``asymptotic'' analogue of the Etzion-Silberstein Conjecture.

\begin{problem} \label{probb}
Fix an $n \times m$ Ferrers diagram $\mF$ and an integer $1 \le d \le \min\{n,m\}$.
Determine for which values of $1 \le k \le |\mF|$ we have
$\lim_{q \to +\infty} \delta_q(\mF,k,d)=0$ and for which values we have $\lim_{q \to +\infty} \delta_q(\mF,k,d)=1$. Determine 
the value of $\lim_{q \to +\infty} \delta_q(\mF,\kappa(\mF,d),d)$.
\end{problem}

Problem~\ref{probb} has been proposed and solved in~\cite{antrobus2019maximal} for some classes of pairs $(\mF,d)$.
The two main results of~\cite{antrobus2019maximal} in this context are the following.

\begin{theorem}[see \textnormal{\cite[Theorem VI.8]{antrobus2019maximal}}] \label{thm:heide1}
Let $\mF$ be an $n \times m$ Ferrers diagram with $m \ge n$ and let $1 \le d \le n$ be an integer. If $(\mF,d)$ is MDS-constructible and 
$\kappa=\kappa(\mF,d)$, then  $\lim_{q \to +\infty} \delta_q(\mF,\kappa,d) = 1$.
\end{theorem}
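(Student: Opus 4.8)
\textbf{Proof proposal for Theorem~\ref{thm:heide1}.}

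The plan is to show that the density $\delta_q(\mF,\kappa,d)$ tends to $1$ by sandwiching it between $1$ and a ratio of polynomial-in-$q$ quantities whose leading terms agree. First I would count, up to a multiplicative $\sim 1$ factor, the denominator $\qbin{|\mF|}{\kappa}{q} \sim q^{\kappa(|\mF|-\kappa)}$ using~\eqref{eq:qbin}. For the numerator, the key point is that an $[\mF,d]_q$-space of dimension $\kappa$ is exactly a $\kappa$-dimensional subspace of $\F_q[\mF]$ that avoids the set $Z$ of all matrices in $\F_q[\mF]$ of rank at most $d-1$. I would estimate $|Z|$: by Notation~\ref{notPq} and Theorem~\ref{degHag}, $|Z| = \sum_{r=0}^{d-1} P_q(\mF,r)$ is a polynomial in $q$ whose degree is $\max_{0 \le r \le d-1}\bigl(|\mF| - \tau(\mF,r)\bigr) = |\mF| - \tau(\mF,d-1)$, since $\tau(\mF,r)$ is nonincreasing in $r$ (more non-attacking-rook deletions can only remove more dots). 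Because $(\mF,d)$ is MDS-constructible and $\kappa = \kappa(\mF,d)$, Corollary~\ref{cor:main} gives $\tau(\mF,d-1) = \kappa$, so $|Z| \sim c\, q^{|\mF|-\kappa}$ for some positive constant $c$ (in fact $c$ is the number of minimal non-attacking rook placements, but its value is irrelevant).

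Next I would bound the number of $\kappa$-dimensional subspaces of $\F_q[\mF]$ that do meet $Z$ nontrivially. Every such subspace contains a nonzero vector of $Z$; fixing a nonzero $v \in Z$, the number of $\kappa$-dimensional subspaces through $v$ is $\qbin{|\mF|-1}{\kappa-1}{q} \sim q^{(\kappa-1)(|\mF|-\kappa)}$. Hence the number of "bad" subspaces is at most
\begin{equation*}
(|Z|-1)\cdot \qbin{|\mF|-1}{\kappa-1}{q} = O\bigl(q^{|\mF|-\kappa}\bigr)\cdot O\bigl(q^{(\kappa-1)(|\mF|-\kappa)}\bigr) = O\bigl(q^{\kappa(|\mF|-\kappa)}\bigr).
\end{equation*}
This matches the order of the denominator, so this crude union bound is not by itself enough to conclude density $1$; it only shows the bad fraction is $O(1)$. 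To get $o(1)$ I would instead argue more carefully that the number of bad subspaces is $o\bigl(q^{\kappa(|\mF|-\kappa)}\bigr)$. The cleanest route is to use that $\kappa = \tau(\mF,d-1)$ is \emph{strictly} larger than the degree-gap would otherwise allow: more precisely, I expect that the bound can be sharpened by noting that a $\kappa$-dimensional subspace meeting $Z$ must contain a line inside $Z$, and the subvariety of the Grassmannian consisting of such subspaces has strictly smaller dimension than the full Grassmannian. Counting points: the number of lines in $Z$ is $(|Z|-1)/(q-1) = O(q^{|\mF|-\kappa-1})$, and through each line there are $\qbin{|\mF|-1}{\kappa-1}{q}\sim q^{(\kappa-1)(|\mF|-\kappa)}$ subspaces of dimension $\kappa$, giving at most $O\bigl(q^{|\mF|-\kappa-1+(\kappa-1)(|\mF|-\kappa)}\bigr) = O\bigl(q^{\kappa(|\mF|-\kappa)-1}\bigr)$ bad subspaces. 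Dividing by $\qbin{|\mF|}{\kappa}{q} \sim q^{\kappa(|\mF|-\kappa)}$ yields a bad fraction that is $O(1/q) \to 0$, so $\delta_q(\mF,\kappa,d) \to 1$.

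The main obstacle is making the degree bound $\deg |Z| = |\mF| - \tau(\mF,d-1)$ rigorous and connecting it to $\kappa$: this requires (i) Theorem~\ref{degHag} applied to each rank $r \le d-1$, (ii) the monotonicity $\tau(\mF,0) \ge \tau(\mF,1) \ge \cdots \ge \tau(\mF,d-1)$ so that the $r=d-1$ term dominates $|Z|$, and (iii) the MDS-constructibility hypothesis fed through Corollary~\ref{cor:main} to replace $\tau(\mF,d-1)$ by $\kappa$. Step (ii) deserves a short separate verification from Notation~\ref{not:invrook}: removing one more rook from a minimal placement cannot increase the count of surviving dots, since each rook crosses out a nonempty set of dots (including itself). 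Once these are in place, the counting in the previous paragraph is routine, and the factor $1/q$ gained from passing to lines rather than vectors is exactly what upgrades "$O(1)$" to "$o(1)$", giving the claimed density $1$. I would also remark that this argument in fact shows $1 - \delta_q(\mF,\kappa,d) = O(1/q)$, a slightly quantitative refinement.
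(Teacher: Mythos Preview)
Your argument is correct and coincides with the paper's own alternative proof: the paper does not prove Theorem~\ref{thm:heide1} directly but recovers it from Theorem~\ref{thm:sparsedense}(1) and Corollary~\ref{cor:ans}, whose proof invokes \cite[Theorem~4.2(1)]{gruica2020common} as a black box to obtain density~$1$ whenever $\deg(\bbq{\mF,d-1})\le|\mF|-k$, then uses Corollary~\ref{degrees} to rewrite this as $k\le\tau(\mF,d-1)$, and finally Corollary~\ref{cor:main} to identify $\tau(\mF,d-1)$ with $\kappa$ under MDS-constructibility. Your union bound over the lines contained in $Z$ is exactly that black-box lemma made explicit (and is also the count in Proposition~\ref{prop:existlb}), so the two arguments are the same in substance; you simply display the $O(1/q)$ estimate rather than cite it. The \emph{original} proof in \cite{antrobus2019maximal}, by contrast, goes through the algebraic closure of~$\F_q$ and a Zariski-density argument, which is a genuinely different route.

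One small correction: your justification of step~(ii) is stated backwards. \emph{Removing} a rook un-crosses dots and therefore \emph{increases} $\inv$; the intended statement is that \emph{adding} a rook to a minimal $(r-1)$-placement lowers $\inv$, but then one must check that a non-attacking rook can always be added inside~$\mF$, which is not immediate from Notation~\ref{not:invrook} (a surviving dot may still share a row or column with an existing rook, since only cells above or to the right get crossed out). You can sidestep this entirely: Corollary~\ref{degrees} already gives $\deg(\bbq{\mF,d-1})=\deg(P_q(\mF,d-1))=|\mF|-\tau(\mF,d-1)$ directly, so no monotonicity argument is needed to pin down $\deg|Z|$. With that replacement the proof is complete; the residual edge cases $\kappa=0$ and $d=1$ are trivial.
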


\begin{theorem}[see \textnormal{\cite[Corollary VI.13]{antrobus2019maximal}}] \label{thm:heide2}
Let $\mF$ be an $n \times m$ Ferrers diagram with $m \ge n$ and let $\kappa=\kappa(\mF,n)$. Then  $(\mF,n)$ is MDS-constructible if and only if $\lim_{q \to +\infty} \delta_q(\mF,\kappa,n) = 1$.
\end{theorem}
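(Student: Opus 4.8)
The plan is to establish the two implications separately. The forward implication requires nothing new: if $(\mF,n)$ is MDS-constructible, then Theorem~\ref{thm:heide1}, applied with $d=n$, already gives $\lim_{q\to+\infty}\delta_q(\mF,\kappa,n)=1$. For the reverse implication I would argue by contraposition, assuming that $(\mF,n)$ is \emph{not} MDS-constructible and proving that $\limsup_{q\to+\infty}\delta_q(\mF,\kappa,n)<1$. The case $n=1$ is vacuous, since then $d=1$ and $(\mF,1)$ is MDS-constructible by Definition~\ref{def:updates}, so I assume $n\ge 2$ and, to avoid a trivial degeneracy, $\kappa=\kappa(\mF,n)\ge 1$; as $\kappa(\mF,d)$ is non-increasing in $d$, this also gives $\kappa(\mF,n-1)\ge 1$. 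By Corollary~\ref{cor:main}, together with the inequality $\kappa(\mF,n)\ge\sum_{i=1}^{m+n-1}\max\{0,|D_i\cap\mF|-n+1\}=\tau(\mF,n-1)$ coming from Proposition~\ref{thm:mdsbound} and Theorem~\ref{th:trai}, the failure of MDS-constructibility means exactly that $\kappa\ge\tau(\mF,n-1)+1$.

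The crucial reformulation is that, since $d=n$, a subspace $\mC\le\F_q[\mF]$ is an $[\mF,n]_q$-space if and only if it contains no nonzero matrix of rank at most $n-1$. Writing $B\subseteq\F_q[\mF]$ for the set of matrices of rank at most $n-1$ (so $0\in B$), we thus have $\delta_q(\mF,\kappa,n)=1-\Pr[\mC\cap B\neq\{0\}]$ for $\mC$ a uniformly random $\kappa$-dimensional subspace of $\F_q[\mF]$, and it suffices to bound $\Pr[\mC\cap B\neq\{0\}]$ away from $0$. I would do this by a second-moment argument applied to the random variable $X$ that counts the lines $\langle M\rangle\subseteq\mC$ with $\rk(M)=n-1$; note that $X\ge 1$ forces $\mC\cap B\neq\{0\}$.

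By Theorem~\ref{degHag} and Theorem~\ref{th:trai}, $P_q(\mF,n-1)$ is a polynomial of degree $|\mF|-\tau(\mF,n-1)$; it has positive leading coefficient because $\kappa(\mF,n-1)\ge 1$ forces $D_{n-1}\subseteq\mF$ (as in the proof of Proposition~\ref{prop:exist_r}), so the $0/1$ matrix supported on $D_{n-1}$ has rank $n-1$ for every $q$. Hence the number $L$ of lines spanned by a rank-$(n-1)$ matrix satisfies $L=P_q(\mF,n-1)/(q-1)\sim c\,q^{|\mF|-\tau(\mF,n-1)-1}$ for some constant $c>0$, and since $\Pr[\langle M\rangle\subseteq\mC]=(q^\kappa-1)/(q^{|\mF|}-1)\sim q^{\kappa-|\mF|}$ we get $\mathbb{E}[X]\sim c\,q^{\kappa-\tau(\mF,n-1)-1}$, which stays bounded away from $0$ because $\kappa\ge\tau(\mF,n-1)+1$. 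For the second moment, the number of ordered pairs of distinct rank-$(n-1)$ lines is at most $L^2$, and the probability that a fixed $2$-dimensional subspace lies in $\mC$ is $\qbin{|\mF|-2}{\kappa-2}{q}\big/\qbin{|\mF|}{\kappa}{q}$, which by~\eqref{eq:qbin} is asymptotic to the square of the one-line probability; therefore $\mathbb{E}[X^2]\le\mathbb{E}[X]+(1+o(1))\,\mathbb{E}[X]^2$. The standard inequality $\Pr[X\ge 1]\ge\mathbb{E}[X]^2/\mathbb{E}[X^2]$ then yields
\[
\Pr[\mC\cap B\neq\{0\}]\ \ge\ \Pr[X\ge 1]\ \ge\ \frac{\mathbb{E}[X]}{1+(1+o(1))\,\mathbb{E}[X]},
\]
and since $\mathbb{E}[X]$ stays bounded away from $0$ the right-hand side does too; hence $\limsup_{q\to+\infty}\delta_q(\mF,\kappa,n)<1$, completing the contrapositive.

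I expect the main obstacle to be the second-moment estimate: one must verify that the probability for a fixed $2$-dimensional subspace to sit inside a random $\kappa$-dimensional subspace is asymptotic to the square of the one-line probability (a short $q$-binomial computation via~\eqref{eq:qbin}), which is precisely what makes $\mathbb{E}[X^2]\le\mathbb{E}[X]+(1+o(1))\mathbb{E}[X]^2$ and hence the second-moment bound effective. A secondary point that needs care is the rook-theoretic input, namely that $P_q(\mF,n-1)$ is genuinely a nonzero polynomial of the stated degree with positive leading coefficient; this is where Theorem~\ref{degHag}, the monotonicity $\kappa(\mF,n-1)\ge\kappa(\mF,n)\ge 1$, and Proposition~\ref{prop:exist_r} enter. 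Finally, the equivalence ``$(\mF,n)$ not MDS-constructible $\iff\kappa(\mF,n)\ge\tau(\mF,n-1)+1$'' is routine, being Corollary~\ref{cor:main} combined with the general inequality $\kappa(\mF,n)\ge\tau(\mF,n-1)$ of Proposition~\ref{thm:mdsbound} and Theorem~\ref{th:trai}.
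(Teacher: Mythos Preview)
Your argument is correct. In the paper, Theorem~\ref{thm:heide2} is quoted from~\cite{antrobus2019maximal} rather than proved directly; the paper's own route to it is through the more general Corollary~\ref{cor:ans}, which combines the characterization $\kappa(\mF,d)=\tau(\mF,d-1)$ of MDS-constructibility (Corollary~\ref{cor:main}, Proposition~\ref{thm:mdsbound}, Theorem~\ref{th:trai}) with Theorem~\ref{thm:sparsedense}. Your proof follows exactly this outline: the forward direction is Theorem~\ref{thm:heide1}, and for the reverse you reduce ``not MDS-constructible'' to $\kappa\ge\tau(\mF,n-1)+1$ and then show non-density. The only difference is in that last step: the paper obtains $\limsup_q\delta_q\le 1/2$ by invoking the bipartite-graph machinery of~\cite{gruica2020common} (this is precisely the content of parts~(2) and~(3) of Theorem~\ref{thm:sparsedense}), whereas you carry out a self-contained Paley--Zygmund/second-moment argument to get $\limsup_q\delta_q<1$. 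This is the same idea unpacked by hand rather than cited, and it gives a slightly weaker numerical conclusion ($<1$ instead of $\le 1/2$) which is nonetheless sufficient here. One minor remark: the clause ``since $d=n$'' is unnecessary, as the reformulation ``$[\mF,d]_q$-space $\Leftrightarrow$ no nonzero matrix of rank at most $d-1$'' holds for every $d$, so your argument in fact proves Corollary~\ref{cor:ans} in full.
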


Both Theorem~\ref{thm:heide1} and Theorem~\ref{thm:heide2} were established in~\cite{antrobus2019maximal}
by using arguments based on the 
algebraic closure of $\F_q$.
As the reader will soon notice, the approach taken in this paper is of a completely different nature.
It will allow us to generalize~\cite[Corollary VI.13]{antrobus2019maximal} and to 
solve Problem~\ref{probb} completely,
answering the question stated in~\cite[Open Problem~(a)]{antrobus2019maximal}; see Corollary~\ref{cor:ans} below.

The next result shows that Problem~\ref{probb}
exhibits a very strong connection with rook theory.
More in detail, it proves that
the decisive dimension for sparseness and density 
is precisely the trailing degree of the $(d-1)$th $q$-rook polynomial associated with $\mF$.

\begin{theorem} \label{thm:sparsedense}
Let $\mF$ be an $n \times m$ Ferrers diagram and let $2\le d \le n$ and $1 \le k \le |\mF|$ be integers
with $\kappa(\mF,d) \ge 1$.  The following hold.  
\begin{enumerate}
    \item If $k \le \tau(\mF,d-1)$, then $\lim_{q\to +\infty}\, \delta_q(\mF,k,d) = 1.$
    \item If $k \ge  \tau(\mF,d-1)+2$, then $\lim_{q\to +\infty}\, \delta_q(\mF,k,d) = 0.$
    \item If $k =  \tau(\mF,d-1)+1$, then $\limsup_{q\to +\infty}\, \delta_q(\mF,k,d) \le 1/2.$
\end{enumerate}
\end{theorem}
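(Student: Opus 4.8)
The plan is to count, for each dimension $k$, the number of $k$-dimensional $[\mF,d]_q$-spaces asymptotically and compare with $\qbin{|\mF|}{k}{q} \sim q^{k(|\mF|-k)}$. The key tool is Theorem~\ref{degHag} together with Theorem~\ref{th:trai}: the number $P_q(\mF,\le d-1)$ of matrices in $\F_q[\mF]$ of rank at most $d-1$ is a polynomial in $q$ of degree $|\mF| - \tau(\mF,d-1)$, since $\deg P_q(\mF,r) = |\mF| - \tau(\mF,r)$ and this is maximized over $0 \le r \le d-1$ at $r = d-1$ (the trailing degree $\tau(\mF,r)$ is nonincreasing in $r$). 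Write $\sigma = \tau(\mF,d-1)$, so the ``bad'' set $B = \{M \in \F_q[\mF] : \rk(M) \le d-1\}$ has size $\Theta(q^{|\mF|-\sigma})$; note $|B| \ge 1$ always since $0 \in B$, and a $k$-dimensional subspace $\mC$ is an $[\mF,d]_q$-space precisely when $\mC \cap B = \{0\}$.

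First I would prove part (1): if $k \le \sigma$, show the $[\mF,d]_q$-spaces are dense. The standard way is to bound the number of $k$-dimensional subspaces $\mC$ with $\mC \cap B \ne \{0\}$. Each such $\mC$ contains a nonzero vector of $B$; the number of nonzero vectors in $B$ is at most $|B| = O(q^{|\mF|-\sigma})$, i.e. $O(q^{|\mF|-k})$ when $k \le \sigma$, and the number of $k$-dimensional subspaces through a fixed nonzero vector is $\qbin{|\mF|-1}{k-1}{q} \sim q^{(k-1)(|\mF|-k)}$. Multiplying, the number of ``bad'' subspaces is $O(q^{(|\mF|-k) + (k-1)(|\mF|-k)}) = O(q^{k(|\mF|-k)} \cdot q^{-(|\mF|-k)})$ when $k \le \sigma$... more carefully: it is $O(q^{|\mF|-\sigma + (k-1)(|\mF|-k)})$, and comparing the exponent with $k(|\mF|-k)$, the difference is $(|\mF| - \sigma) - (|\mF|-k) = k - \sigma \le 0$, with strict inequality giving $o(q^{k(|\mF|-k)})$ when $k < \sigma$. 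The boundary case $k = \sigma$ needs a slightly sharper count — one uses that $|B| - 1$ (the number of \emph{nonzero} bad vectors) is $O(q^{|\mF|-\sigma})$ but one must actually check the ratio tends to $0$; here one can either invoke a more refined inclusion-exclusion or observe that the leading terms cancel because distinct bad subspaces are overcounted. Actually the cleanest route for $k \le \sigma$ is a union bound giving $\limsup \le \lim \frac{(|B|-1)\qbin{|\mF|-1}{k-1}{q}}{\qbin{|\mF|}{k}{q}}$ and then noting $\qbin{|\mF|}{k}{q} / \qbin{|\mF|-1}{k-1}{q} = (q^{|\mF|}-1)/(q^k-1) \sim q^{|\mF|-k}$, so the ratio is $\sim (|B|-1) q^{k-|\mF|} = O(q^{(|\mF|-\sigma)+k-|\mF|}) = O(q^{k-\sigma})$, which is $o(1)$ for $k < \sigma$ and $O(1)$ for $k = \sigma$; for $k=\sigma$ one improves by showing the overcount is genuine, e.g. each bad $2$-dimensional intersection is counted $\sim q$ times, forcing the true count down by a factor $q$.

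For parts (2) and (3) I would count from the other side. A $k$-dimensional $[\mF,d]_q$-space is an injective linear map from $\F_q^k$; equivalently, choose an ordered basis $(v_1,\dots,v_k)$ with each $v_i$ avoiding the span-plus-$B$ obstruction. The number of $[\mF,d]_q$-spaces is $\frac{1}{|\mathrm{GL}_k(\F_q)|}\prod_{i=0}^{k-1}(q^{|\mF|} - |V_{i-1} + B|)$ where $V_{i-1}$ is the span of the first $i-1$ chosen vectors — but cleaner is to directly estimate: the number equals $\frac{1}{|\mathrm{GL}_k|} \cdot \#\{(v_1,\dots,v_k) : v_i \notin \langle v_1,\dots,v_{i-1}\rangle + (B\setminus\{0\}) \text{ appropriately}\}$. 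The dominant contribution: when $k \ge \sigma + 2$, I claim \emph{every} $k$-dimensional subspace is forced to meet $B$ nontrivially in ``most'' cases — the comparison is that the total number of $k$-dim subspaces is $\sim q^{k(|\mF|-k)}$ while the number of $[\mF,d]_q$-spaces is $O(q^{k(|\mF|-k) - (k-\sigma-1)})$ or similar, hence the ratio is $o(1)$; here I would count $[\mF,d]$-spaces by first picking a complement structure relative to $B$. The most efficient argument: a $k$-subspace $\mC$ with $\mC \cap B = 0$ is determined by a $k \times |\mF|$ matrix in reduced row echelon form whose rows, read as elements of $\F_q[\mF]$, together with all their $\F_q$-combinations avoid $B$; bounding the number of such is delicate. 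I expect the main obstacle to be exactly this upper bound in parts (2) and (3): one needs that once $k$ exceeds $\sigma$ by $2$, the ``forbidden'' set $B$ is too large (dimension-theoretically, the variety of rank-$\le(d-1)$ matrices in $\F_q[\mF]$ has ``codimension'' $\sigma$) for a generic $k$-subspace to avoid it, and making this precise asymptotically — rather than over $\overline{\F_q}$ as in~\cite{antrobus2019maximal} — requires a careful double count. I would handle it by the identity
\begin{equation*}
\delta_q(\mF,k,d) = \frac{\prod_{i=1}^{k}\big(q^{|\mF|-i+1} - n_i(q)\big)}{|\mathrm{GL}_k(\F_q)|\,\qbin{|\mF|}{k}{q}} \cdot (\text{averaging over flags}),
\end{equation*}
where $n_i(q)$ counts bad extensions, then use $|\mathrm{GL}_k(\F_q)|\qbin{|\mF|}{k}{q} = \prod_{i=1}^k (q^{|\mF|-i+1}-1)$ to reduce to estimating $\prod_i (1 - (n_i(q)-?)/(q^{|\mF|-i+1}-1))$; the sum $\sum_i n_i(q)/q^{|\mF|-i+1}$ is governed by $|B|/q^{|\mF|} \sim q^{-\sigma}$ times combinatorial factors, and tracking whether this sum diverges, converges to a constant, or vanishes yields the three regimes $k \le \sigma$, $k = \sigma+1$, $k \ge \sigma+2$ respectively. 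For part (3) specifically, the $\limsup \le 1/2$ should come out of an inclusion–exclusion lower bound on the ``bad'' subspaces: at the threshold the expected number of nonzero bad vectors in a random $k$-subspace tends to a positive constant $c$, and a second-moment / Bonferroni argument gives density $\le 1 - c + c^2/2 + \dots$; pinning it at exactly $1/2$ presumably uses that this constant is $1$, i.e. the relevant Poisson parameter equals $1$ at $k = \sigma+1$, and $1 - (1 - e^{-1} - \dots)$... more likely the bound is elementary: bad subspaces contain $\ge 1$ bad line, good ones contain $0$, and a parity/pairing between the two classes at the critical dimension gives the clean $1/2$. I would make part (3) rigorous last, as it is the subtlest.
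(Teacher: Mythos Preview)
Your proposal and the paper diverge in a basic way: the paper does not prove this theorem from scratch. It simply observes that $\tau(\mF,d-1)=|\mF|-\deg(\bbq{\mF,d-1})$ (Corollary~\ref{degrees}) and then invokes \cite[Theorem~4.2]{gruica2020common} for parts~(1) and~(2) and \cite[Proposition~4.4]{gruica2020common} for part~(3). Those are general density results for subspaces avoiding a scalar-closed set of prescribed polynomial size, proved via a bipartite-graph framework; the Ferrers structure plays no role beyond supplying the degree of $\bbq{\mF,d-1}$.

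Your part~(1) is essentially correct and is, in fact, the content of Proposition~\ref{prop:existlb} below. But you dropped a factor of $q-1$: since $B$ is closed under scalar multiplication, every bad $k$-subspace contains at least one bad \emph{line}, i.e.\ at least $q-1$ nonzero bad vectors, so your union-bound count overcounts by at least $q-1$. With that correction the ratio of bad to all subspaces is
\[
\frac{(|B|-1)/(q-1)\cdot\qbin{|\mF|-1}{k-1}{q}}{\qbin{|\mF|}{k}{q}} \;\sim\; c\,q^{\,k-\sigma-1},
\]
which is $o(1)$ for all $k\le\sigma$, boundary case included. No inclusion--exclusion or ``genuine overcount'' argument is needed.

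Parts~(2) and~(3) have a real gap. Your flag/product formula is not well-defined (the quantity you call $n_i(q)$ depends on the flag, not just on $i$), and the Bonferroni route you sketch fails for $k\ge\sigma+2$: the second-order term $\binom{|B^*|}{2}\qbin{|\mF|-2}{k-2}{q}/\qbin{|\mF|}{k}{q}$ grows like $q^{2(k-\sigma-1)}$, which dominates the first-order term and pushes the inequality the wrong way. For part~(3), the same two-term Bonferroni gives only $\limsup\delta_q\le 1-c+c^2/2$ where $c$ is the leading coefficient of $|B^*|/q^{|\mF|-\sigma-1}$; this is $\le 1/2$ only when $c=1$, which need not hold (the leading coefficient of the $q$-rook polynomial at its trailing degree can exceed~$1$). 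None of your alternative suggestions (Poisson limit, parity pairing) produce the stated $1/2$ in general. To close these gaps you would need to either reproduce the bipartite-graph/isolated-vertex machinery of~\cite{gruica2020common} or cite it directly, as the paper does.
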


The proof of Theorem~\ref{thm:sparsedense} relies on~\cite[Theorem 1]{haglund} and on the machinery developed in~\cite{gruica2020common} to estimate the asymptotic density of (non-)isolated vertices in bipartite graphs. We start by introducing the needed terminology.

\begin{definition}\label{def:ball}
The \textbf{ball} of \textbf{radius}
$r$ in $\F_q[\mF]$ is the set
\begin{align*}
    B_q(\mF, r):= \bigcup_{i=0}^r \{M \in \F_q[\mF] : \rk(M)=i\}.
\end{align*}
The cardinality of 
$B_q(\mF, r)$
is denoted by 
$\bbq{\mF, r}$ in the sequel. Note that, by definition, we have
$\bbq{\mF, r} = \sum_{i=0}^r P_q(\mF,i)$, where $P_q(\mF,i)$ is defined in Notation~\ref{notPq}.
\end{definition}

A formula for $\bbq{\mF, r}$ for all $\mF$ and $r$ can be found in~\cite[Theorem 5.3]{gluesing2020partitions}. We will also need the following result.

\begin{corollary} \label{degrees}
Let $\mF$ be an $n \times m$ Ferrers diagram and let $r \ge 0$ be an integer.
Then $P_q(\mF,r)$ and $\bbq{\mF,r}$ are polynomials in $q$ of the same degree. In particular,
$$\tau(\mF,r) = |\mF| - \deg(\bbq{\mF,r}).$$
\end{corollary}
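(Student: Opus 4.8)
The plan is to deduce Corollary~\ref{degrees} from Theorem~\ref{degHag} by showing that adding the lower-rank contributions $P_q(\mF,0),\dots,P_q(\mF,r-1)$ to $P_q(\mF,r)$ does not raise the degree, so that $\bbq{\mF,r}=\sum_{i=0}^r P_q(\mF,i)$ has the same degree as its top-rank summand. Once that is established, the displayed identity is immediate: by Theorem~\ref{degHag} we have $\deg(P_q(\mF,r))=|\mF|-\tau(\mF,r)$, hence $\deg(\bbq{\mF,r})=|\mF|-\tau(\mF,r)$, which rearranges to $\tau(\mF,r)=|\mF|-\deg(\bbq{\mF,r})$.

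So the real content is the inequality $\deg(P_q(\mF,i))\le\deg(P_q(\mF,r))$ for all $0\le i\le r$ (with the convention that the zero polynomial has degree $-\infty$, consistent with $\tau$ of a zero $q$-rook polynomial being $-\infty$). By Theorem~\ref{degHag} this is equivalent to the monotonicity statement $\tau(\mF,i)\ge\tau(\mF,r)$ for $i\le r$, i.e.\ the trailing degree of the $q$-rook polynomial is non-increasing in the number of rooks. First I would handle the degenerate cases: if $P_q(\mF,r)$ is the zero polynomial, there is nothing to prove for the indices $i$ where $P_q(\mF,i)$ is also zero, and for those $i$ where $P_q(\mF,i)\neq 0$ one observes that $\bbq{\mF,r}$ still has a well-defined degree equal to $\max_i\deg(P_q(\mF,i))$; but in fact the cleanest route is to restrict to the regime where $\kappa(\mF,r)\ge 1$, invoke Theorem~\ref{th:trai} to get the closed form $\tau(\mF,r)=\sum_{i=1}^{m+n-1}\max\{0,|D_i\cap\mF|-r\}$, and note that this expression is visibly non-increasing in $r$ (each summand $\max\{0,|D_i\cap\mF|-r\}$ is non-increasing in $r$). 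Combined with the elementary fact that $\kappa(\mF,\cdot)$ is non-increasing --- so $\kappa(\mF,r)\ge 1$ forces $\kappa(\mF,i)\ge 1$, hence $P_q(\mF,i)\neq 0$, for all $i\le r$ --- this gives $\tau(\mF,i)\ge\tau(\mF,r)$ for $0\le i\le r$, which is what we need.

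Putting the pieces together: for $0\le i\le r$ we have $\deg(P_q(\mF,i))=|\mF|-\tau(\mF,i)\le|\mF|-\tau(\mF,r)=\deg(P_q(\mF,r))$ by the monotonicity just shown, with equality at $i=r$. Since the leading coefficients of the $P_q(\mF,i)$ are positive (these are genuine counting polynomials, nonnegative at every prime power and nonzero for all large $q$ in the non-degenerate range), there is no cancellation in the sum $\bbq{\mF,r}=\sum_{i=0}^r P_q(\mF,i)$ at the top degree, so $\deg(\bbq{\mF,r})=\deg(P_q(\mF,r))$. Applying Theorem~\ref{degHag} once more yields $\tau(\mF,r)=|\mF|-\deg(P_q(\mF,r))=|\mF|-\deg(\bbq{\mF,r})$, completing the proof.

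The main obstacle is really just the monotonicity of $\tau(\mF,\cdot)$, i.e.\ justifying that the low-rank terms do not dominate; Theorem~\ref{th:trai} makes this transparent via the diagonal formula, but one must be a little careful about the $\kappa(\mF,r)\ge 1$ hypothesis, since Theorem~\ref{th:trai} is only stated under that assumption — hence the need to check that $\kappa(\mF,i)\ge 1$ propagates downward from $i=r$, and to treat the case $\kappa(\mF,r)=0$ (where $P_q(\mF,r)\equiv 0$ and the asserted identity reads $-\infty=-\infty$ once one notes $\bbq{\mF,r}$ then has the same degree as the largest nonzero $P_q(\mF,i)$, whose trailing degree equals that of $R_q(\mF,r)=0$ only vacuously — so it is cleanest to simply state the corollary, as written, in the range where everything is a bona fide polynomial).
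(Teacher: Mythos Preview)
Your argument is correct but takes a different route from the paper. The paper simply cites \cite[Theorem~7.13]{gluesing2020partitions} for the equality $\deg(P_q(\mF,r))=\deg(\bbq{\mF,r})$ and then applies Theorem~\ref{degHag}. You instead prove this equality internally by establishing the monotonicity $\tau(\mF,i)\ge\tau(\mF,r)$ for $i\le r$ via the diagonal formula of Theorem~\ref{th:trai}, together with positivity of leading coefficients to preclude cancellation in the sum. This is a legitimate and rather elegant alternative: it keeps the argument self-contained within the paper and actually explains \emph{why} the top-rank term dominates, whereas the paper outsources that step. The trade-off is length and the need to track the hypothesis $\kappa(\mF,r)\ge 1$ of Theorem~\ref{th:trai} (your observation that $\kappa(\mF,\cdot)$ is non-increasing handles this cleanly for $1\le i\le r$; the case $i=0$ is trivial since $\deg(P_q(\mF,0))=0$).

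One small remark: your closing discussion of the degenerate case $\kappa(\mF,r)=0$ is a bit tangled, and in fact the corollary as stated (for arbitrary $r\ge 0$) is problematic there, since $P_q(\mF,r)=0$ has degree $-\infty$ while $\bbq{\mF,r}$ does not. This is not a flaw in your reasoning---the issue is already present in the statement itself, and the paper's proof via citation does not address it either. In practice the corollary is only invoked under $\kappa(\mF,d)\ge 1$ (see the proof of Theorem~\ref{thm:sparsedense}), so you could simply restrict to that range and drop the last paragraph.
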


\begin{proof}
It is well known that $P_q(\mF,r)$ is a polynomial in $q$; see e.g.~\cite{haglund,gluesing2020partitions}. Therefore, by definition,
$\bbq{\mF,r}$ is a polynomial in $q$ as well. The fact that $P_q(\mF,r)$ and $\bbq{\mF,r}$ have the same degree follows from~\cite[Theorem 7.13]{gluesing2020partitions}. The last identity in the statement directly follows from Theorem~\ref{degHag}.
\end{proof}

We are now ready to establish Theorem~\ref{thm:sparsedense}.

\begin{proof}[Proof of Theorem~\ref{thm:sparsedense}]
By~\cite[Theorem 4.2 (1)]{gruica2020common}, if $\deg(\bbq{\mF,d-1}) \le |\mF|-k$ then we have $\lim_{q\to +\infty}\, \delta_q(\mF,k,d) = 1.$  From Corollary~\ref{degrees} it follows that we have $\deg(\bbq{\mF,d-1}) \le |\mF|-k$ if and only if $k \le \tau(\mF,d-1).$ Analogously, one can use~\cite[Theorem 4.2 (2)]{gruica2020common} to show that if $k \ge  \tau(\mF,d-1)+2$, then $\lim_{q\to +\infty}\, \delta_q(\mF,k,d) = 0.$ Finally, by~\cite[Proposition 4.4]{gruica2020common} we have $\limsup_{q\to +\infty}\, \delta_q(\mF,k,d) \le 1/2$ when $k = \tau(\mF,d-1)+1$.
\end{proof}

Theorem~\ref{thm:sparsedense} generalizes the sparseness of MRD codes established in~\cite{gruica2020common}, as the following remark illustrates.

\begin{remark}
In the special case where $\mF=[m,\dots,m]$ is an $n \times m$ Ferrers diagram, the optimal $[\mF,d]_q$-spaces correspond exactly to $n \times m$ MRD codes of minimum distance $d$. Note that for $n \ge 2$ we always have $m(n-d+1)=\kappa(\mF,d) \ge  \sum_{i=1}^{m+n-1}\max\{0,|D_i \cap \mF|-d+1\}+2=(m-d+1)(n-d+1)+2$. 
Therefore, by Theorem~\ref{th:trai},  Theorem~\ref{thm:sparsedense} generalizes the sparseness of MRD codes established in~\cite{gruica2020common}.
\end{remark}

Note that in the third scenario of Theorem~\ref{thm:sparsedense} 
we do not have sparseness in general (even though we have non-density). The following example based on~\cite{antrobus2019maximal} shows that
$\delta_q(\mF,k,d)$ can indeed converge to a positive constant when $k=\tau(\mF,d-1)$.

\begin{example}
Consider the $2 \times m$ Ferrers diagram $\mF=[2,\dots, 2]$ with $m \ge 2$ and let $d=2$.
We have $\kappa(\mF,2) = m$ and, by Theorem~\ref{th:trai},  $\tau(\mF,1)=\sum_{i=1}^{m+1}\max\{0,|D_i \cap \mF|-1\}=m-1$. It follows from \cite[Corollary VII.6]{antrobus2019maximal} that $$0<\lim_{q\to +\infty}\, \delta_q(\mF,m,2) = \left(\sum_{j=0}^m \frac{(-1)^j}{j!}\right)^{(d-1)(n-d+1)} <1.$$ In particular, optimal $[\mF,2]_q$-spaces are neither sparse, nor dense.
\end{example}

Combining 
Theorem~\ref{prop:newmdsconstr}
with 
Theorem~\ref{thm:sparsedense} we obtain the following corollary, which shows that
optimal $[\mF,d]_q$-spaces are dense as $q \to +\infty$ if and only if the pair $(\mF,d)$ is MDS-constructible. This also  answers~\cite[Open Problem~(a)]{antrobus2019maximal}.

\begin{corollary} \label{cor:ans}
Let $\mF$ be an $n \times m$ Ferrers diagram and let $1 \le d\le n$ be an integer with $\kappa(\mF,d)\ge 1$. 
The following are equivalent:
\begin{enumerate}
    \item $\lim_{q \to +\infty} \delta(\mF,\kappa(\mF,d),d) =1$;
    \item $(\mF,d)$ is MDS-constructible.
\end{enumerate}
\end{corollary}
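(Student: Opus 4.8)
The plan is to derive Corollary~\ref{cor:ans} by combining the two main structural results already at our disposal: the rook-theoretic characterization of MDS-constructible pairs and the asymptotic dichotomy governed by the trailing degree. First I would note that the case $d=1$ is trivial: by Remark~\ref{rmk:symm}(2) we have $\kappa(\mF,1)=|\mF|$, the only $|\mF|$-dimensional subspace of $\F_q[\mF]$ is $\F_q[\mF]$ itself, so $\delta_q(\mF,|\mF|,1)=1$ for every $q$, and the pair $(\mF,1)$ is always MDS-constructible in the sense of Definition~\ref{def:updates}; hence both statements hold. So I would immediately reduce to $2 \le d \le n$.

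For the main range $2 \le d \le n$, the plan is as follows. By Corollary~\ref{cor:main}, the pair $(\mF,d)$ is MDS-constructible if and only if $\kappa(\mF,d) = \tau(\mF,d-1)$. On the other hand, by Proposition~\ref{thm:mdsbound} combined with Theorem~\ref{th:trai}, we always have $\kappa(\mF,d) \ge \tau(\mF,d-1)$, so exactly one of two situations occurs: either $\kappa(\mF,d) = \tau(\mF,d-1)$ (the MDS-constructible case), or $\kappa(\mF,d) \ge \tau(\mF,d-1)+1$ (the non-MDS-constructible case). Now apply Theorem~\ref{thm:sparsedense} with $k = \kappa(\mF,d)$. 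In the first case $k = \tau(\mF,d-1)$, so part~(1) of Theorem~\ref{thm:sparsedense} gives $\lim_{q\to+\infty}\delta_q(\mF,\kappa(\mF,d),d) = 1$. In the second case either $k = \tau(\mF,d-1)+1$, in which case part~(3) gives $\limsup_{q\to+\infty}\delta_q(\mF,\kappa(\mF,d),d) \le 1/2 < 1$, or $k \ge \tau(\mF,d-1)+2$, in which case part~(2) gives the limit is $0$; either way the limit (if it exists) is not $1$, and more to the point the density does not tend to $1$. This yields both implications of the equivalence.

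One small bookkeeping point I would be careful about: Corollary~\ref{cor:ans} as stated asserts an equivalence between ``$\lim_{q\to+\infty}\delta = 1$'' and ``$(\mF,d)$ is MDS-constructible''. The implication ``MDS-constructible $\Rightarrow$ limit is $1$'' is immediate from Theorem~\ref{thm:sparsedense}(1) as above. For the converse I would argue contrapositively: if $(\mF,d)$ is not MDS-constructible then $\kappa(\mF,d) \ge \tau(\mF,d-1)+1$, so by parts~(2) and~(3) of Theorem~\ref{thm:sparsedense} we have $\limsup_{q\to+\infty}\delta_q(\mF,\kappa(\mF,d),d) \le 1/2$, hence the limit is not $1$. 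This handles the logical direction cleanly without needing to know that the limit exists in the boundary case.

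Honestly, there is no real obstacle here; the corollary is essentially a corollary, assembling Corollary~\ref{cor:main}, Proposition~\ref{thm:mdsbound}, Theorem~\ref{th:trai}, and Theorem~\ref{thm:sparsedense}. The only thing requiring a moment of care is matching the hypotheses: Corollary~\ref{cor:main} is stated for $1 \le d \le \min\{n,m\}$ with $\kappa(\mF,d)\ge 1$, which is exactly the setting of Corollary~\ref{cor:ans} (here $m \ge n$ so $\min\{n,m\}=n$), whereas Theorem~\ref{thm:sparsedense} additionally requires $d \ge 2$; so the $d=1$ case must be peeled off and dispatched separately as above, and for $d\ge 2$ everything lines up.
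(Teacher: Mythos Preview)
Your proposal is correct and follows essentially the same route as the paper, which simply says the corollary is obtained ``combining Theorem~\ref{prop:newmdsconstr} with Theorem~\ref{thm:sparsedense}''. You are in fact more careful than the paper: you explicitly peel off the $d=1$ case (where Theorem~\ref{thm:sparsedense} does not apply), you invoke Corollary~\ref{cor:main} together with Proposition~\ref{thm:mdsbound} and Theorem~\ref{th:trai} to get the dichotomy $\kappa(\mF,d)=\tau(\mF,d-1)$ versus $\kappa(\mF,d)\ge\tau(\mF,d-1)+1$, and you handle the boundary case $k=\tau(\mF,d-1)+1$ via the $\limsup\le 1/2$ bound rather than assuming the limit exists.
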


We conclude this section with the following observation.

\begin{remark}
Corollary~\ref{cor:ans} shows 
that the Etzion-Silberstein Conjecture 
holds whenever the pair~$(\mF,d)$ is MDS-constructible and $q$ is large, with a proof that does not depend on the construction of~\cite{roth1991maximum,etzion2016optimal,gorla2017subspace}.
\end{remark}


\section{Existence Results}
\label{sec:existence}
\label{sec:4}

In this very short section we show 
how some (new) instances of the
Etzion-Silberstein Conjecture can be established in a non-constructive way
using a purely combinatorial argument.
We will obtain existence of optimal $[\mF,d]_q$-spaces for some MDS-constructible pairs $(\mF,d)$ and for values of $q$ that are smaller than the minimum value needed in Theorem~\ref{construc}.

\begin{proposition} \label{prop:existlb}
Let $\mF$ be an $n \times m$ Ferrers diagram and let $2 \le d \le \min\{n,m\}$ and $1 \le k \le \kappa(\mF,d)$ be integers. 
There are at least
\begin{align} \label{eq:lb}
    \qbin{|\mF|}{k}{q}-\frac{\bbq{\mF, d-1}-1}{q-1}\qbin{|\mF|-1}{k-1}{q}
\end{align}
$k$-dimensional $[\mF,d]_q$-spaces. In particular, if the expression in~\eqref{eq:lb} is larger than 0, then such a space exists.
\end{proposition}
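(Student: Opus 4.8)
The plan is to count, via a double-counting / averaging argument over nonzero vectors, how many $k$-dimensional subspaces of $\F_q[\mF]$ can fail to be $[\mF,d]_q$-spaces, and then subtract. First I would observe that a $k$-dimensional subspace $\mC \le \F_q[\mF]$ fails to be an $[\mF,d]_q$-space precisely when it contains a nonzero matrix $M$ of rank at most $d-1$, i.e. a nonzero element of $B_q(\mF,d-1)$. So the set of "bad" subspaces is $\bigcup_{M} \{\mC : \dim \mC = k, \ M \in \mC\}$, where $M$ ranges over the nonzero matrices in $B_q(\mF,d-1)$. The number of nonzero such matrices is $\bbq{\mF,d-1}-1$, and these split into $(\bbq{\mF,d-1}-1)/(q-1)$ projective points (lines through the origin), since rank is constant on a punctured line $\F_q^\ast M$.

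Next I would use the standard fact that, for a fixed nonzero matrix $M \in \F_q[\mF]$, the number of $k$-dimensional subspaces of the $|\mF|$-dimensional space $\F_q[\mF]$ containing $M$ equals the number of $(k-1)$-dimensional subspaces of the quotient $\F_q[\mF]/\langle M\rangle$, which is $\qbin{|\mF|-1}{k-1}{q}$. By the union bound, the number of bad $k$-dimensional subspaces is at most the number of projective points in $B_q(\mF,d-1)$ times $\qbin{|\mF|-1}{k-1}{q}$, i.e. at most $\frac{\bbq{\mF,d-1}-1}{q-1}\qbin{|\mF|-1}{k-1}{q}$. (Here it is important to count projective points rather than individual nonzero matrices, because the subspace-counting above only depends on the line $\langle M\rangle$; counting matrices would overcount the union bound by a factor of $q-1$ and give a weaker, often useless, estimate.) Subtracting this from the total number $\qbin{|\mF|}{k}{q}$ of $k$-dimensional subspaces of $\F_q[\mF]$ yields the lower bound~\eqref{eq:lb}, and the final sentence is immediate: if~\eqref{eq:lb} is positive, at least one good subspace exists. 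The hypothesis $k \le \kappa(\mF,d)$ just guarantees that the statement is not vacuous in the sense that we are asking for a dimension not already excluded by Theorem~\ref{thm:dimbound}; it is not strictly needed for the inequality itself.

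The only mild subtlety — and the one place to be careful — is the passage from "nonzero matrices of rank $\le d-1$" to "projective points", i.e. verifying that rank is invariant under nonzero scalar multiplication and that two matrices span the same line iff they are scalar multiples, so that the punctured lines partition the nonzero elements of $B_q(\mF,d-1)$ into exactly $(\bbq{\mF,d-1}-1)/(q-1)$ classes; this also tacitly uses that $\bbq{\mF,d-1}-1$ is divisible by $q-1$, which follows from this very partition. Everything else is the union bound plus the elementary subspace count $\qbin{|\mF|-1}{k-1}{q}$ for subspaces through a fixed point, so there is no real obstacle; the content is entirely in choosing the right object (lines, not vectors) to apply the union bound to.
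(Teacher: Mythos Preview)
Your proposal is correct and essentially identical to the paper's own proof: the paper also passes to projective representatives $\mathfrak{B}(\mF,d-1)$ of size $(\bbq{\mF,d-1}-1)/(q-1)$, double counts pairs $(\mC,B)$ with $B \in \mC$ using the count $\qbin{|\mF|-1}{k-1}{q}$ for subspaces through a fixed point, and subtracts from the total. Your ``union bound'' phrasing is exactly the paper's double-counting inequality, and your remark that the hypothesis $k \le \kappa(\mF,d)$ is not needed for the inequality itself is also accurate.
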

\begin{proof}
The $[\mF,d]_q$-spaces of dimension $k$ are
the $k$-dimensional subspaces of $\F_q[\mF]$ that intersect $\bbq{\mF, d-1}$ trivially, i.e., in $\{0\}$. We denote by $\mathfrak{B}(\mF, d-1)$ a set of representatives (up to $\F_q$-multiples) of the nonzero elements of $\bbq{\mF, d-1}$. We have $|\mathfrak{B}(\mF, d-1)| = (\bbq{\mF, d-1}-1)/(q-1)$. Counting the cardinality of the set 
\begin{align*}
    \{(\mC,B) \, : \,  B \in \mathfrak{B}(\mF, d-1), \; \mC \le \F_q[\mF], \; \dim(\mC)=k, \; B \in \mC\}
\end{align*}
in two ways we obtain
\begin{align*}
    |\mathfrak{B}(\mF, d-1)| \qbin{|\mF|-1}{k-1}{q} &= \sum_{B \in \mathfrak{B}(\mF, d-1)} |\{\mC \le \F_q[\mF] \, : \,  \dim(\mC)=k, \; B \in \mC\}| \\
    & = \sum_{\substack{\mC \le \F_q[\mF], \\ \dim(\mC)=k}} |\{B \in \mathfrak{B}(\mF, d-1) \, : \,  B \in \mC\}| \\
    &\ge |\{\mC \le \F_q[\mF] \, : \, \dim(\mC)=k, \, \mbox{$\mC$ is not an $[\mF,d]_q$-space}\}|.
\end{align*}
The lower bound in the statement is a direct consequence of the above inequality and of the definition of $q$-binomial coefficient.
\end{proof}

\begin{example} \label{ex:easyexist}
Consider the $5 \times 6$ Ferrers diagram in Figure~\ref{F-233345}.
We have $|\mF|=20$ and for $d=4$ the pair $(\mF,d)$ is MDS-constructible. By the construction described in Remark~\ref{rem:constr}, we know that for $q \ge \max\{|D_i \cap \mF| : 1 \le i \le 6\}-1 = 4$ there exists an optimal $[\mF,d]_q$-space 
of dimension $\kappa(\mF,4)=3$. However,
for $q=3$ we have
$\bbq{\mF,3} = 243679185$
(where we computed the value using \cite[Theorem 5.3]{gluesing2020partitions}) and thus 
by Proposition~\ref{prop:existlb} we have 
\begin{align*}
\qbin{20}{3}{3}-\frac{(243679185-1)}{2}\qbin{19}{2}{3} = 345241120940998775695104,
\end{align*}
meaning that there are at least 345241120940998775695104 optimal 
$[\mF,d]_q$-spaces already for $q=3$.
The lower bound for $q=2$ is however negative (its value is $-6510288900541266$).
\end{example}

     \begin{figure}[ht] 
    \centering
     {\small
     \begin{tikzpicture}[scale=0.4]
         \draw (5.5,1.5) node (b1) [label=center:$\bullet$] {};
         \draw (5.5,5.5) node (b1) [label=center:$\bullet$] {};
         \draw (5.5,4.5) node (b1) [label=center:$\bullet$] {}; 
         \draw (5.5,3.5) node (b1) [label=center:$\bullet$] {};
         \draw (5.5,2.5) node (b1) [label=center:$\bullet$] {};

         \draw (4.5,2.5) node (b1) [label=center:$\bullet$] {};
         \draw (4.5,5.5) node (b1) [label=center:$\bullet$] {};
         \draw (4.5,4.5) node (b1) [label=center:$\bullet$] {}; 
         \draw (4.5,3.5) node (b1) [label=center:$\bullet$] {};
         
         \draw (3.5,3.5) node (b1) [label=center:$\bullet$] {};
         \draw (3.5,5.5) node (b1) [label=center:$\bullet$] {};
         \draw (3.5,4.5) node (b1) [label=center:$\bullet$] {}; 

         \draw (2.5,3.5) node (b1) [label=center:$\bullet$] {};
         \draw (2.5,5.5) node (b1) [label=center:$\bullet$] {};
         \draw (2.5,4.5) node (b1) [label=center:$\bullet$] {};   
         
         \draw (1.5,3.5) node (b1) [label=center:$\bullet$] {};
         \draw (1.5,5.5) node (b1) [label=center:$\bullet$] {};
         \draw (1.5,4.5) node (b1) [label=center:$\bullet$] {};
         
         \draw (0.5,5.5) node (b1) [label=center:$\bullet$] {};
         \draw (0.5,4.5) node (b1) [label=center:$\bullet$] {};

     \end{tikzpicture}
     }
     \caption{The Ferrers diagram $\mF=[2,3,3,3,4,5]$ for Example~\ref{ex:easyexist}.}
     \label{F-233345}
     \end{figure}
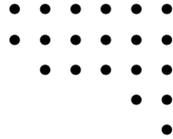

Table~\ref{table_exist} collects some examples of parameter sets
for which 
Proposition~\ref{prop:existlb} gives existence of optimal $[\mF,d]$-spaces for small values of $q$.

\begin{table}[h!]
    \centering
    \renewcommand\arraystretch{1.2}
 \begin{tabular}{|c|c|c|c|c|c|} 
 \hline
 $\mF$ & $d$ & $\kappa(\mF,d)$ & $q$ & lower bound  & constructible as in Remark~\ref{rem:constr}\\\noalign{\global\arrayrulewidth 1.8pt}
    \hline
    \noalign{\global\arrayrulewidth0.4pt}
  $[ 2, 3, 4, 4, 4, 5, 6 ]$ & 5& 3 & 3 & $1.06 \cdot 10^{33}$ & for $q \ge 4$\\ 
  \hline 
  $[ 2, 2, 2, 2, 3, 4, 5, 6, 7 ]$ & 3& 15 & 3  & $1.79 \cdot 10^{128}$ & for $q \ge 7$\\ 
  \hline 
  $[ 2, 3, 4, 4, 5, 5, 5, 5, 5, 7 ]$ & 6 & 2 & 2  & $2.92 \cdot 10^{24}$ & for $q \ge 5$\\ 
  \hline 
  $[ 1, 3, 4, 6, 6, 6, 6, 7, 8 ]$ & 7 & 3 & 4  & $1.1 \cdot 10^{79}$ & for $q \ge 7$\\ 
  \hline 
  $[ 4, 6, 6, 6, 7, 7, 7, 8, 9 ]$ & 8 & 3 & 5  & $6.19 \cdot 10^{118}$ & for $q \ge 8$\\ 
  \hline
\end{tabular}
\caption{Examples illustrating Proposition~\ref{prop:existlb}.
Note that the bounds on $q$ in the last column of the table are based on the assumption that the MDS conjecture holds, i.e. that there exists an $[n,k]_q$ MDS code if and only if $n \le q+1$ for all prime powers $q$ and integers $2 \le k \le q-1$, except when $q$ is even and $k\in \{3,q-1\}$, in which case $n\le q+2$; see~\cite{segre1955curve}.}
    \label{table_exist}
\end{table}

Even though optimal $[\mF,d]_q$-spaces are dense when $(\mF,d)$ is an MDS-constructible pair (Theorem~\ref{thm:sparsedense}),
we conclude this section by showing that the optimal spaces actually obtained from MDS codes via the construction illustrated in  Remark~\ref{rem:constr} are very few.

\begin{notation} \label{not:mdsconstr}
Let $\mF$ be an $n \times m$ Ferrers diagram with $m \ge n$ and let $2 \le d \le n$ be an integer for which $(\mF,d)$ is MDS-constructible. We denote by 
$\mbox{MC}(\mF,d)$
the set of codes constructed as in Remark~\ref{rem:constr}.
\end{notation}

\begin{proposition} \label{thm:lbmdscon}
Let $\mF$ be an $n \times m$ Ferrers diagram with $m \ge n$ and let $2 \le d \le n$ be an integer such that $(\mF,d)$ is MDS-constructible. We have  
\begin{align*}
    |\mbox{MC}(\mF,d)| \le \prod_{j=1}^{\ell} \qbin{n_{i_j}}{n_{i_j}-d+1}{q},
\end{align*}
where the set $I=\{i_1,\dots,i_{\ell}\}$ is defined as in Remark~\ref{rem:constr} and $n_{i_j}=|D_{i_j} \cap \mF|$  for all $i_j \in I$.
\end{proposition}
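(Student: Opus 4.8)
The plan is to bound $|\mbox{MC}(\mF,d)|$ by counting how many distinct spaces $\mC$ can arise from the construction of Remark~\ref{rem:constr}, in which one picks, for each $i_j \in I$, a linear MDS code $C_{i_j} \le \F_q^{n_{i_j}}$ of minimum distance $d$ (equivalently of dimension $n_{i_j}-d+1$) and forms $\mC = \{M(x_{i_1},\dots,x_{i_\ell}) : (x_{i_1},\dots,x_{i_\ell}) \in C_{i_1} \times \cdots \times C_{i_\ell}\}$. The first step is the observation that the map sending the tuple $(C_{i_1},\dots,C_{i_\ell})$ to the resulting space $\mC$ is at most as large in image as the number of such tuples. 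Therefore
\begin{align*}
|\mbox{MC}(\mF,d)| \le \prod_{j=1}^{\ell} \#\{ C \le \F_q^{n_{i_j}} : C \text{ is MDS of minimum distance } d \}.
\end{align*}
The second step is then simply to bound the number of MDS $[n_{i_j}, n_{i_j}-d+1]_q$ codes by the total number of subspaces of $\F_q^{n_{i_j}}$ of that dimension, namely $\qbin{n_{i_j}}{n_{i_j}-d+1}{q}$. Combining the two inequalities yields the claimed bound.

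Making the first step rigorous requires a clean argument that the space $\mC$ is determined by the tuple $(C_{i_1},\dots,C_{i_\ell})$, or more precisely that each $\mC$ in $\mbox{MC}(\mF,d)$ is counted by the product. Here I would use the block-diagonal structure: the positions indexed by the distinct diagonals $D_{i_1}\cap\mF, \dots, D_{i_\ell}\cap\mF$ are pairwise disjoint subsets of $\mF$, and for a matrix $M \in \mC$ the coordinates in the positions indexed by $D_{i_j}\cap\mF$ range over precisely $C_{i_j}$ (and independently across $j$). Hence $\mC$ can be recovered from the tuple as the direct sum of the $C_{i_j}$'s embedded in the respective diagonal positions, so the assignment $(C_{i_1},\dots,C_{i_\ell}) \mapsto \mC$ is injective, and a fortiori the image has size at most $\prod_j \#\{C_{i_j}\}$. (Strictly, injectivity already gives equality on this count, but for the inequality in the statement we only need surjectivity of this map onto $\mbox{MC}(\mF,d)$, which is immediate from Notation~\ref{not:mdsconstr}.)

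I do not expect a serious obstacle here; the statement is essentially a bookkeeping estimate, and the only thing to be careful about is phrasing the correspondence between tuples of component codes and the constructed spaces so that the count is transparent. If one wanted the sharper claim that the inequality is an equality one would need to check that different tuples give different spaces, which follows from the disjointness of the diagonal supports as sketched above, but since the proposition only asserts ``$\le$'' this refinement is not needed. The one minor point worth stating explicitly is that a linear MDS code of length $n_{i_j}$ and minimum distance $d$ has dimension $n_{i_j}-d+1$ by the Singleton bound being met with equality, which justifies using the exponent $n_{i_j}-d+1$ in the $q$-binomial coefficient rather than $d-1$.
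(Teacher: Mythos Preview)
Your proposal is correct and follows essentially the same approach as the paper: the paper's proof is a one-liner that overestimates the number of MDS codes in $\F_q^{n_{i_j}}$ of minimum distance $d$ by the number of all $(n_{i_j}-d+1)$-dimensional subspaces, for each $i_j \in I$. Your write-up is more careful than the paper's, correctly noting that only surjectivity of the map $(C_{i_1},\dots,C_{i_\ell}) \mapsto \mC$ onto $\mbox{MC}(\mF,d)$ is needed for the stated inequality, and that this is immediate from the definition in Notation~\ref{not:mdsconstr}.
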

\begin{proof}
The statement of the theorem easily follows by overestimating the number of linear MDS codes in $\smash{\F_q^{n_{i_j}}}$ of minimum distance $d$ by the number of linear spaces of dimension $n_{i_j}-d+1$ over $\F_q$, for all $i_j \in I$.
\end{proof}

The following result gives the asymptotic density of optimal $[\mF,d]$-spaces constructed as in Remark~\ref{rem:constr} within the set of spaces with the same parameters, where we make use of Theorem~\ref{thm:sparsedense}.

\begin{corollary} \label{cor:constrsparse}
Let $\mF$ be an $n \times m$ Ferrers diagram with $m \ge n$ and let $1 \le d \le n$ be an integer such that
$(\mF,d)$ is MDS-constructible.
Let $\kappa=\kappa(\mF,d)$. We have
\begin{align*}
    \frac{|\mbox{MC}(\mF,d)|}{\qbin{|\mF|}{\kappa}{q}\delta_q(\mF,\kappa,d)} \in O\left(q^{-\kappa(|\mF|-\kappa-d+1)}\right) \quad \textnormal{as $q \to +\infty$}.
\end{align*}
\end{corollary}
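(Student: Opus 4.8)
The plan is to combine the upper bound on $|\mbox{MC}(\mF,d)|$ from Proposition~\ref{thm:lbmdscon} with the asymptotic size of $q$-binomial coefficients and the characterization $\kappa = \tau(\mF,d-1)$ that holds for MDS-constructible pairs. First I would deal with the degenerate case $d=1$: here $\kappa(\mF,d) = |\mF|$, so the exponent $\kappa(|\mF|-\kappa-d+1)$ equals $0$ and the claim is that the ratio is $O(1)$; this should follow since $\delta_q(\mF,\kappa,1) = 1$ (the ambient space itself works) and $\binom{|\mF|}{\kappa}_q = 1$, while $|\mbox{MC}(\mF,1)|$ is bounded. For the main case $2 \le d \le n$, I would apply Proposition~\ref{thm:lbmdscon}, which gives
\[
|\mbox{MC}(\mF,d)| \le \prod_{j=1}^{\ell} \qbin{n_{i_j}}{n_{i_j}-d+1}{q} \sim \prod_{j=1}^{\ell} q^{(n_{i_j}-d+1)(d-1)} = q^{(d-1)\sum_{j=1}^\ell (n_{i_j}-d+1)} = q^{(d-1)\kappa},
\]
where the last equality uses that $(\mF,d)$ is MDS-constructible, so that $\sum_{j=1}^\ell (n_{i_j}-d+1) = \sum_{i=1}^{m+n-1} \max\{0,|D_i\cap\mF|-d+1\} = \kappa$ by Definition~\ref{def:updates}.

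Next I would handle the denominator. By~\eqref{eq:qbin} we have $\qbin{|\mF|}{\kappa}{q} \sim q^{\kappa(|\mF|-\kappa)}$. For the factor $\delta_q(\mF,\kappa,d)$, since $(\mF,d)$ is MDS-constructible and $\kappa(\mF,d)\ge 1$, Corollary~\ref{cor:main} (equivalently Theorem~\ref{prop:newmdsconstr} together with Theorem~\ref{th:trai}) gives $\kappa = \tau(\mF,d-1)$; hence $k := \kappa$ satisfies $k \le \tau(\mF,d-1)$, and part (1) of Theorem~\ref{thm:sparsedense} yields $\lim_{q\to+\infty} \delta_q(\mF,\kappa,d) = 1$ (this is also exactly Corollary~\ref{cor:ans}). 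Therefore $\delta_q(\mF,\kappa,d) = \Theta(1)$ and the whole denominator is $\Theta(q^{\kappa(|\mF|-\kappa)})$.

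Putting the two estimates together, the ratio is
\[
\frac{|\mbox{MC}(\mF,d)|}{\qbin{|\mF|}{\kappa}{q}\,\delta_q(\mF,\kappa,d)} \in O\!\left(\frac{q^{(d-1)\kappa}}{q^{\kappa(|\mF|-\kappa)}}\right) = O\!\left(q^{\kappa(d-1) - \kappa(|\mF|-\kappa)}\right) = O\!\left(q^{-\kappa(|\mF|-\kappa-d+1)}\right),
\]
which is the claimed bound. I do not anticipate a genuine obstacle here; the statement is essentially a bookkeeping consequence of results already established. The one point requiring mild care is making sure the $d=1$ boundary case (allowed by the hypothesis $1 \le d \le n$ in the corollary, but excluded in Proposition~\ref{thm:lbmdscon} and Notation~\ref{not:mdsconstr}) is either handled separately or the convention $\mbox{MC}(\mF,1)$ is pinned down; since the exponent is then $0$, the $O$-statement is trivially true as long as $\delta_q$ is bounded away from $0$, which holds because $\F_q[\mF]$ is itself an optimal $[\mF,1]_q$-space. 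A second minor point is to note that the asymptotic equivalences $\qbin{a}{b}{q}\sim q^{b(a-b)}$ are enough to turn $\le$-bounds into $O$-bounds, so no finer control is needed.
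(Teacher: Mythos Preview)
Your proposal is correct and follows essentially the same route as the paper: bound $|\mbox{MC}(\mF,d)|$ via Proposition~\ref{thm:lbmdscon}, use the asymptotic $\qbin{a}{b}{q}\sim q^{b(a-b)}$ on each factor together with MDS-constructibility to obtain $|\mbox{MC}(\mF,d)|\in O(q^{(d-1)\kappa})$, invoke $\delta_q(\mF,\kappa,d)\to 1$ from Theorem~\ref{thm:sparsedense}, and divide by $\qbin{|\mF|}{\kappa}{q}\sim q^{\kappa(|\mF|-\kappa)}$. Your extra attention to the $d=1$ boundary case is a nice touch, since the paper's proof tacitly relies on Proposition~\ref{thm:lbmdscon}, which is stated only for $d\ge 2$.
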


\begin{proof}
We combine the upper bound for $|\mbox{MC}(\mF,d)|$ in Proposition~\ref{thm:lbmdscon} with the fact that if~$(\mF,d)$ is MDS-constructible then we have $\lim_{q \to +\infty} \delta_q(\mF,\kappa,d) = 1$;
see Theorem~\ref{thm:sparsedense}. We let $I=\{i_1,\dots,i_{\ell}\}$ and $n_{i_j}=|D_{i_j} \cap \mF|$ for all $i_j \in I$ as in Proposition~\ref{thm:lbmdscon}. It holds that 
\begin{align*}
    \qbin{n_{i_j}}{n_{i_j}-d+1}{q} \sim q^{(n_{i_j}-d+1)(d-1)} \quad \textnormal{as $q \to +\infty$}
\end{align*}
for all $i_j \in I$. Therefore we have $\smash{|\mbox{MC}(\mF,d)| \in O\left(q^{(d-1)\kappa}\right)}$ as $q \to +\infty$,
where we used the fact that  $\smash{\sum_{j=1}^{\ell}(n_{i_j}-d+1)(d-1) = (d-1)\kappa}$, since $(\mF,d)$ is MDS-constructible. Therefore, we have 
\begin{equation*}
    \frac{|\mbox{MC}(\mF,d)|}{\qbin{|\mF|}{\kappa}{q}} \in O\left(q^{-\kappa(|\mF|-\kappa-d+1)}\right) \quad \textnormal{ as $q \to +\infty$}, 
\end{equation*}
which concludes the proof.
\end{proof}

Therefore, only a very small fraction of optimal $[\mF,d]_q$-spaces can be obtained using the construction of Remark~\ref{rem:constr}
in the case where $(\mF,d)$ is MDS-constructible and $q$ is large.

\section{Counting MDS-Constructible Pairs}
\label{sec:5}
Since MDS-constructible pairs play a crucial role in the theory of $[\mF,d]_q$-spaces, it is natural to ask how many there are for a given board size $n \times m$ and whether or not most pairs~$(\mF,d)$ are MDS-constructible.
In this section we will completely answer this question for $d=2$ and for $d=3$ when $n=m$.
In the case $d=2$, the number of MDS-constructible pairs $(\mF,d)$ will be  given by 
\textit{Catalan numbers}; see~\cite{stanley2015catalan}.

\begin{proposition} \label{prop:countfs}
The number of $n \times m$ Ferrers diagrams is $\smash{\binom{m+n-2}{n-1}}$.
\end{proposition}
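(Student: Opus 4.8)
The plan is to set up a bijection between $n \times m$ Ferrers diagrams and a combinatorial object that is manifestly counted by $\binom{m+n-2}{n-1}$. Recall that an $n \times m$ Ferrers diagram is encoded by its column-height vector $[c_1, \dots, c_m]$ with $1 \le c_1 \le c_2 \le \dots \le c_m = m$; wait — one must be careful, since the definition forces $c_m = m$ only when the board is $n \times m$ with the top-right corner $(1,m)$ and bottom-left $(n,m)$ both present, which pins $c_m = n$ (the full last column), not $m$. So the correct constraint is $1 \le c_1 \le c_2 \le \dots \le c_m = n$. First I would record this and observe that the data of a Ferrers diagram is exactly a weakly increasing sequence of $m$ integers in $\{1, \dots, n\}$ whose last term equals $n$.

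Next I would reduce to a standard lattice-path count. A weakly increasing sequence $1 \le c_1 \le \dots \le c_{m-1} \le c_m = n$ is equivalent to a weakly increasing sequence $c_1 \le \dots \le c_{m-1}$ of $m-1$ terms in $\{1, \dots, n\}$ (the last term being forced), i.e. a multiset of size $m-1$ drawn from an $n$-element set, of which there are $\binom{n + (m-1) - 1}{m-1} = \binom{n+m-2}{m-1} = \binom{m+n-2}{n-1}$. Equivalently, and perhaps more transparently for the reader, I would phrase it as a monotone lattice path: the boundary of the Ferrers diagram inside the $n \times m$ box is a staircase path from the top-left corner to the bottom-right corner using $m-1$ unit steps in one direction and $n-1$ in the other (the corner steps being forced by conditions (1)–(3) of the definition), and the number of such paths is the binomial coefficient $\binom{(m-1)+(n-1)}{n-1} = \binom{m+n-2}{n-1}$.

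The main (very minor) obstacle is bookkeeping the forced corners correctly: conditions (1)–(3) in the definition of a Ferrers diagram already fix that row $1$ is full (length $m$) and column $m$ is full (length $n$), so only $m-1$ free choices remain, and I must make sure the off-by-one in the multiset/path count lands on $\binom{m+n-2}{n-1}$ rather than $\binom{m+n-1}{n}$ or similar. Once the encoding and the forced-corner reduction are stated cleanly, the count itself is the textbook ``stars and bars'' / monotone lattice path formula and requires no further work.
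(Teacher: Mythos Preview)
Your proposal is correct and takes essentially the same approach as the paper: both establish the bijection between $n \times m$ Ferrers diagrams and monotone lattice paths with $n-1$ down-steps and $m-1$ right-steps (you additionally phrase it as a stars-and-bars count on the column-height sequence, which is the same bijection read differently). Your correction $c_m = n$ (rather than $c_m = m$) is right and matches what the paper's path argument actually uses.
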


\begin{proof}
We consider an $(n-1) \times (m-1)$ grid associated to $\mF$ as in Figure~\ref{F-path}.
Then $n \times m$ Ferrers diagrams are in bijection with
the paths on the grid from the top left corner to the bottom right corner. Each such path can be seen as a tuple $\smash{(s_1, \dots, s_{m+n-2})}$, where each $s_i$ corresponds to either a $D$ or to an $R$, where $D$ means ``down'' and $R$ means ``right''. Since there are $n-1$ $D$'s and $m-1$ $R$'s in total, there are $\smash{\binom{m+n-2}{n-1}}$ such sequences.
\end{proof}

\begin{notation}
In the sequel, we denote by $\mP(n,m)$ the set of ``down-and-right'' paths in an $(n-1) \times (m-1)$ grid, as in the proof of Proposition~\ref{prop:countfs}.
More formally, an element of $\mP(n,m)$ is a vector in $\{0,1\}^{m+n-2}$ whose entries sum to $m-1$, but we find the graphical representation more convenient for this paper. 
\end{notation}

We include an example that
visualizes how a path in an $(n-1) \times (m-1)$ grid corresponds to an $n \times m$ Ferrers diagram. 

\begin{example} \label{ex:dyck}
Let $n=4$ and $m=6$. The path (in red) in Figure~\ref{F-path} is the sequence $RRDRDRRD$.
     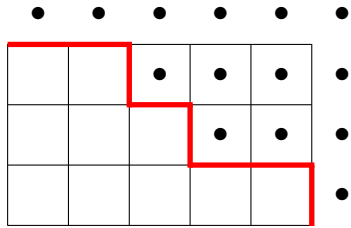
\begin{figure}[ht] 
    \centering
     {\small
     \begin{tikzpicture}[scale=0.8]
     \draw (0,0) -- (5,0) -- (5,3) -- (0,3) -- (0,0);
     \draw (0,1) -- (5,1); 
     \draw (0,2) -- (5,2);
     \draw (1,0) -- (1,3);
     \draw (2,0) -- (2,3);
     \draw (3,0) -- (3,3);
     \draw (4,0) -- (4,3);
     \draw[red, line width=2pt] (0,3) -- (2.043,3);
     \draw[red, line width=2pt] (2,3) -- (2,2-0.043);
     \draw[red, line width=2pt] (2,2) -- (3+0.043,2);
     \draw[red, line width=2pt] (3,2) -- (3,1-0.043);
     \draw[red, line width=2pt] (3,1) -- (5+0.043,1);
     \draw[red, line width=2pt] (5,1) -- (5,0-0.01);
     
     \draw (0.5,3.5) node (b1) [label=center:{\large$\bullet$}] {};
     \draw (1.5,3.5) node (b1) [label=center:{\large$\bullet$}] {};
     \draw (2.5,3.5) node (b1) [label=center:{\large$\bullet$}] {};
     \draw (3.5,3.5) node (b1) [label=center:{\large$\bullet$}] {};
     \draw (4.5,3.5) node (b1) [label=center:{\large$\bullet$}] {};
     \draw (5.5,3.5) node (b1) [label=center:{\large$\bullet$}] {};
     
     \draw (2.5,2.5) node (b1) [label=center:{\large$\bullet$}] {};
     \draw (3.5,2.5) node (b1) [label=center:{\large$\bullet$}] {};
     \draw (4.5,2.5) node (b1) [label=center:{\large$\bullet$}] {};
     \draw (5.5,2.5) node (b1) [label=center:{\large$\bullet$}] {};
     
     \draw (3.5,1.5) node (b1) [label=center:{\large$\bullet$}] {};
     \draw (4.5,1.5) node (b1) [label=center:{\large$\bullet$}] {};
     \draw (5.5,1.5) node (b1) [label=center:{\large$\bullet$}] {};
     
     \draw (5.5,0.5) node (b1) [label=center:{\large$\bullet$}] {};
     
     \end{tikzpicture}
     }
     \caption{Path in $\mP(4,6)$ and its corresponding Ferrers diagram.}
     \label{F-path}
     \end{figure}
The corresponding $4 \times 6$ Ferrers diagram is $\mF=[1,1,2,3,3,4]$.
\end{example}

\begin{remark} \label{rem:wlog}
Define the bijection
\begin{align*}
    T_{n,m}: \{ n \times m \mbox{ Ferrers diagrams $\mF$}\} \rightarrow \{m \times n \mbox{ Ferrers diagrams $\mF$}\},
\end{align*}
where $T_{n,m}(\mF) = \{(m+1-j,n+1-i) : (i,j) \in \mF\} \subseteq [m] \times [n]$ for all $n \times m$ Ferrers diagrams~$\mF$. Note that $T_{n,m}$ is simply the transposition of Ferrers diagrams. 
It is straightforward to check that $T_{n,m}$ induces a bijection between the
MDS-constructible pairs $(\mF,d)$, where 
$\mF$ is an $n \times m$ Ferrers diagram and $1 \le d \le \min\{n,m\}$,
and the MDS-constructible pairs $(\mF,d)$, where 
$\mF$ is an $m \times n$ Ferrers diagram and $1 \le d \le \min\{n,m\}$.
In particular, for all $1 \le d \le \min\{n,m\}$, counting the number of $n \times m$ Ferrers diagrams such that $(\mF,d)$ is MDS-constructible is equivalent to counting the number of $m \times n$ Ferrers diagrams $\mF$ such that $(\mF,d)$ is MDS-constructible. 
This shows that the assumption $m \ge n$, which we include in the statements of this section to simplify the notation and the proofs, is not restrictive.
\end{remark}

We will establish a connection between MDS-constructible pairs and a special class of paths defined as follows.

\begin{definition}
A path $P=s_1, \dots, s_{m+n-2}$ in $\mP(n,m)$ is called a \textbf{generalized Dyck path} if,
for all $\ell \in \{1,\dots,m+n-2\}$,
\begin{align*}
    |\{i \in [\ell] \, : \,  s_i = R \}| \ge |\{i \in [\ell] \, : \,  s_i =D \}|.
\end{align*}
\end{definition}

Note that the path depicted in Figure~\ref{F-path} is a generalized Dyck path. The notion of {Dyck paths} is well-known and it corresponds to the case $n=m$ in our definition of generalized Dyck paths.

We start by analysing the case where $d=2$. Note that for any $n \times m$ Ferrers diagram~$\mF$ with~$m \ge n$, the value of $\kappa(\mF,2)$ can always be obtained by counting the number of dots left in~$\mF$ after deleting the topmost row. In particular, if $|D_i \cap \mF| \ge 1$ for some $i> m$, then $$\sum_{i=1}^{m+n-1} \max \{0,|D_i \cap \mF|-1\} > |\mF|-m = \kappa(\mF,2),$$ meaning that $(\mF,2)$ is not MDS-constructible. From this observation (and its converse) we obtain the following simple  result.

\begin{proposition} \label{lem:charmds2}
Let $\mF$ be an $n \times m$ Ferrers diagrams with $m \ge n \ge 2$. Then $(\mF,2)$ is MDS-constructible if and only if $|D_i \cap \mF|=0$ for all $i >m$.
\end{proposition}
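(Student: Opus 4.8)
The plan is to rewrite MDS-constructibility of the pair $(\mF,2)$ as a statement about how many of the diagonals $D_1,\dots,D_{m+n-1}$ are met by $\mF$, and then to observe that the first $m$ diagonals are automatically met, so that the count exceeds $m$ exactly when some diagonal of index greater than $m$ is met.

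First I would pin down $\kappa(\mF,2)$. Since $m\ge n$, the minimum defining $\kappa(\mF,2)$ in Notation~\ref{not:kappa} runs only over $j\in\{0,1\}$: the value $\kappa_0(\mF,2)=|\mF|-n$ is obtained by deleting the rightmost column of $\mF$, which has $n$ dots because $(n,m)\in\mF$ and $\mF$ is top-aligned, and $\kappa_1(\mF,2)=|\mF|-m$ is obtained by deleting the topmost row of $\mF$, which has $m$ dots because $(1,1)\in\mF$ and $\mF$ is right-aligned. As $m\ge n$, this yields $\kappa(\mF,2)=|\mF|-m$, which is precisely the observation recorded just before the statement of the proposition.

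Next I would simplify the right-hand side of the MDS-constructibility identity in Definition~\ref{def:updates}. Because the diagonals $D_1,\dots,D_{m+n-1}$ partition $[n]\times[m]$, one has $\sum_{i=1}^{m+n-1}|D_i\cap\mF|=|\mF|$; letting $N$ denote the number of indices $i$ with $D_i\cap\mF\ne\emptyset$, it follows that $\sum_{i=1}^{m+n-1}\max\{0,|D_i\cap\mF|-1\}=|\mF|-N$. Combining this with the previous step, $(\mF,2)$ is MDS-constructible if and only if $N=m$.

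Finally I would determine when $N=m$. The crucial point is that the first row of $\mF$ is full, namely $\{(1,1),\dots,(1,m)\}$, and the point $(1,j)$ lies on the diagonal $D_{m-j+1}$; as $j$ runs through $\{1,\dots,m\}$ the index $m-j+1$ also runs through $\{1,\dots,m\}$, so each of $D_1,\dots,D_m$ meets $\mF$ and hence $N\ge m$ always. Consequently $N=m$ holds exactly when none of the remaining diagonals $D_i$ with $i>m$ meets $\mF$, which is the claimed characterization. I do not expect any genuine obstacle here; the only things requiring care are that the relevant index in the minimum defining $\kappa(\mF,2)$ is $j=1$ (this is exactly where $m\ge n$ is used) and the index bookkeeping $j\leftrightarrow m-j+1$ relating points of the first row to diagonals.
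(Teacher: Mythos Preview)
Your argument is correct and follows essentially the same route as the paper: the paper's ``proof'' is the short paragraph immediately preceding the proposition, which observes that $\kappa(\mF,2)=|\mF|-m$ via deletion of the top row and then compares this with the diagonal sum. Your version is in fact more careful than the paper's sketch, making explicit the counting identity $\sum_i\max\{0,|D_i\cap\mF|-1\}=|\mF|-N$ and the fact that the full first row forces $N\ge m$; the paper leaves the converse direction to the reader.
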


Note that the Ferrers diagrams described in Proposition~\ref{lem:charmds2} are exactly those diagrams whose corresponding path in $\mP(n,m)$ is a generalized Dyck path. From this characterization we are able to count them.

\begin{theorem} \label{thm:countmds2}
The number of $n \times m$ Ferrers diagrams $\mF$ with $m \ge n \ge 2$ for which the pair $(\mF,2)$ is MDS-constructible is  $$\frac{m-n+1}{m} \,  \binom{m+n-2}{n-1}.$$
In particular, the number of 
$n \times n$ Ferrers diagrams $\mF$ for which $(\mF,2)$ is MDS-constructible is the $(n-1)$th Catalan numbers.
\end{theorem}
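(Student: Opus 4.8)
The plan is to bootstrap off Proposition~\ref{lem:charmds2} and the path bijection of Proposition~\ref{prop:countfs}, reducing the count to a classical enumeration of generalized Dyck paths, which I would then carry out by a reflection argument.

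First I would translate the problem. By Proposition~\ref{lem:charmds2}, for $m \ge n \ge 2$ the pair $(\mF,2)$ is MDS-constructible exactly when $|D_i \cap \mF| = 0$ for all $i > m$. Under the bijection $\mF \leftrightarrow P \in \mP(n,m)$ of Proposition~\ref{prop:countfs}, this condition is equivalent to $P$ being a generalized Dyck path: as already observed just before the statement, the diagonals $D_i$ with $i > m$ being empty says precisely that no prefix of $P$ has more $D$'s than $R$'s. So the theorem is equivalent to counting the words $P = s_1 \cdots s_{m+n-2}$ with exactly $m-1$ letters $R$ and $n-1$ letters $D$ such that every prefix contains at least as many $R$'s as $D$'s.

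Next I would count these by the reflection principle. The total number of such words, dropping the prefix condition, is $\binom{m+n-2}{n-1}$. Call a word \emph{bad} if some prefix has strictly more $D$'s than $R$'s, equivalently some prefix has one more $D$ than $R$. Given a bad word, let $k$ be the least index such that $s_1\cdots s_k$ has $\#D = \#R + 1$ (so $s_k = D$), and reflect this prefix by swapping every $R$ with a $D$ and vice versa. Since the prefix had one more $D$ than $R$, the new word has $m$ letters $R$ and $n-2$ letters $D$; a routine check (the smallest prefix of the image with one more $R$ than $D$ is again the first $k$ letters, whose reflection inverts the operation) shows this is a bijection from bad words onto all words with $m$ letters $R$ and $n-2$ letters $D$. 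Hence the number of bad words is $\binom{m+n-2}{n-2}$, and the number of generalized Dyck paths is
\[
\binom{m+n-2}{n-1} - \binom{m+n-2}{n-2} = \left(1 - \frac{n-1}{m}\right)\binom{m+n-2}{n-1} = \frac{m-n+1}{m}\binom{m+n-2}{n-1},
\]
where I used $\binom{m+n-2}{n-2} = \frac{n-1}{m}\binom{m+n-2}{n-1}$. Setting $n = m$ gives $\frac{1}{m}\binom{2m-2}{m-1}$, which is the $(m-1)$th Catalan number, yielding the final assertion.

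I do not expect a genuine obstacle: all the combinatorial substance is already packaged in Proposition~\ref{lem:charmds2}, and what remains is a textbook ballot-type computation. The only place deserving a few lines of care is verifying that the reflection at the \emph{shortest} offending prefix is well defined and gives a genuine bijection onto words with letter counts $(m,\,n-2)$; I would state this explicitly but it is entirely routine. (One could equally invoke the cycle lemma, but the reflection argument seems the cleanest to write self-contained.)
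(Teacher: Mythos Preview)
Your proposal is correct and follows the same reduction as the paper: via Proposition~\ref{lem:charmds2} and the path bijection, the count becomes the number of generalized Dyck paths in $\mP(n,m)$. The paper stops there and cites~\cite{carlitz1964two} for the closed formula, whereas you supply a self-contained reflection (ballot) argument; your bijection with words having $m$ letters $R$ and $n-2$ letters $D$ is standard and your arithmetic checks out.
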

\begin{proof}
Counting the the number of generalized 
Dyck paths is a classical problem in enumerative combinatorics; see the introduction of~\cite{carlitz1964two} for a closed formula.
\end{proof}

By Theorem~\ref{thm:countmds2} we know that the proportion of $n \times m$ Ferrers diagrams $\mF$ with $m \ge n$ for which the pair $(\mF,2)$ is MDS-constructible within the set of all $n \times m$ Ferrers diagrams is $({m-n+1})/{m}$. In particular, for fixed $n$ and large $m$, almost every Ferrers diagram $\mF$ has the property that~$(\mF,2)$ is MDS-constructible.

In the remainder of this section we concentrate on $n \times n$ Ferrers diagrams (square case).
We start by observing that 
if $\mF$ is an $n \times n$ Ferrers diagram such that $(\mF,2)$ is MDS-constructible,
then Proposition~\ref{lem:charmds2} implies that $\kappa(\mF,3)=\kappa_1(\mF,3)=|\mF|-2n+1$. In particular, it is easy to see that $\sum_{i=1}^{n}\max\{0,|D_i \cap \mF|-2\} = \sum_{i=2}^{n}\max\{0,|D_i \cap \mF|-2\}=|\mF|-2n+1$, by the definition of a Ferrers diagram. This yields the following result.

\begin{corollary} \label{cor:mdsconstrchain}
If $\mF$ is an $n \times n$ Ferrers diagram such that $(\mF,2)$ is MDS-constructible, then also $(\mF,3)$ is MDS-constructible. 
\end{corollary}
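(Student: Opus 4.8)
The plan is to leverage exactly the chain of identities sketched in the paragraph preceding the statement, being careful to verify each step from the definition of a Ferrers diagram. First I would recall that, by Definition \ref{def:updates}, the pair $(\mF,3)$ is MDS-constructible precisely when $\kappa(\mF,3) = \sum_{i=1}^{2n-1} \max\{0,|D_i \cap \mF|-2\}$; since $\mF$ sits in an $n \times n$ board we have $m = n$, so the diagonals range over $1 \le i \le 2n-1$. The hypothesis that $(\mF,2)$ is MDS-constructible gives, via Proposition \ref{lem:charmds2}, that $|D_i \cap \mF| = 0$ for all $i > n$, so all diagonals beyond the $n$th are empty; this immediately truncates every sum over diagonals to the range $1 \le i \le n$.

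Next I would compute $\kappa(\mF,3)$ directly. Because all diagonals past the $n$th are empty, deleting the topmost row and then deleting one of the two leftmost columns (or the two topmost rows) behaves predictably; concretely I claim $\kappa_1(\mF,3) = |\mF| - (2n-1)$: removing the top row deletes $n$ dots, and removing the rightmost remaining column deletes the remaining entries of that column, whose count I can pin down using that $D_{n+1} \cap \mF = \emptyset$ forces $c_1 = 1$ (indeed $(n,1)$ would lie on $D_{n+1}$). One then checks $\kappa_1(\mF,3) \le \kappa_0(\mF,3)$ and $\kappa_1(\mF,3)\le \kappa_2(\mF,3)$ so that $\kappa(\mF,3) = |\mF| - 2n + 1$; alternatively, and perhaps more cleanly, I would invoke Proposition \ref{thm:mdsbound} together with the computation below to squeeze $\kappa(\mF,3)$ between two equal quantities. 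Then I would evaluate $\sum_{i=1}^{2n-1}\max\{0,|D_i\cap\mF|-2\} = \sum_{i=2}^{n}\max\{0,|D_i\cap\mF|-2\}$, where the lower index jumps to $2$ because $|D_1 \cap \mF| = 1 \le 2$, and show this equals $|\mF| - 2n + 1$: each diagonal $D_i$ with $2 \le i \le n$ contributes $|D_i \cap \mF| - 2$ once $|D_i \cap \mF| \ge 2$, and one must argue that $|D_i\cap\mF|\ge 2$ whenever that diagonal is nonempty and $2\le i$, which follows from $\kappa(\mF,2)\ge 1$ (hence Claim \ref{cl:1}-type reasoning) or directly from the Ferrers shape once the tail diagonals are known to vanish. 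Summing, $\sum_{i=2}^n |D_i\cap\mF| = |\mF| - |D_1\cap\mF| = |\mF| - 1$ and we subtract $2$ for each of the $n-1$ diagonals $D_2,\dots,D_n$, giving $|\mF| - 1 - 2(n-1) = |\mF| - 2n + 1$.

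Putting the two computations side by side yields $\kappa(\mF,3) = |\mF| - 2n + 1 = \sum_{i=1}^{2n-1}\max\{0,|D_i\cap\mF|-2\}$, which is exactly the MDS-constructibility of $(\mF,3)$ by Definition \ref{def:updates}, and we are done. The main obstacle I anticipate is the bookkeeping in the $\kappa_j$ computation: one must make sure that $j=1$ really realizes the minimum in \eqref{min_b} for $d=3$, and that the counts of deleted dots are correct given the constraints $c_1 = 1$ and $|D_i \cap \mF| = 0$ for $i > n$. A clean way to sidestep the minimization entirely is to note that Proposition \ref{thm:mdsbound} already gives $\kappa(\mF,3) \ge \sum_{i=1}^{2n-1}\max\{0,|D_i\cap\mF|-2\} = |\mF|-2n+1$, so it suffices to exhibit the single value $\kappa_1(\mF,3) = |\mF| - 2n + 1$ to force equality — reducing the whole argument to one explicit dot count plus the diagonal sum above.
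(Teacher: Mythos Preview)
Your proposal is correct and follows essentially the same route as the paper: use Proposition~\ref{lem:charmds2} to kill the diagonals beyond $D_n$, compute $\kappa_1(\mF,3)=|\mF|-2n+1$, and match it with the diagonal sum $\sum_{i=2}^n(|D_i\cap\mF|-2)=|\mF|-2n+1$; your suggestion to invoke Proposition~\ref{thm:mdsbound} to avoid checking the other $\kappa_j$'s is a clean shortcut. One small slip: the aside about $c_1=1$ is irrelevant to the computation of $\kappa_1(\mF,3)$, since the column being removed there is the \emph{rightmost} one, which has $c_n=n$ dots automatically (hence $n+(n-1)=2n-1$ deleted); the observation $c_1=1$ would only enter if you were computing $\kappa_0$ or $\kappa_2$ directly, which your Proposition~\ref{thm:mdsbound} trick lets you skip anyway.
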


\begin{remark} \label{rem:failures}
\begin{enumerate}
    \item If we consider the non-square case ($n \le m-1$), then  Corollary~\ref{cor:mdsconstrchain} is not necessarily true. As a counterexample, consider the $3 \times 4$ Ferrers diagram $\mF=[1,1,3,3]$. Then $\kappa(\mF,2) = \sum_{i=1}^{6} \max\{|D_i \cap \mF|-1,0\}=4$, i.e., the pair $(\mF,2)$ is MDS-constructible. On the other hand, $\kappa(\mF,3) = 2 \ne 1 = \sum_{i=1}^{6} \max\{|D_i \cap \mF|-2,0\}$, meaning that $(\mF,3)$ is not MDS-constructible.
    \item Corollary~\ref{cor:mdsconstrchain} does not extend to larger values of $d$ in an obvious way. Consider e.g. $\mF=[1,1,3,3,5]$. It is easy to see that $(\mF,3)$ is MDS-constructible. However, we have $\kappa(\mF,4)=2 \ne 1 =\sum_{i=1}^9 \max\{0,|D_i \cap \mF|-3\}$, implying that the pair $(\mF,4)$ is not MDS-constructible. 
\end{enumerate}
\end{remark}

We can now count the number of square
Ferrers diagrams $\mF$ for which the pair $(\mF,3)$ is MDS-constructible.

\begin{corollary} \label{cor:mds3}
Let $n \ge 3$. The number of $n \times n$ Ferrers diagrams $\mF$ for which $(\mF,3)$ is MDS-constructible is
\begin{align*}
    \frac{1}{n}\binom{2n-2}{n-1} + \frac{2}{n-1}\binom{2n-4}{n-2}.
\end{align*}
\end{corollary}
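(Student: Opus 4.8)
The plan is to partition the $n\times n$ Ferrers diagrams $\mF$ with $(\mF,3)$ MDS-constructible according to whether or not $(\mF,2)$ is \emph{also} MDS-constructible. By Proposition~\ref{lem:charmds2} (together with the Ferrers property), the diagrams with $(\mF,2)$ MDS-constructible are exactly those $\mF=[c_1,\dots,c_n]$ with $c_j\le j$ for all $j$; by Theorem~\ref{thm:countmds2} there are $\frac1n\binom{2n-2}{n-1}$ of them, i.e.\ the $(n-1)$th Catalan number $C_{n-1}$, and by Corollary~\ref{cor:mdsconstrchain} each of them has $(\mF,3)$ MDS-constructible. It therefore remains to count the diagrams $\mF$ for which $(\mF,3)$ is MDS-constructible but $(\mF,2)$ is not, and to show there are $2C_{n-2}=\frac{2}{n-1}\binom{2n-4}{n-2}$ of them.

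Next I would obtain an explicit description of this second family by comparing $\kappa(\mF,3)$ with $\sum_{i=1}^{2n-1}\max\{0,|D_i\cap\mF|-2\}$ (Definition~\ref{def:updates}). Put $K=|\mF|-2n+1$. Using $c_n=n$, a short computation yields $\kappa_1(\mF,3)=K$, $\kappa_0(\mF,3)-\kappa_1(\mF,3)=(n-1)-c_{n-1}$, and $\kappa_2(\mF,3)-\kappa_1(\mF,3)=z-1$ with $z=|\{t:c_t=1\}|$, so that
\[
\kappa(\mF,3)=K+\min\{0,\ (n-1)-c_{n-1},\ z-1\}.
\]
On the other hand, each diagonal $D_i$ with $2\le i\le n$ contains the two distinct points $(1,n-i+1)$ and $(i,n)$ of $\mF$, while $|D_1\cap\mF|=1$; hence $\sum_{i=1}^{2n-1}\max\{0,|D_i\cap\mF|-2\}=|\mF|-\sum_{i=1}^{2n-1}\min\{2,|D_i\cap\mF|\}=|\mF|-(2n-1)-S=K-S$, where $S:=\sum_{i>n}\min\{2,|D_i\cap\mF|\}\ge 0$. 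Thus $(\mF,3)$ is MDS-constructible if and only if $\min\{0,(n-1)-c_{n-1},z-1\}=-S$. When $(\mF,2)$ is not MDS-constructible we have $S\ge 1$; since $(n-1)-c_{n-1}\ge -1$ and $z-1\ge -1$, this forces $S=1$ and ($c_{n-1}=n$ or $c_1\ge 2$). (The value $\kappa(\mF,3)=0$ occurs only for $\mF=[1,\dots,1,n]$, which belongs to the first family, so Corollary~\ref{cor:main} and Theorem~\ref{th:trai} are freely applicable throughout this second case.)

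Unwinding the condition ``$S=1$'' in terms of the subdiagonal diagonals $D_{n+t}$ — noting $|D_{n+t}\cap\mF|=|\{j\le n-t:c_j\ge j+t\}|$, so that $S=1$ means precisely that $D_{n+1}$ contains exactly one point of $\mF$ and $D_{n+t}$ is empty for $t\ge 2$ — and combining this with ``$c_{n-1}=n$ or $c_1\ge 2$'', one finds that the second family consists exactly of the diagrams of the form $[2,c_2,\dots,c_{n-1},n]$ with $c_j\le j$ for $2\le j\le n-1$, together with those of the form $[c_1,\dots,c_{n-2},n,n]$ with $c_j\le j$ for $1\le j\le n-2$. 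Each of these two shapes is in explicit bijection with the set of $(n-1)\times(n-1)$ Ferrers diagrams $[c_1',\dots,c_{n-1}']$ with $c_j'\le j$ for all $j$ — via $[c_1',\dots,c_{n-1}']\mapsto[2,c_1'+1,\dots,c_{n-2}'+1,n]$ for the first shape and $[c_1',\dots,c_{n-1}']\mapsto[c_1',\dots,c_{n-2}',n,n]$ for the second — and by Theorem~\ref{thm:countmds2} there are $C_{n-2}=\frac{1}{n-1}\binom{2n-4}{n-2}$ of those. The two shapes are disjoint from each other ($c_1=2$ versus $c_1=1$) and from the first family (there $c_{n-1}\le n-1$), so the total count is $C_{n-1}+2C_{n-2}$, which is the claimed formula.

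The step I expect to be most delicate is the equivalence establishing the two explicit shapes: one must verify carefully that $\kappa(\mF,3)$ is governed by $\min\{\kappa_0(\mF,3),\kappa_1(\mF,3),\kappa_2(\mF,3)\}$ exactly as above, and then prove both directions of ``($S=1$ and $c_{n-1}=n$ or $c_1\ge 2$) $\iff$ ($\mF$ has one of the two shapes)'', ruling out in particular mixed configurations such as $c_1\ge 2$ together with $c_{n-1}=n$, which force $S\ge 2$ and hence non-constructibility. The remaining ingredients — the $\kappa$- and $\tau$-identities and the two Catalan bijections — are routine once the bookkeeping is set up.
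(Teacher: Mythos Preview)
Your proof is correct and follows essentially the same strategy as the paper's: split according to whether $(\mF,2)$ is MDS-constructible, reduce the remaining case to $S=1$ together with $c_{n-1}=n$ or $c_1\ge 2$, and count each of the two resulting shapes as $C_{n-2}$. Your version is more explicitly computational (introducing $K$, $S$, $z$ and giving concrete bijections with $(n-1)\times(n-1)$ Dyck-type diagrams), whereas the paper argues more tersely via the index $\ell\in\{0,1,2\}$ at which $\kappa_\ell(\mF,3)$ is minimal and appeals to symmetry for the second shape, but the underlying decomposition and case analysis are the same.
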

\begin{proof}
By Theorem~\ref{thm:countmds2} and  Corollary~\ref{cor:mdsconstrchain}, it suffices to prove that the number of $n \times n$ Ferrers diagrams $\mF$ for which $(\mF,3)$ is MDS-constructible, and $(\mF,2)$ is not MDS-constructible, is
\begin{align*}
    \frac{2}{n-1}\binom{2n-4}{n-2}.
\end{align*}
Fix an arbitrary $n \times n$ Ferrers diagram $\mF$. Let $0 \le \ell \le 2$ be such that
$\kappa(\mF,d) = \kappa_\ell(\mF,d)$, where in the case that there exist more than just one such $\ell$ and one of the possible ones is $\ell=1$, we always set $\ell=1$.
Denote by $\mF_\ell$ the subset of $\mF$ made by those points that are not contained in the topmost $\ell$ rows of $\mF$, nor in its rightmost $d-1-\ell$ columns. By the definition of $\ell$, we have 
\begin{align*}
    \kappa(\mF,3) 
    = \begin{cases}
    |\mF|-2n \quad &\textnormal{if $\ell \in \{0,2\}$},\\
    |\mF|-2n+1 \quad &\textnormal{if $\ell=1$.}
    \end{cases}
\end{align*}
If $\kappa(\mF,3) =  |\mF|-2n+1$, we know that $\kappa(\mF,3)$ can be attained by counting the points of the Ferrers diagram $\mF$ without the topmost row and rightmost column. In particular, in this case $(\mF,3)$ is MDS-constructible if and only if $(\mF,2)$ is MDS-constructible.
Now assume that $\ell =0$. Then $\mF$ has (at least) two columns of length $n$ on the very right. Therefore, $\smash{\sum_{i=1}^{2n-1} \max\{0,|D_i \cap \mF|-2\}=|\mF|-2n}$ if and only if $D_{n+1}\cap \mF=(n,n-1)$ and $|D_i \cap \mF| =0$ for $i > n+1$. Similarly, if $\ell =2$ then $(\mF,3)$ is MDS-constructible if and only if $D_{n+1}\cap \mF=(2,1)$ and $|D_i \cap \mF| =0$ for $i > n+1$. 
\begin{claim} \label{cl:B}
The number of $n \times n$ Ferrers diagrams for which $D_{n+1}\cap \mF=(n,n-1)$ and $|D_i \cap \mF| =0$ for $i > n+1$ is $\smash{\frac{1}{n-1} \binom{2n-4}{n-2}}$.
\end{claim}
\begin{clproof}
These Ferrers diagrams correspond to Dyck paths in $\mP(n-1,n-1)$. The count of these Dyck paths is the formula given in Theorem~\ref{thm:countmds2}, which proves the claim.
\end{clproof}
By symmetry, the formula in Claim~\ref{cl:B} also gives the number of Ferrers diagrams for which \smash{$D_{n+1}\cap \mF=(2,1)$} and $|D_i \cap \mF| =0$ for $i > n+1$, proving the desired result (we are using that $n \ge 3$).
\end{proof}

Finding a formula for the number of 
MDS-constructible pairs $(\mF,d)$ for arbitrary $d$ seems to be a difficult task, where the main hurdle we encountered 
lies in the description of all deletions that attain the minimum in Notation~\ref{not:kappa}.
We leave this to future research.

\bigskip

\bibliographystyle{amsplain}
\bibliography{ourbib}

\end{document}